\documentclass[11pt]{amsart}
\usepackage[T1]{fontenc}
\usepackage{lmodern}
\usepackage{amssymb}
\usepackage{mathtools}
\usepackage{booktabs}
\usepackage{array}
\usepackage{microtype}
\usepackage[colorlinks=true,linkcolor=blue,citecolor=blue,urlcolor=blue]{hyperref}
\hypersetup{
 pdftitle={Coefficientwise Extremality and Rigidity for Independent Sets in Bipartite Graph Covers},
 pdfauthor={Yusuke Watanabe},
 pdfsubject={Fixed-occupation independent-set counts in bipartite graph covers},
 pdfkeywords={independent sets, graph covers, holonomy, treewidth, chordal bipartite}
}

\newtheorem{mainthm}{Theorem}

\newtheorem{theorem}{Theorem}[section]
\newtheorem{proposition}[theorem]{Proposition}
\newtheorem{lemma}[theorem]{Lemma}
\newtheorem{corollary}[theorem]{Corollary}
\theoremstyle{definition}

\newtheorem{example}[theorem]{Example}
\theoremstyle{remark}
\newtheorem{remark}[theorem]{Remark}
\numberwithin{equation}{section}

\newcommand{\Pa}{\mathsf P}
\newcommand{\Za}{\mathsf Z}
\newcommand{\Da}{\mathsf D}
\newcommand{\Ra}{\mathsf R}
\newcommand{\occ}{\mathbf a}
\newcommand{\one}{\mathbf 1}
\newcommand{\ind}[1]{[#1]}
\newcommand{\Om}{\Omega}
\newcommand{\NN}{\mathcal N}
\newcommand{\CC}{\mathcal C}
\newcommand{\id}{\mathrm{id}}
\newcommand{\tw}{\operatorname{tw}}
\newcommand{\Fix}{\operatorname{Fix}}
\newcommand{\tr}{\operatorname{tr}}
\newcommand{\hol}{\operatorname{hol}}
\newcommand{\coefge}{\succeq_{\mathrm{coeff}}}
\newcommand{\coefle}{\preceq_{\mathrm{coeff}}}
\newcommand{\xx}{\mathbf x}

\title[Independent sets in bipartite graph covers]
{Coefficientwise Extremality\\
and Rigidity for Independent Sets\\
in Bipartite Graph Covers}
\author{Yusuke Watanabe}
\email{ywatanabex@gmail.com}
\date{October 10, 2026}
\subjclass[2020]{Primary 05C31; Secondary 05C69, 05C15, 05C76}
\keywords{Independent sets, graph covers, holonomy, treewidth, chordal bipartite graphs, correspondence coloring}

\begin{document}
\begin{abstract}
Let \(G\) be a finite bipartite graph and let \(\widetilde G\) be an
\(M\)-fold cover. We compare independent sets of \(\widetilde G\) with those
of the trivial cover, consisting of \(M\) disjoint copies of \(G\), after
prescribing how many vertices are selected above each vertex of \(G\).
A conjecture from 2011, motivated by the Bethe approximation, asserts
that the trivial cover maximizes every such count.

We prove this for every feasible occupation profile whose active graph
has treewidth at most two or is chordal bipartite. Equality holds exactly
when the restricted cover is flat, meaning that fibre relabelings make it
trivial. For \(M\ge2\) and positive feasible occupations, a nonflat cover
of a connected bipartite graph of treewidth at most two has a deficit
of at least twice the coefficient of \(p_G^{M-2}\) with every
occupation reduced by one. A holonomy-dependent bound improves the factor
to three for odd-order holonomy; both uniform constants are sharp.
Comparison and rigidity survive finite compositions along an edge or a
vertex, and quantitative bounds from one piece propagate to the whole
graph, giving results beyond the two original classes. On any bipartite
base, the first nonzero homogeneous defect is determined by the shortest
cycles with nontrivial holonomy.

At singleton occupations, our counts are numbers of DP-colorings of full
covers. The dual DP color function on these classes and their finite seam
compositions equals the chromatic polynomial, with equality attained
precisely by canonical labelings. Dong's chromatic-polynomial inequality
then yields the Dual-DP Shameful inequality on the same graph class.
The main theorems have been formalized in Lean~4.
\end{abstract}
\maketitle

\section{Introduction}\label{sec:intro}

\subsection{The question}
Let \(G=(L,R,E)\) be a finite simple bipartite graph and let \(M\ge1\).
An \emph{\(M\)-cover} \(\widetilde G\) of \(G\), also called an
\emph{\(M\)-fold cover}, has vertex set
\(V(G)\times[M]\) and, for every edge \(uv\in E\), a perfect matching
between the fibres \(\{u\}\times[M]\) and \(\{v\}\times[M]\). We record
the matching above \(uv\) by a permutation \(\alpha_{uv}\in S_M\), joining
\((u,i)\) to \((v,\alpha_{uv}(i))\), with \(\alpha_{vu}=\alpha_{uv}^{-1}\);
this is the permutation-voltage description of covers \cite{GrossTucker}.
We also call \(\alpha_{uv}\) the \emph{edge transport}.
The simplest cover is the \emph{trivial} one, in which every
\(\alpha_{uv}\) is the identity and \(\widetilde G\) is \(M\) disjoint
copies of \(G\).

Every independent set of \(\widetilde G\) projects to \(G\), and the
projection records, above each vertex \(v\), how many of the \(M\) lifts of
\(v\) were selected. We call this number the \emph{occupation} of \(v\),
and we ask the following question: for a fixed occupation at every vertex,
which cover has the most independent sets? To make this precise, let
\[
 p_G(\xx)=\sum_{I\subseteq V(G)\text{ independent}}\ \prod_{v\in I}x_v
\]
be the multivariate independent-set polynomial of \(G\), and let
\(\Pi_{\widetilde G}(\xx)\) be the independent-set polynomial of
\(\widetilde G\) after the substitution \(x_{(v,i)}=x_v\). The trivial
cover has polynomial \(p_G^M\). For a vector of nonnegative integers
\(\occ=(a_v)_{v\in V(G)}\) we write
\begin{equation}\label{eq:counts}
 \Pa_G(\occ)=[\xx^\occ]\,p_G^M,\qquad
 \Za_{\widetilde G}(\occ)=[\xx^\occ]\,\Pi_{\widetilde G},\qquad
 \Da_{\widetilde G}(\occ)=\Pa_G(\occ)-\Za_{\widetilde G}(\occ),
\end{equation}
for the number of independent sets with occupation profile \(\occ\) in the
trivial cover, the same number in \(\widetilde G\), and their difference,
which we call the \emph{deficit}. We call \(\Pa\) the \emph{ordinary count}
and \(\Za\) the \emph{gained count}. Thus the comparison is made at the level
of the whole vector of occupations, not merely the total size of an
independent set.

The conjecture of \cite[Conjecture~1, equation~(4)]{Watanabe}, proposed
in 2011, asserts that the trivial cover always wins:
\[
 \Pi_{\widetilde G}\ \coefle\ p_G^M
 \qquad\text{for every finite bipartite \(G\) and every cover
 \(\widetilde G\),}
\]
where \(\coefle\) denotes coefficientwise inequality. A single twisted edge
already shows what is at stake. Take \(G=C_4\), \(M=3\), the identity on
three edges and the transposition \((1\,2)\) on the fourth. For the
all-ones profile \(\occ=\one\), also called the \emph{singleton profile},
\(\Pa=18\) and \(\Za=16\); for the profile that selects
two lifts above two opposite vertices and one above the other two,
\(\Pa=3\) and \(\Za=1\). In both cases the twist costs exactly two
independent sets, and the second case illustrates multiple selections
within a fibre (Example~\ref{ex:square}).

Two communities have reasons to care about this question. In statistical
physics and coding theory, Vontobel characterized the Bethe partition
function by a limit superior of \(M\)-th roots of average partition
functions over \(M\)-covers \cite[Definition~32 and Theorem~33]{Vontobel}, so that a
comparison valid for every cover yields a bound on the Bethe approximation;
the conjecture above was proposed as a coefficientwise strengthening of such
a bound. In graph coloring, when every occupation equals one, the count
\(\Za_{\widetilde G}(\one)\) is the number of DP-colorings (correspondence
colorings) of \(G\) with respect to the full cover obtained from
\(\widetilde G\), and \(\Pa_G(\one)\) is the chromatic polynomial
\(P(G,M)\). The conjecture therefore contains the statement that no full
cover of a bipartite graph admits more DP-colorings than the chromatic
polynomial predicts, a question studied under the name \emph{dual DP color
function} \cite{Mudrock,KMS,ThetaDP}. We return to both connections in
Sections~\ref{sec:intro-dp} and~\ref{sec:intro-context}.

The conjecture is open in general. This paper proves it, with a
characterization of equality and a sharp quantitative version, on two
structural classes of graphs, and shows that these conclusions are stable
under gluing.

\subsection{Main results}\label{sec:intro-main}
A profile \(\occ\) is \emph{feasible} if
\begin{equation}\label{eq:feasible}
 0\le a_v\le M\quad(v\in V(G)),\qquad a_u+a_v\le M\quad(uv\in E);
\end{equation}
these are exactly the profiles with \(\Pa_G(\occ)>0\)
(Lemma~\ref{lem:counting}). The \emph{active graph} of \(\occ\) is the
induced subgraph \(G_\occ=G[\{v:a_v>0\}]\); vertices of occupation zero
impose no constraint, so only the cover restricted to \(G_\occ\) matters.
We call \(\widetilde G|_{G_\occ}\) the \emph{active cover}.

Vertexwise relabelings do not change the cover up to isomorphism.
A cover is \emph{flat} if such relabelings make every edge permutation
the identity, equivalently if the composite permutation around every
cycle, its \emph{holonomy}, is the identity.
In the language of gain graphs, relabeling is \emph{switching} and flatness
is \emph{balance} \cite[Section~5]{Zaslavsky}; in the language of
DP-coloring, a flat cover is one admitting a \emph{canonical labeling}:
each fibre is labeled by \([M]\) so that every edge matching joins equal
labels at its endpoints.
A flat cover is isomorphic to the trivial one, so it has
\(\Za_{\widetilde G}=\Pa_G\) at every profile. Our first theorem says that
on two classes of graphs the converse holds as well: the trivial cover is
the unique maximizer on the active graph, up to switching.

Recall that a bipartite graph is \emph{chordal bipartite} if it has no
induced cycle of length at least six, and that treewidth is defined through
tree decompositions (Section~\ref{sec:networks}). Throughout,
``width two'' means treewidth \emph{at most} two.

\begin{mainthm}[Structural extremality and rigidity]\label{thm:A}
Let \(M\ge1\), let \(\widetilde G\) be an \(M\)-cover of a finite simple
bipartite graph, and let \(\occ\) be feasible. If \(G_\occ\) has treewidth
at most two or is chordal bipartite, then
\[
 \Za_{\widetilde G}(\occ)\le\Pa_G(\occ),
 \qquad
 \Za_{\widetilde G}(\occ)=\Pa_G(\occ)
 \ \Longleftrightarrow\
 \widetilde G|_{G_\occ}\text{ is flat}.
\]
In particular either condition on \(G\) implies
\(\Pi_{\widetilde G}\coefle p_G^M\).
\end{mainthm}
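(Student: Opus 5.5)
The plan is to rewrite \(\Za_{\widetilde G}(\occ)\) as a contraction of kernels on the Johnson scheme and then eliminate vertices along the structure of \(G_\occ\). Vertices with \(a_v=0\) contribute a trivial factor, so we may replace \(G\) by \(G_\occ\) from the start. An independent set with profile \(\occ\) is a choice of subsets \(S_v\subseteq[M]\) with \(|S_v|=a_v\) and \(\alpha_{uv}(S_u)\cap S_v=\emptyset\) for every edge. Hence
\[
\Za_{\widetilde G}(\occ)=\sum_{(S_v)}\prod_{uv\in E}\ind{\alpha_{uv}(S_u)\cap S_v=\emptyset},
\]
and \(\Pa_G(\occ)\) is the same sum with all \(\alpha_{uv}=\id\). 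The disjointness indicator is a function of \(|\alpha_{uv}(S_u)\cap S_v|\). We therefore work throughout with the class \(\mathcal K\) of edge kernels of the form \(g(|\pi(S)\cap T|)\), where \(\pi\in S_M\) is a transport and \(g\ge0\). We then show that the relevant operations keep us inside a subclass of \(\mathcal K\) in which the kernels are nondecreasing in the intersection size.

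The elimination steps run as follows. A pendant vertex sums out to a constant \(\binom{M-a_u}{a_v}\), independent of the twist. A degree-two vertex \(v\) with neighbours \(u,w\) sums out to
\[
\binom{M-|\alpha_{uv}^{-1}\ldots|}{a_v}
=\binom{M-a_u-a_w+|\,\alpha_{vu}\!\cdot\!S_u\cap \alpha_{vw}\!\cdot\!S_w|}{a_v},
\]
which is a kernel in \(\mathcal K\) with transport equal to the path transport and with \(g\) nondecreasing. Parallel edges multiply kernels. A product of nondecreasing \(g\)'s is nondecreasing provided both kernels are expressed through the same transport. When the transports differ, the difference is a holonomy, and we keep both factors. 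Since a graph of treewidth at most two reduces to a single vertex by deleting degree-\(\le1\) vertices, suppressing degree-2 vertices and merging parallel edges, we end with a sum \(\sum_{|S|=a}\prod_j g_j(|S\cap\pi_j S|)\), where the \(\pi_j\) are holonomies of cycles. Each factor is maximized termwise at \(\pi_j=\id\), because \(|S\cap\pi_jS|\le|S|\) and the \(g_j\) are nondecreasing. This gives \(\Za\le\Pa\).

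For chordal bipartite graphs I would instead use a perfect edge-without-vertex elimination ordering, that is, a bisimplicial edge. First flatten the cover on a spanning tree by switching. Then exploit the fact that the neighbourhood of a bisimplicial edge \(xy\) induces a complete bipartite graph. Every cycle through \(xy\) is then generated by \(4\)-cycles, so one can argue on \(4\)-cycles alone and remove \(xy\) inductively.

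The main obstacle is the series step in the general situation. After earlier reductions the kernels on \(u\!-\!v\) and \(v\!-\!w\) are arbitrary nondecreasing \(g,h\), not merely indicators. I need
\[
\sum_{|S|=a_v} g(|A\cap S|)\,h(|S\cap \pi B|)
\]
to be a nondecreasing function of \(|A\cap\pi B|\), and ideally strictly increasing whenever nonconstant. I would try to prove this first by a coupling/rearrangement argument. Increasing \(|A\cap\pi B|\) by one means swapping a single element. A transposition acting on \(S\) gives an injection that pairs configurations and can only increase both intersections simultaneously, in the spirit of a Harris–FKG inequality on the Johnson scheme. If that fails, the fallback is to verify directly that all Johnson-scheme eigenvalues of these kernels have the required sign pattern.

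Rigidity then follows by tracking strictness. Suppose some holonomy \(\pi_j\neq\id\) on a cycle of \(G_\occ\). Since \(\occ\) is feasible and the cycle lies in the active graph, some \(S\) with \(|S\cap\pi_jS|<|S|\) occurs with positive weight in the other factors. Here strict monotonicity of the eliminated kernels, which follows from \(a_v>0\) and feasibility, gives strict inequality. Conversely, flat active covers are switching-equivalent to the trivial cover, so they give equality.
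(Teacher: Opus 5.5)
\textbf{Width two.} Your reduction to \(G_\occ\) is fine, and so is the series step you single out as the main obstacle. It follows from the two-label identity \((\Delta(f*_M g))=(\Delta f)*_{M-2}(\Delta g)\) of Lemma~\ref{lem:serialdiff}, which is essentially your transposition pairing. You do first need inclusion coordinates, obtained by complementing one side, because the raw disjointness indicator is \emph{nonincreasing} in \(|\alpha_{uv}S_u\cap S_v|\).

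The genuine gap is the parallel step. Suppose an edge \(uw\) carries \(h(z(A_u,\sigma A_w))\,g(z(A_u,\tau A_w))\) with \(\sigma\ne\tau\). A later series step through \(u\) or \(w\) sums this product against another kernel. The result depends on the joint position of several transported sets, not on a single overlap.

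Choosing the elimination order cannot avoid this. Replace each edge of a triangle \(u_1u_2u_3\) by two paths of length two, and make every resulting square nonflat. Each \(u_i\) keeps degree at least three until its four middle neighbours are suppressed. So the first \(u_i\) eliminated must be summed against two doubled edges, which couples four transported sets.

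Hence your class \(\mathcal K\) is not closed, and the elimination never arrives at \(\sum_S\prod_j g_j(|S\cap\pi_jS|)\). No termwise comparison is available either, because for distinct vertices \(|\sigma A\cap B|\) and \(|A\cap B|\) are unordered.

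The missing idea is the diagonal-exact split of Lemma~\ref{lem:split}. Put \(x=z(A_u,\sigma A_w)\) and \(y=z(A_u,\tau A_w)\), and replace \(h(x)g(y)\) by \(F(x)+G(y)\), where \(F(0)=h(0)g(0)\), \(G(0)=0\), \(\Delta F(k)=\Delta h(k)g(k+1)\), and \(\Delta G(k)=h(k)\Delta g(k)\). Monotonicity gives \(h(x)g(y)\le F(x)+G(y)\). The discrete product rule gives equality at \(x=y\), which is the only case the identity network ever sees. So the gained value can only increase, the ordinary value is unchanged, and the network remains a list of single-transport monotone kernels.

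Because this step is only an upper bound, rigidity cannot come from ``strict monotonicity.'' That claim is false anyway: for \(r\ge2\) the edge kernel \(\binom zr\) vanishes on \(0,\dots,r-1\). The paper instead keeps a nonflat split branch. When both branches are flat, it computes exactly the loss across the resulting bridge, which carries two distinct transports (Lemma~\ref{lem:local} inside Theorem~\ref{thm:aux}).

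\textbf{Chordal bipartite.} Here the proposal has no comparison mechanism. Deleting a bisimplicial edge \(xy\) removes a constraint from both models. You give no reason why the gained model should lose at least as many assignments as the ordinary one. The fact that \(4\)-cycles generate the cycles bears on when the cover is flat, not on how many assignments survive.

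The paper instead eliminates columns along a beta (nest-point) order (Lemma~\ref{lem:rootlast}). Summing out a column whose incident row neighbourhoods are nested is bounded by simultaneous two-label compression (Lemma~\ref{lem:compression}). This is done over the cone of products of nonnegative nondecreasing functions of row-intersection sizes. The step is exact in the identity model precisely because the relevant intersections are nested (Proposition~\ref{prop:conecompare}).

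Rigidity in this class needs its own argument, which your sketch does not supply. The paper saturates one side with a nonnegative remainder, contracts a tight star, and shows that a nonflat omitted parallel constraint rejects some ordinary assignment (Lemmas~\ref{lem:saturation}, \ref{lem:star}, \ref{lem:parallel}).
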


The proof uses only the active graph, since zero occupations impose no
constraints. Both classes are hereditary, so either hypothesis on \(G\)
implies the same hypothesis on its active graph. The classes are
incomparable: \(C_6\) has
treewidth two but is not chordal bipartite, whereas \(K_{3,3}\) is chordal
bipartite and has treewidth three.

The second theorem measures how much a twist costs. The natural unit is
the corresponding coefficient on a cover with two fewer sheets. For \(M\ge2\) define
the \emph{reduced reserve}
\begin{equation}\label{eq:reserve}
 \Ra_G(\occ)=[\xx^{\occ-\one}]\,p_G^{M-2},
\end{equation}
the number of independent sets of the trivial \((M-2)\)-cover in which
every occupation has been lowered by one; a negative exponent has
coefficient zero and \(p_G^0=1\). The constant in front of it depends
only on the holonomy of the cover. For a nontrivial subgroup
\(\Gamma\le S_M\) let \(m_k(\gamma)\) be the number of \(k\)-subsets of
\([M]\) that \(\gamma\) does not fix, and put
\begin{equation}\label{eq:kappa}
 \kappa(\Gamma)=
 \min_{\substack{\gamma\in\Gamma\setminus\{1\}\\
                 1\le k\le\lfloor M/2\rfloor}}
 \frac{m_k(\gamma)}{\binom{M-2}{k-1}}.
\end{equation}
The \emph{holonomy image} of a connected cover is the subgroup of \(S_M\)
generated by the edge permutations after a spanning tree has been
relabeled to the identity; it is determined up to conjugation, hence so is
\(\kappa\). Note that \(\kappa\) depends on the permutation action on
\([M]\), not only on the abstract group.

\begin{mainthm}[Sharp deficit on treewidth two]\label{thm:B}
Suppose \(G\) is connected and bipartite, has an edge, and
\(\tw(G)\le2\). Let \(M\ge2\), let \(\widetilde G\) be nonflat with
holonomy image \(\Gamma\), and let \(\occ\) be positive and feasible. Then
\[
 \Da_{\widetilde G}(\occ)\ge\kappa(\Gamma)\Ra_G(\occ)>0.
\]
Always \(\kappa(\Gamma)\ge2\). If \(|\Gamma|\) is odd, then
\(\kappa(\Gamma)\ge3\). The uniform constants two and three are sharp.
\end{mainthm}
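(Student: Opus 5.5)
We prove Theorem~\ref{thm:B} by induction along the tree decomposition, reducing the deficit to a single local comparison on one cycle and then optimizing over the cycle's holonomy. The plan has four steps.

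\emph{Step 1: reduction to the deficit formula on a cycle.} A connected bipartite graph of treewidth at most two is built from edges and even cycles by series–parallel operations: gluing along a vertex or an edge, and parallel composition. We will use the composition machinery that Theorem~\ref{thm:A} rests on. Conditioned on the occupation sets chosen above a separator, the gained count factors over the pieces. Theorem~\ref{thm:A} says each piece's conditional gained count is at most its ordinary count.

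A nonflat cover has some cycle \(C\) with holonomy \(\gamma\ne1\), and the pieces off that cycle only lose. Hence the deficit is bounded below by the deficit incurred on one nonflat cycle (or one nonflat 2-connected block), times the ordinary counts of the rest. We will then check that these products reassemble into \(\Ra_G(\occ)\). The mechanism is that, on the trivial \((M-2)\)-cover, lowering every occupation by one is exactly what survives when one fixed pair of sheets is deleted.

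\emph{Step 2: the cycle computation, which is the core.} Use a transfer-matrix formulation. On an edge \(uv\), the \(k\)-subsets above \(u\) and \(\ell\)-subsets above \(v\) interact through the disjointness kernel \(K(A,B)=[A\cap \alpha(B)=\emptyset]\). Around an even cycle the gained count is \(\tr(T\,P_\gamma)\), where \(P_\gamma\) permutes the \(a_{v_0}\)-subsets. The ordinary count is \(\tr T\).

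Since \(T\) commutes with \(S_M\), decompose it into the Johnson-scheme eigenspaces. We aim to show that the difference \(\tr(T)-\tr(TP_\gamma)\) dominates a sum over subsets \(A\) with \(\gamma A\ne A\). In that sum each non-fixed \(a_{v_0}\)-subset contributes at least the number of completions of a pair of sheets, which is the \((M-2)\)-count with occupations lowered by one. Normalizing, the subset counts contribute \(m_k(\gamma)/\binom{M-2}{k-1}\) with \(k=\min(a_{v_0},M-a_{v_0})\). This is where \(\kappa(\Gamma)\) appears. Nonnegativity of the relevant eigenvalue differences comes from the alternating structure of Kneser-type eigenvalues on an even cycle, which gives positive powers.

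\emph{Step 3: bounds on \(\kappa\).} This is elementary. For \(k=1\), a nonidentity \(\gamma\) moves at least \(2\) points, so \(m_1\ge2=2\binom{M-2}{0}\). For general \(k\), a \(k\)-subset that separates a moved point \(i\) from \(\gamma(i)\) is not fixed. Counting these gives at least \(2\binom{M-2}{k-1}\). If \(|\Gamma|\) is odd, \(\gamma\) has no 2-cycles, so it has a cycle of length \(\ge3\). Then the subsets containing exactly one point of that cycle, or exactly two of its three points, give at least \(3\binom{M-2}{k-1}\).

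For sharpness, take a transposition on \(C_4\) at \(k=1\); this matches Example~\ref{ex:square}, where the deficit is \(2\). Take a 3-cycle for the odd case. Positivity \(\Ra_G(\occ)>0\) follows from feasibility of \(\occ-\one\) for \(M-2\), which holds because \(a_u+a_v\le M\) with positive entries, together with Lemma~\ref{lem:counting}.

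\emph{Main obstacle.} The hardest part will be Step 2, namely making the local cycle bound compatible with the gluing in Step 1. The lower bound must factor exactly as \(\Ra_G\) even when the nonflat cycle shares separators with other blocks. My first approach would be an explicit injection: from pairs consisting of a non-fixed subset at \(v_0\) and a configuration of the \((M-2)\)-cover, map into trivial-cover configurations that have no twisted counterpart. This would avoid the spectral argument, which would serve as the fallback.
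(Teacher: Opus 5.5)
\textbf{The gap is in Step~1.} Theorem~\ref{thm:A} is a scalar comparison. A conditional (fibre) version exists only at an actual edge or a single vertex, via the amplification argument behind Theorem~\ref{thm:C}(i).

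A $2$-connected bipartite graph of treewidth two is not assembled from cycles by edge and vertex gluings. Its series--parallel decomposition uses two-terminal separators $\{s,t\}$ that are in general nonadjacent; already $K_{2,3}$, or any bipartite theta graph, is of this kind. At such a separator there is no boundary-wise comparison. The ordinary completion count depends on the boundary state, and a gained fibre can exceed the ordinary fibre at the same state: in Example~\ref{ex:nonedge} the counts are $1$ and $0$.

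The reason is structural. A branch between $s$ and $t$ with transport $\sigma$ contributes a radial function of $z(S_s,\sigma S_t)$. Two parallel branches with different transports therefore contribute a product $h(x)g(y)$ at two \emph{different} overlaps. The gained count of such a block is then not $\tr(TP_\gamma)$ for any single $\gamma$. So ``the pieces off that cycle only lose'' is unjustified, and the reduction to one nonflat cycle times ordinary counts of the rest has no proof. Example~\ref{ex:mixedranks} gives a $K_{2,3}$ whose deficit $8=2+6$ comes partly from the interaction of two branches, not from one holonomy.

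The missing idea is a way to eliminate a parallel pair while keeping the ordinary count and the reserve exact. The paper works with radial overlap kernels in the Newton cone; in inclusion coordinates each edge is $\binom{z}{r}$. It eliminates vertices of degree at most two using exact serial convolution (Lemma~\ref{lem:serial}), with the two-label identity \eqref{eq:serialdiff} tracking the reserve. When an edge becomes doubled, it replaces $h(x)g(y)$ by $F(x)+G(y)$ from Lemma~\ref{lem:split}. This replacement dominates off the diagonal, is exact on it, and satisfies $\Delta F\ge\Delta h\,\Delta g$. A nonflat branch is retained. Only when both branches are flat does the induction end, in a bridge carrying two kernels with relative transport $\gamma\ne1$, and that is where a local computation like your Step~2 belongs.

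Your Step~2 is sound there, and for an isolated cycle, provided you prove that the cut-open ordinary transfer $T$ is nondecreasing in the shifted overlap; this follows from Lemma~\ref{lem:serial}. Then each moved subset loses at least $\Delta T(r-1)$, where $r=\min(a_{v_0},M-a_{v_0})$. Moreover \eqref{eq:serialdiff} identifies the reserve as $\binom{M-2}{r-1}\Delta T(r-1)$. This is an elementary substitute for the spectral Lemma~\ref{lem:local}.

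Your spectral fallback is weaker than you suggest. It needs the slice scalars to be nonincreasing (Proposition~\ref{prop:scalars}). The sign pattern of Kneser-type eigenvalues on an even cycle gives only nonnegativity; monotonicity of their magnitudes is a separate fact that must be proved. Step~3 and the sharpness examples are fine.
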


Sharpness of the constant two holds for the entire polynomial: if an even
cycle carries one transposition and identities elsewhere, then
\[
 p_G^M-\Pi_{\widetilde G}=2\,\xx^{V}p_G^{M-2}
\]
(Example~\ref{ex:sharp}). Odd order is sufficient but not necessary for
the constant three; the subgroup of \(S_6\) generated by
\((1\,2)(3\,4)(5\,6)\) has \(\kappa=3\), attained at a saturated profile on
\(C_4\) (Example~\ref{ex:even}). The proof actually gives a
profile-dependent constant, in which the minimum in \eqref{eq:kappa} runs
only over \(k\le\max_v\min(a_v,M-a_v)\); at the all-ones profile this
constant is the minimal degree of \(\Gamma\), the least number of labels
moved by a nonidentity element (Corollary~\ref{cor:profilekappa}).

The third theorem shows that the first two are stable under gluing. Call a
graph a \emph{finite seam composition} if it is built from bipartite
\emph{atoms}, each of treewidth at most two or chordal bipartite, by
repeatedly taking the union of two pieces whose intersection is a single
edge with its endpoints, a single vertex, or empty, with no further edges
between the pieces. The bipartitions, occupations, and edge permutations agree
on the intersection.
In particular, the two pieces use the same permutation on a shared
oriented edge in the same endpoint coordinates, and that edge is included
once. A finite construction tree records the atoms and seams. Such graphs
need belong to neither atom class: gluing
\(K_{n,n}\) with \(n\ge3\) to \(C_6\) along an edge produces a graph that is
not chordal bipartite and has treewidth \(n\) (Example~\ref{ex:mixed}).
For a finite seam composition \(H\) with an \(M\)-cover, we summarize
Theorem~\ref{thm:C} in the following form; see
Section~\ref{sec:composition} for its precise statement:
\begin{enumerate}
\renewcommand{\labelenumi}{(\roman{enumi})}
\item \(\Za_{\widetilde H}(\occ)\le\Pa_H(\occ)\) for every profile, and
 more strongly, the completion count of any legal selection at a vertex
 or an edge is at most its trivial-cover count, after relabeling a pinned
 edge to match equal labels;
\item at a feasible profile, equality holds exactly when the cover is flat
 on the active graph, and the relative deficit
 \(\Da/\Pa\) of \(H\) is at least that of every atom and every intermediate
 piece;
\item if \(M\ge2\), the profile is feasible with all occupations
 positive, and a designated atom occurrence \(W\) satisfies
 \(\Da_W\ge\lambda\Ra_W\) for \(\lambda>0\), then
 \(\Da_H\ge\lambda\Ra_H\). In particular one connected nonflat width-two
 atom with
 holonomy image \(\Gamma_W\) forces \(\Da_H\ge\kappa(\Gamma_W)\Ra_H\),
 irrespective of the holonomy on the other atoms.
\end{enumerate}

Finally, on an arbitrary bipartite base, the deficit detects a twist in its
lowest-degree part. If the cover is nonflat, let \(g\) be the length of a shortest cycle with
nonidentity holonomy, and for such a cycle \(C\) let \(m_C\) be the number
of labels moved by its holonomy. Then (Proposition~\ref{prop:first})
\[
 (p_G^M-\Pi_{\widetilde G})_{[d]}=0\ \ (d<g),\qquad
 (p_G^M-\Pi_{\widetilde G})_{[g]}
 =\sum_{\substack{|C|=g\\ \hol(C)\ne1}}m_C\prod_{v\in V(C)}x_v,
\]
where the subscript denotes the homogeneous part of that degree. In
particular \(p_G^M=\Pi_{\widetilde G}\) if and only if the cover is flat,
for every bipartite \(G\). What Theorems~\ref{thm:A}--\ref{thm:C} add is
that, on the stated classes, each feasible coefficient detects
nonflatness of its active cover;
Theorem~\ref{thm:B} additionally supplies a sharp lower bound for
positive profiles on connected width-two bases.

\subsection{Consequences for DP-coloring}\label{sec:intro-dp}
DP-coloring, also called correspondence coloring, was introduced by
Dvo\v{r}\'{a}k and Postle \cite{DvorakPostle}. Adding all edges inside each
fibre of \(\widetilde G\) produces a \emph{full} \(M\)-fold DP cover
\(\mathfrak H\), and an independent transversal of \(\mathfrak H\), one
vertex per fibre, is precisely an independent set of \(\widetilde G\) with
the all-ones profile. Writing \(P_{DP}(G,\mathfrak H)\) for the number of
such transversals, we have \(P_{DP}(G,\mathfrak H)=\Za_{\widetilde G}(\one)\)
and \(P(G,M)=\Pa_G(\one)\). Kaul and Mudrock \cite{KaulMudrock} introduced
the DP color function \(P_{DP}(G,M)\), the minimum of
\(P_{DP}(G,\mathfrak H)\) over \(M\)-fold covers, and Mudrock \cite{Mudrock}
the \emph{dual DP color function}
\[
 P^*_{DP}(G,M)=\max_{\mathfrak H\text{ full }M\text{-fold}}
 P_{DP}(G,\mathfrak H)\ \ge\ P(G,M).
\]
Restricting to full covers is essential here: the edge matchings must be
bijections, as in our permutation model. Kaul, Mudrock, and Sharma proved
\(P^*_{DP}(K_{m,n},M)=P(K_{m,n},M)\) \cite[Theorem~10]{KMS}, and Bui,
Kaul, Maxfield, Mudrock, Shin, and Thomason obtained the same identity for
bipartite generalized theta graphs \cite[Theorem~14]{ThetaDP}. Since
\(K_{m,n}\) is chordal bipartite and bipartite generalized theta graphs
have treewidth at most two, Theorem~\ref{thm:A} at the all-ones profile
contains both results and adds the characterization of the maximizers.

\begin{samepage}
\begin{corollary}[Dual DP color function]\label{cor:dualDP}
Let \(H\) be a finite seam composition. Then, for every \(M\ge1\),
\[
 P^*_{DP}(H,M)=P(H,M).
\]
A full \(M\)-fold cover attains this maximum if and only if it admits a
canonical labeling. In particular this holds for every bipartite graph
of treewidth at most two and every chordal bipartite graph.
\end{corollary}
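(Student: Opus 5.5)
The plan is to reduce the corollary to the singleton-profile case of Theorem~\ref{thm:C}, using the dictionary set up in Section~\ref{sec:intro-dp}.

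Let \(\mathfrak H\) be a full \(M\)-fold DP cover of \(H\). After choosing a bijection of each fibre with \([M]\), the cross edges above each edge \(uv\) of \(H\) form a perfect matching, because the cover is full. We record this matching as a permutation \(\alpha_{uv}\). Deleting the fibre cliques then gives an \(M\)-cover \(\widetilde H\) in the paper's sense. Independent transversals of \(\mathfrak H\) are exactly the independent sets of \(\widetilde H\) with profile \(\one\), so
\[
P_{DP}(H,\mathfrak H)=\Za_{\widetilde H}(\one).
\]
Likewise \(P(H,M)=\Pa_H(\one)\), since a proper \(M\)-coloring is the same as a choice of one lift per vertex in the trivial cover. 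Conversely, every \(M\)-cover arises in this way from a full DP cover.

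If \(H\) has no edges, both sides equal \(M^{|V(H)|}\) and every cover is trivially canonical. So assume \(H\) has an edge.

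\textbf{Case \(M=1\).} The profile \(\one\) is infeasible, since an edge would need \(a_u+a_v=2\le1\). Both counts are \(0\), and the unique cover is trivial, hence canonical. So the claim holds.

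\textbf{Case \(M\ge2\).} The profile \(\one\) is feasible, and its active graph is all of \(H\). Theorem~\ref{thm:C}(i) gives \(\Za_{\widetilde H}(\one)\le\Pa_H(\one)\). Theorem~\ref{thm:C}(ii) gives equality exactly when \(\widetilde H\) is flat on \(H\). Maximizing over \(\mathfrak H\), and using the trivial cover to attain the bound, we get \(P^*_{DP}(H,M)=P(H,M)\).

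Flatness means that vertexwise relabelings turn every \(\alpha_{uv}\) into the identity. This is precisely the existence of a labeling of each fibre by \([M]\) under which every matching joins equal labels, that is, a canonical labeling. This gives the characterization of the maximizers.

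The final sentence of the corollary follows because a single atom is a finite seam composition with a one-node construction tree. Alternatively, it follows directly from Theorem~\ref{thm:A} at \(\occ=\one\).

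The only step needing care is the translation. Two points must be checked. First, the choice of fibre labeling does not affect whether the cover is flat, since relabeling a fibre is exactly switching. Second, ``canonical labeling'' in the DP literature coincides with flatness of the whole cover, not only of the active cover. This is why we need the active graph of \(\one\) to be all of \(H\), and this holds for every \(M\ge1\).
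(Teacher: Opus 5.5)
Your proposal is correct and follows the paper's proof. For \(M\ge2\) you apply Theorem~\ref{thm:C}(i)--(ii) at the singleton profile, whose active graph is all of \(H\), and the identity cover attains the bound; for \(M=1\) every cover is canonical with count \(P(H,1)\). Your separate edgeless case is harmless but unnecessary, since these two cases already cover it.
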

\end{samepage}
\begin{proof}
For \(M\ge2\), the singleton profile is positive and feasible, and its
active graph is \(H\). Apply Theorem~\ref{thm:C}(i)--(ii); the identity cover
attains the ordinary count. For \(M=1\), every edge permutation is the
identity, so every full cover is canonical and its count is \(P(H,1)\),
including the zero count when \(H\) has an edge.
\end{proof}

Combined with Dong's inequality
\(P(H,k+1)/(k+1)^n\ge P(H,k)/k^n\) for positive integers \(k\ge n-1\)
\cite[Theorem~2.2]{Dong}, this settles a conjecture of Kaul, Mudrock, and
Sharma on these graphs.

\begin{corollary}[Normalized dual DP inequality]\label{cor:shameful}
Let \(H\) be a finite seam composition on \(n\ge1\) vertices. For every
integer \(k\ge\max\{1,n-1\}\),
\[
 \frac{P^*_{DP}(H,k+1)}{(k+1)^n}
 \ge \frac{P^*_{DP}(H,k)}{k^n}.
\]
In particular this holds on both atom classes of Theorem~\ref{thm:A}.
\end{corollary}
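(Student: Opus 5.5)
The plan is to combine Corollary~\ref{cor:dualDP} with Dong's inequality. Let \(H\) be a finite seam composition on \(n\ge1\) vertices, and fix an integer \(k\ge\max\{1,n-1\}\). Corollary~\ref{cor:dualDP} holds for every \(M\ge1\). Applying it at \(M=k\) and at \(M=k+1\) gives
\[
 P^*_{DP}(H,k)=P(H,k),\qquad P^*_{DP}(H,k+1)=P(H,k+1).
\]
This reduces the claimed inequality exactly to
\[
 \frac{P(H,k+1)}{(k+1)^n}\ge\frac{P(H,k)}{k^n}.
\]
Dong's theorem \cite[Theorem~2.2]{Dong} asserts this inequality for every graph on \(n\) vertices and every positive integer \(k\ge n-1\). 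Our hypotheses supply both conditions: \(k\ge1\) makes \(k\) a positive integer, and \(k\ge n-1\) is assumed.

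Three bookkeeping points need checking.
\begin{enumerate}
\renewcommand{\labelenumi}{(\alph{enumi})}
\item Dong's statement must be applied to the right object. \(H\) is a simple graph, and \(P(H,\cdot)\) is its ordinary chromatic polynomial. Dong's result is stated for general simple graphs, so no bipartiteness or structural hypothesis is needed on that side.
\item The edge case \(n=1\). Here \(\max\{1,n-1\}=1\), and for a single vertex both sides equal \(1\). The inequality is then an equality, which agrees with Dong.
\item The final sentence, about the two atom classes, follows because every bipartite graph of treewidth at most two and every chordal bipartite graph is a finite seam composition with a single atom. Corollary~\ref{cor:dualDP} already records this case explicitly.
\end{enumerate}

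I expect no real obstacle. All the substantive work sits in Theorem~\ref{thm:C}(i)--(ii), through Corollary~\ref{cor:dualDP}, and in Dong's theorem. The only care required is to confirm that Dong's hypothesis on \(k\) matches the range \(k\ge\max\{1,n-1\}\), and that the equality \(P^*_{DP}=P\) is invoked at both \(M=k\) and \(M=k+1\).
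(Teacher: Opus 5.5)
Your proof is correct and is the same as the paper's. You use Corollary~\ref{cor:dualDP} at both \(M=k\) and \(M=k+1\) to replace \(P^*_{DP}\) by the chromatic polynomial, and then apply Dong's inequality, whose hypothesis (a positive integer \(k\ge n-1\)) is exactly what \(k\ge\max\{1,n-1\}\) provides.
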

\begin{proof}
Corollary~\ref{cor:dualDP} gives \(P^*_{DP}(H,j)=P(H,j)\) for every
\(j\ge1\).
Dong's chromatic-polynomial inequality \cite[Theorem~2.2]{Dong} states that
\(P(H,k+1)/(k+1)^n\ge P(H,k)/k^n\) for positive integers
\(k\ge n-1\). Substitution proves the assertion.
\end{proof}
This establishes the Dual-DP Shameful inequality of
\cite[Conjecture~9]{KMS} on the stated graph class; the general
conjecture remains open. The complete-bipartite case
\cite[Corollary~11]{KMS} was obtained there by the same use of Dong's
theorem. Theorem~\ref{thm:B} adds a quantitative statement in this setting:
for \(M\ge2\), a noncanonical full \(M\)-fold cover
\(\mathfrak H\) of a connected width-two bipartite graph with an edge
and holonomy image \(\Gamma\) satisfies
\[
 P(G,M)-P_{DP}(G,\mathfrak H)\ge\operatorname{mindeg}(\Gamma),
\]
where \(\operatorname{mindeg}(\Gamma)\) is the least number of labels
moved by a nonidentity element (Corollary~\ref{cor:profilekappa}).

\subsection{Relation to earlier work}\label{sec:intro-context}
\paragraph{Evaluations versus coefficients.}
Ruozzi proved that for log-supermodular graphical models the partition
function of every \(M\)-cover is at most the \(M\)-th power of the base
partition function, which yields the Bethe lower bound
\cite[Theorem~4.1 and Section~4.2]{Ruozzi}. After complementing one side of
the bipartition, independent sets of a bipartite graph form such a model,
so \(\Pi_{\widetilde G}(\boldsymbol\lambda)\le p_G(\boldsymbol\lambda)^M\)
for all positive activities \(\boldsymbol\lambda\). Such evaluation
inequalities do not imply coefficient inequalities, as \((x-1)^2\ge0\)
shows, and a fixed-occupation constraint cannot be inserted into Ruozzi's
argument: the indicator of \(\{|S|=a\}\) on \(2^{[M]}\) is not
log-supermodular for \(0<a<M\), since two distinct \(a\)-sets have
intersection and union of other sizes. The coefficientwise question thus
requires different tools.

\paragraph{Two sheets.}
For \(M=2\) the conjecture is known for every bipartite base. Csikv\'ari's
comparison for the bipartite double cover works within fibres of the
projection \cite[proof of Theorem~2.1, equation~(2.8)]{Csikvari}, and since
the bipartite
double cover of a bipartite graph is two copies of it, that argument keeps
the whole occupation profile. Remark~\ref{rem:two} gives an elementary
proof which also yields rigidity and the exact deficit at \(M=2\).
The all-\(M\) results require elimination schemes that preserve mixed
occupations and arbitrary holonomy, beyond this two-sheet count.

\paragraph{DP color functions.}
Canonical-labeling rigidity for singleton counts was studied for unicyclic
and theta graphs by Li and Yang \cite[Theorems~3.9--3.10]{LiYang}; their
Examples~3.2--3.3 exhibit noncanonical full covers of nonbipartite graphs
with the ordinary number of colorings, so a value comparison and an
equality classification are separate questions. Outside the bipartite
world the dual function need not even be eventually polynomial: Mudrock and
Sharbel determine a parity-dependent value for \(K_4\)
\cite[Theorem~3]{DualPolynomial}. At the opposite extremum, Kaul, Mudrock,
Sharma, and Ullyot determine the minimum DP color function of a bipartite
graph for large \(q\) in terms of its Tutte polynomial
\cite[Theorem~3 and Remark~4]{KMSU}.
For an \(n\)-vertex bipartite graph containing a cycle, let \(g_0\) be its
girth, the length of a shortest cycle, and let \(t\) count its
\(g_0\)-cycles. They prove \cite[Theorem~6]{KMSU} that
\[
 P(G,q)-P_{DP}(G,q)=tq^{n-g_0+1}+O(q^{n-g_0})
 \qquad(q\to\infty).
\]
The same
shortest-cycle mechanism appears in Proposition~\ref{prop:first}, there as
an exact multivariate formula for a fixed cover and fixed \(M\).
Occupations larger than one occur in fractional DP-coloring: the
quasi-independent sets of Bernshteyn, Kostochka, and Zhu
\cite[Definitions~1.1, 1.4--1.5]{BKZ} with \(|Q\cap L(v)|=a_v\) are counted
exactly by \(\Za_{\widetilde G}(\occ)\). Their work asks for existence
under lower bounds on occupations; ours fixes every occupation and compares
counts.

\subsection{Methods and organization}\label{sec:intro-methods}
The two classes are handled by two different elimination schemes, each
designed to preserve the obstruction to flatness while removing a vertex.

For width two we recast the count as a sum over families of subsets
\(A_v\subseteq[M]\), one per vertex, with the edge constraint
\(\alpha_{uv}A_u\subseteq A_v\). This constraint depends only on the size of
\(A_u\cap\alpha_{uv}^{-1}A_v\). After subtracting the smallest possible
overlap, its indicator is a binomial coefficient of top degree.
Functions of this shifted overlap that are
nonnegative combinations of binomial coefficients form a cone that is closed
under the operation of summing out a vertex of degree two
(Lemma~\ref{lem:serial}); this is the exact step. When summing out creates
an edge parallel to an existing one, the product of two kernels is replaced
by a sum of two kernels that agrees with the product on the diagonal and
dominates it off the diagonal (Lemma~\ref{lem:split}). This step bounds
the cover count from above while keeping the trivial-cover count exact.
The quantitative induction retains a nonflat branch; when both branches
are flat, their different transports remain across a common bridge.
The loss there is computed exactly by harmonic analysis on the Johnson
scheme (Lemma~\ref{lem:local}), which is where \(\kappa(\Gamma)\)
comes from. The same induction, run with forward differences, controls the
reduced reserve and gives Theorem~\ref{thm:B}.

For chordal bipartite graphs we use the classical fact that these are the
incidence graphs of \(\beta\)-acyclic hypergraphs
\cite{Fagin,Ausiello,BraultBaron}: one can repeatedly delete a column whose
incident row neighborhoods are nested, a \emph{nest point}
\cite{BrouwerKolen}. Summing out such a column is a compression inequality
of rearrangement type (Lemma~\ref{lem:compression}), and it is an equality
in the trivial cover precisely because the relevant intersections are
nested. Rigidity is obtained by saturating one side of the bipartition and
contracting a tight star (Lemmas~\ref{lem:saturation}--\ref{lem:parallel}).
The following table summarizes the two mechanisms.
\begin{center}
\small
\begin{tabular}{@{}>{\raggedright\arraybackslash}p{.17\textwidth}
>{\raggedright\arraybackslash}p{.36\textwidth}
>{\raggedright\arraybackslash}p{.36\textwidth}@{}}
\toprule
 & Treewidth at most two & Chordal bipartite\\
\midrule
Reduction & Eliminate a vertex of degree at most two
 & Eliminate a nest column; contract a saturated star\\
Exact step & Serial convolution and its forward difference
 & Compression on nested intersections; star assignment bijection\\
Preserved loss & Diagonal-exact split or local holonomy collision
 & Nonnegative saturation remainder and rejected parallel constraints\\
Terminal case & Two flat branches separated by a bridge
 & A flat representative with a conflicting omitted edge\\
\bottomrule
\end{tabular}
\end{center}

Composition (Section~\ref{sec:composition}) rests on an amplification
argument: gluing many copies of a piece along an edge and applying
Theorem~\ref{thm:A} to the result shows that, in a piece from either class,
the number of completions of any legal selection on an edge is at most the
trivial-cover constant. Such fibre bounds pass through edge and vertex
seams, and a seam identity converts them into the equality characterization
and the propagation of deficits.

Section~\ref{sec:prelim} fixes notation, proves the first-defect formula,
and works out the twisted square. Sections~\ref{sec:networks}
and~\ref{sec:width} prove the width-two comparison and the sharp deficit;
Section~\ref{sec:chordal} proves the chordal-bipartite case;
Section~\ref{sec:composition} proves Theorem~\ref{thm:C};
Section~\ref{sec:questions} collects open questions. All inputs to
Theorems~\ref{thm:A}--\ref{thm:C} are proved in the paper. The harmonic
analysis on slices of the Boolean cube is classical \cite{Filmus,Ghareghani},
but we need it with a specific counting normalization and a monotonicity of
its scalars, so Appendix~\ref{app:spectral} derives what is used. Likewise
Appendix~\ref{app:structure} follows Brault-Baron's proof of the
two-nest-point theorem \cite{BraultBaron} to obtain a nest-point order
ending at a prescribed column, which the rooted comparisons require.
This root-last choice is a consequence of the classical two-nest-point
theorem, rather than a new structural result. Corollary~\ref{cor:shameful}
uses Dong's inequality in addition to these self-contained inputs.

\begin{samepage}
\subsection*{Formalization}
All clauses of Theorems~\ref{thm:A}--\ref{thm:C} have been formalized in
Lean~4 using mathlib. The source code, build instructions, and a
correspondence between the statements in this paper and the formal
declarations are available in the accompanying repository
\cite{WatanabeLean}. The repository documents the precise scope of the
formalization; some auxiliary results and applications are not included.
\par
\end{samepage}

\begin{samepage}
\section{Occupations, gauges, and the first defect}\label{sec:prelim}

We use the following terminology throughout: the \emph{ordinary count}
\(\Pa\) refers to the trivial cover, and the \emph{gained count} \(\Za\)
to the cover with its prescribed edge permutations, or \emph{transports}.
A family of vertexwise relabelings is called a \emph{gauge}.
The \emph{active cover} is the restriction to \(G_\occ\).
The singleton and all-ones profiles both mean \(\occ=\one\), and
\(M\)-cover and \(M\)-fold cover are synonymous.
\end{samepage}

We use \(\binom nk=0\) for \(n\ge0\) unless \(0\le k\le n\);
\(\binom00=1\). Empty sums are zero and empty products are one.
There is one assignment on the empty vertex set, so an assignment sum
over no vertex variables has one term.
All independent-set base graphs are finite, simple, and bipartite.
Auxiliary overlap networks below may have triangles, and contracted
constraint graphs may have parallel labelled edges.
When the inherited profile and cover are clear, we abbreviate
\(\Pa_J,\Za_J,\Da_J,\Ra_J\) by \(P_J,Z_J,D_J,R_J\).
The following notation distinguishes raw occupations from inclusion ranks
and boundary constants from saturation weights.
\begin{center}
\small
\begin{tabular}{@{}ll@{}}
\toprule
Symbol & Meaning\\
\midrule
\(\occ,\ a_v\) & Raw occupation vector and its vertex entries\\
\(p_v\) & Inclusion rank: \(a_v\) on \(L\), \(M-a_v\) on \(R\)\\
\(\ell_{pq},r_{pq},z\) & Minimum overlap, shifted radius, shifted overlap\\
\(P,Z,D,R\) & Ordinary count, gained count, deficit, reduced reserve\\
\(\Gamma,\kappa(\Gamma)\) & Holonomy image and its moved-subset constant\\
\(\beta,c\) & Legal seam cardinality and ordinary fibre constant\\
\(b_v,\eta_v,\Lambda,\varepsilon\) & Saturated occupation, weight, product, surplus\\
\(\partial_k,\mathcal H_i\) & Down operator and harmonic subspace\\
\(\overline H\) & Underlying simple graph of a contracted multigraph\\
\bottomrule
\end{tabular}
\end{center}
The independent-set polynomial \(p_G\), inclusion ranks \(p_v\), and
chromatic polynomial \(P(G,M)\) have different subscripts or arguments;
\(\Pa_G(\occ)\) always denotes the fixed-occupation count.
An edge \emph{transport} is its permutation map (also called a gain in
gain-graph terminology). A \emph{holonomy} is the composite transport
around a closed walk; it is not an individual edge label.

\begin{lemma}[Counting and support]\label{lem:counting}
\(\Za_{\widetilde G}(\occ)\) counts families \(S_v\subseteq[M]\) with
\(|S_v|=a_v\) and
\[
 S_v\cap\alpha_{uv}S_u=\varnothing\quad(uv\in E).
\]
\(\Pa_G(\occ)\) is the same count with every map identity.
Outside \eqref{eq:feasible} both counts vanish; within it \(\Pa_G(\occ)>0\).
Removing zero occupations preserves both counts.
An isolated active vertex supplies the common positive factor
\(\binom M{a_v}\).
\end{lemma}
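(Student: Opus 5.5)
We prove Lemma~\ref{lem:counting} by unwinding the definition of \(\Pi_{\widetilde G}\) and then handling feasibility, zero occupations and isolated vertices in turn.

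\emph{The count.} An independent set \(\widetilde I\) of \(\widetilde G\) is determined by its fibre slices \(S_v=\{i:(v,i)\in\widetilde I\}\subseteq[M]\). After the substitution \(x_{(v,i)}=x_v\), the monomial of \(\widetilde I\) is \(\prod_v x_v^{|S_v|}\). So \([\xx^\occ]\Pi_{\widetilde G}\) counts the families with \(|S_v|=a_v\) that are independent.

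The only edges of \(\widetilde G\) lie above edges \(uv\in E\), joining \((u,i)\) to \((v,\alpha_{uv}(i))\). Independence therefore says exactly that no \(i\in S_u\) has \(\alpha_{uv}(i)\in S_v\), that is, \(S_v\cap\alpha_{uv}S_u=\varnothing\). This condition is symmetric under \(\alpha_{vu}=\alpha_{uv}^{-1}\), so it does not depend on the orientation chosen for \(uv\). The trivial cover has all maps equal to the identity and polynomial \(p_G^M\), which gives the statement for \(\Pa_G(\occ)\).

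\emph{Vanishing outside \eqref{eq:feasible}.} If some \(a_v\notin[0,M]\), there is no subset of \([M]\) of size \(a_v\), so both counts are zero. If \(a_u+a_v>M\) on an edge \(uv\), then \(S_v\) and \(\alpha_{uv}S_u\) are subsets of \([M]\) whose sizes sum to more than \(M\). Since \(\alpha_{uv}\) is a bijection, \(|\alpha_{uv}S_u|=a_u\), so the two sets must intersect, and again both counts are zero.

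\emph{Positivity inside \eqref{eq:feasible}.} Here we use the bipartition \(L\cup R\) and give an explicit family. Take \(S_v=\{1,\dots,a_v\}\) for \(v\in L\) and \(S_v=\{M-a_v+1,\dots,M\}\) for \(v\in R\). Every edge joins some \(u\in L\) to some \(v\in R\). With identity maps, the constraint on \(uv\) is that \(\{1,\dots,a_u\}\) and \(\{M-a_v+1,\dots,M\}\) are disjoint, which holds because \(a_u\le M-a_v\). Hence \(\Pa_G(\occ)\ge1\). This is the only step that uses bipartiteness; on an odd cycle the analogous family can fail.

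\emph{Zero occupations and isolated vertices.} If \(a_v=0\), then \(S_v=\varnothing\), and every constraint involving \(v\) holds automatically. Deleting such vertices therefore gives a bijection between the families on \(G\) and those on \(G_\occ\) with the restricted cover, and this holds for both counts. An active vertex with no active neighbours carries no constraint after this deletion, so it contributes an independent choice of \(S_v\) among \(\binom M{a_v}\) subsets. That factor appears in both counts and is positive since \(a_v\le M\).

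There is no real obstacle: every step is a direct consequence of the definitions.
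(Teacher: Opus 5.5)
Your proof is correct and follows the paper's argument essentially step for step. Fibre slices encode independent sets, disjointness forces the feasibility inequalities, the left-prefix/right-suffix family \(S_u=[a_u]\), \(S_v=[M]\setminus[M-a_v]\) witnesses \(\Pa_G(\occ)>0\), and zero occupations and isolated active vertices are handled through vacuous or absent constraints. Your remark that bipartiteness enters only in the positivity step is correct (it fails, for instance, on a triangle with \(M=2\) and all occupations one).
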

\begin{proof}
Selecting the vertices above \(v\) is exactly selecting \(S_v\); the
displayed condition excludes precisely the lifted edges. The identity
cover consists of \(M\) independent copies, proving the ordinary formula.
Disjointness gives the feasibility inequalities in either model.
For a feasible profile choose \(S_u=[a_u]\) on \(L\) and
\(S_v=[M]\setminus[M-a_v]\) on \(R\); every edge is disjoint.
A zero occupation has a unique empty set and vacuous constraints;
an isolated vertex is unrestricted.
\end{proof}

In inclusion coordinates write \(A_u=S_u\) on \(L\) and
\(A_v=[M]\setminus S_v\) on \(R\), with ranks
\begin{equation}\label{eq:inclusion}
 p_u=a_u,\quad p_v=M-a_v,\qquad \alpha_{uv}A_u\subseteq A_v.
\end{equation}
In a feasible active component with an edge every raw occupation, and
therefore every inclusion rank, lies in \(1,\ldots,M-1\).
For \(M=1\) the feasible active graph has no edges.

Relabeling \(S_v=\gamma_v S'_v\) changes maps to
\(\alpha'_{uv}=\gamma_v^{-1}\alpha_{uv}\gamma_u\) and bijects counted
families. Path transport is ordered composition in the direction of
travel. Reversal inverts a closed transport; changing its base point
conjugates it, preserving its number of fixed labels.

\begin{lemma}[Flatness]\label{lem:flat}
All closed-walk transports are identity if and only if all simple-cycle
transports are identity, if and only if vertex gauges make every edge
identity. A flat active cover has zero deficit.
Counts factor over connected components.
\end{lemma}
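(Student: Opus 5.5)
The plan is to prove the equivalences in a cycle and then read off the last two sentences. Closed-walk transports identity implies simple-cycle transports identity, since a simple cycle is a closed walk. For the converse, I would decompose a closed walk: any closed walk that is not a simple cycle revisits some vertex. So it splits at that vertex into two shorter closed walks based there, and its transport is the ordered composite of their transports. A walk that immediately backtracks along an edge contributes \(\alpha_{vu}\alpha_{uv}=\mathrm{id}\), because \(\alpha_{vu}=\alpha_{uv}^{-1}\). By induction on length, every closed-walk transport is then a product of conjugates of simple-cycle transports, one for each change of base point. Conjugates of the identity are the identity, which proves the first equivalence.

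For the second equivalence, work in each connected component separately. Choose a spanning tree \(T\) and a root \(r\), set \(\gamma_r=\id\), and define \(\gamma_v\) to be the transport along the unique tree path from \(r\) to \(v\). With the relabeling rule \(\alpha'_{uv}=\gamma_v^{-1}\alpha_{uv}\gamma_u\), every tree edge becomes the identity. For a non-tree edge \(uv\), the new map \(\alpha'_{uv}\) is exactly the transport around the closed walk \(r\to u\to v\to r\), taken in the gauge where the tree is identity. So \(\alpha'_{uv}=\id\) whenever all closed walks are flat. Conversely, gauge changes conjugate every closed transport at its base point: the transport around a closed walk at \(w\) becomes \(\gamma_w^{-1}(\cdot)\gamma_w\). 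Hence if some gauge makes every edge identity, every closed transport is conjugate to \(\id\), which is \(\id\). I expect the only delicate bookkeeping to be the order of composition, which must follow the paper's convention that path transport composes in the direction of travel; the conjugation formula has to be checked against that convention once.

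For the zero deficit of a flat active cover, combine Lemma~\ref{lem:counting} with the relabeling bijection stated just before this lemma. By Lemma~\ref{lem:counting}, \(\Za_{\widetilde G}(\occ)\) depends only on the cover restricted to \(G_\occ\). The flat gauge makes every active edge map the identity, and relabeling bijects the counted families, so the count equals the identity count, which is \(\Pa_G(\occ)\). Therefore \(\Da_{\widetilde G}(\occ)=0\).

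Factorization over components is immediate from Lemma~\ref{lem:counting}. The counted families \((S_v)\) are subject only to edge constraints, and no edge joins different components, so the set of admissible families is the product of the admissible sets of the components. This holds for both \(\Za\) and \(\Pa\).
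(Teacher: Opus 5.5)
Your proof is correct, and most of it matches the paper's proof. The shared ingredients are the rooted spanning-tree gauge with \(\gamma_v\) the tree-path transport, the fact that a gauge change conjugates each closed transport by \(\gamma_w\) at its base point, the relabeling bijection for the zero deficit, and independent choices across components.

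The one real difference is how you get from simple-cycle flatness to the trivializing gauge.

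\emph{Your route.} You first upgrade simple-cycle flatness to closed-walk flatness, by splitting closed walks at repeated vertices and cancelling backtracks. Only then do you apply it to the walk \(r\to u\to v\to r\). In your ordering this step is necessary: the two tree paths usually share a prefix, so that walk is not itself a simple cycle.

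\emph{The paper's route.} The paper notes instead that each chord map in the tree gauge equals the transport around the chord's fundamental simple cycle, since the shared prefix only conjugates. So simple-cycle flatness directly makes every edge identity, and the gauge then gives closed-walk flatness by the same conjugation argument you use. This closes a single cycle of implications (simple cycles \(\Rightarrow\) gauge \(\Rightarrow\) closed walks \(\Rightarrow\) simple cycles) with no induction on walk length.

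\emph{What each buys.} Your decomposition gives the tree-independent fact that every closed-walk transport is a product of conjugates of simple-cycle transports. The paper gets a comparable generation statement for the holonomy image from the tree gauge, in the remark right after the lemma.
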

\begin{proof}
Choose a rooted spanning tree and let \(\gamma_v\) be transport from its
root to \(v\). Tree edges become identity; every chord map is the transport
around its fundamental simple cycle. Thus simple-cycle flatness makes
every map identity, which implies closed-walk flatness. Undoing a gauge
only conjugates closed transports. The counting claims follow from the
gauge bijection and independent choices in separate components.
\end{proof}
In a tree gauge the group of transports of closed walks based at its
root equals the group generated by chord maps: a walk is a word in
those maps, and each chord map is realized by a fundamental closed walk.
Changing the tree only changes the coordinate gauge, and changing the
root conjugates this group by path transport. This proves the stated
conjugacy invariance of the holonomy image and of \(\kappa\).

\begin{remark}[The double-cover case]\label{rem:two}
Let \(M=2\), let \(\occ\) be feasible on any bipartite base, and put
\(H=G[\{v:a_v=1\}]\). Count isolated vertices in \(c(H)\), and use
\(c(\varnothing)=0\). Then
\[
 \Pa_G(\occ)=2^{c(H)},\qquad
 \Za_{\widetilde G}(\occ)=
 \begin{cases}
 2^{c(H)},&\widetilde G|_{G_\occ}\text{ is flat},\\
 0,&\text{otherwise}.
 \end{cases}
\]
Indeed, an occupation-two vertex has only one selected set and is
isolated in the active graph by feasibility. On each component of \(H\),
the inclusion ranks are all one, so fixing one label determines every
other set along a spanning tree. Flatness gives two choices; a nonflat
cycle has the nonidentity element of \(S_2\), which fixes neither label,
and hence permits no choice. The same argument applies to an isolated
vertex of \(H\).
In particular, on a connected base with an edge and a positive feasible
profile, \(\occ=\one\). If the cover is nonflat, then
\(D_{\widetilde G}(\one)=2\), \(R_G(\one)=1\), and \(D=2R\),
regardless of treewidth or chordal bipartiteness.
This componentwise argument is the elementary two-label mechanism
underlying the projection-fibre comparison discussed in
\cite[proof of Theorem~2.1]{Csikvari}; we do not count it as a separate principal
contribution.
Under \(S_2\cong\{+1,-1\}\), flatness says that the product of edge
signs around every cycle is positive, which is the usual signed-graph
balance condition. For a related connection between dual DP counts and
balanced chromatic polynomials, see Mudrock and Sharbel
\cite{DualPolynomial}. Our two-label count follows directly from
inclusion coordinates and does not require a signed-coloring formula.
\end{remark}

\begin{proposition}[First homogeneous defect]\label{prop:first}
For a nonflat cover of any bipartite \(G\), let \(g\) be the minimum
length of a nonflat simple cycle. For an unoriented such cycle counted
once, put \(m_C=M-\#\Fix(\hol(C))\). Then
\[
 (p_G^M-\Pi_{\widetilde G})_{[d]}=0\ (d<g),\qquad
 (p_G^M-\Pi_{\widetilde G})_{[g]}
 =\sum_{\substack{|C|=g\\\hol(C)\ne1}}m_C\prod_{v\in V(C)}x_v.
\]
Each displayed vertex set supports a unique chordless cycle, so its
monomial is not repeated. The value \(m_C\) is independent of a chosen
base point and direction: conjugation and inversion preserve the number
of fixed labels. In particular, for every bipartite base,
\(p_G^M=\Pi_{\widetilde G}\) if and only if the cover is flat.
\end{proposition}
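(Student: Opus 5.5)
Expand both polynomials as sums over subsets of vertices. The coefficient of a squarefree-support monomial \(\xx^{\occ}\) has total degree \(d=\sum_v a_v\), and its support \(U=\{v:a_v>0\}\) has at most \(d\) vertices. By Lemma~\ref{lem:counting}, removing zero occupations preserves both counts. Hence \(\Za_{\widetilde G}(\occ)\) depends only on the cover restricted to \(G[U]\).

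The plan has three steps.

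\textbf{Step 1: degrees \(d<g\).} Take \(d<g\) and any profile \(\occ\) of total degree \(d\). Every simple cycle of \(G[U]\) has length at most \(|U|\le d<g\). By the minimality of \(g\), each such cycle is flat. Lemma~\ref{lem:flat} then shows that \(\widetilde G|_{G[U]}\) is flat and the deficit is zero. Summing over all such profiles gives \((p_G^M-\Pi_{\widetilde G})_{[d]}=0\).

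\textbf{Step 2: degree \(g\), reduction to the cycle.} Let \(d=g\). If \(G[U]\) contains no nonflat cycle, the deficit vanishes as in Step 1. Otherwise \(G[U]\) contains a nonflat simple cycle, of length at least \(g\) but at most \(|U|\le g\). So \(|U|=g\), every \(a_v=1\), and the cycle \(C\) spans \(U\).

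Next, \(G[U]\) is exactly the chordless cycle \(C\). Suppose \(C\) had a chord. Since \(G\) is bipartite, the chord splits \(C\) into two shorter even cycles. The transport around \(C\) is, up to conjugation, the composite of the transports around these two cycles. Both shorter cycles are flat by minimality, so \(C\) would be flat, a contradiction.

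This also gives the uniqueness claim in the statement. A vertex set of size \(g\) supporting a nonflat \(g\)-cycle induces only that cycle. Hence each monomial \(\prod_{v\in V(C)}x_v\) arises from exactly one unoriented cycle.

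\textbf{Step 3: the count on the cycle.} It remains to compute \(\Pa-\Za\) for a \(g\)-cycle at the all-ones profile. I would use a gauge that makes every edge the identity except one, whose transport is \(h=\hol(C)\).

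Each \(S_v\) is a single label. On a path every constraint \(S_v\neq\alpha S_u\) is a "different label" condition, so the count is a transfer-matrix trace. Write \(J\) for the all-ones \(M\times M\) matrix, \(I\) for the identity, \(T=J-I\), and \(P_h\) for the permutation matrix of \(h\). Then
\[
\Za=\tr\bigl(T^{g-1}(J-P_h)\bigr),\qquad \Pa=\tr\bigl(T^{g-1}(J-I)\bigr),
\]
so
\[
\Pa-\Za=\tr\bigl(T^{g-1}(P_h-I)\bigr).
\]
The matrix \(T^{g-1}\) is a polynomial in \(J\) and \(I\), namely \(\alpha J+\beta I\) with \(\beta=(-1)^{g-1}\). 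Since \(\tr(J(P_h-I))=M-M=0\) and \(\tr(P_h-I)=\#\Fix(h)-M\), we get
\[
\Pa-\Za=(-1)^{g-1}\bigl(\#\Fix(h)-M\bigr)=m_C,
\]
because \(g\) is even. The independence of \(m_C\) from base point and direction follows from the conjugation and inversion remarks before Lemma~\ref{lem:flat}.

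\textbf{Final claim.} If the cover is flat, Lemma~\ref{lem:flat} gives \(p_G^M=\Pi_{\widetilde G}\). If it is nonflat, the degree-\(g\) part is a nonzero sum, since \(m_C\ge2\) whenever \(h\ne1\).

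\textbf{Main obstacle.} The delicate step is the chord argument in Step 2. One must check that the holonomy of \(C\) factors, up to conjugation, as the product of the two sub-cycle holonomies based at an endpoint of the chord. This is a routine path-composition identity. The trace computation itself is straightforward.
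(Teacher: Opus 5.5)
Your proposal is correct and follows the paper's proof step for step. You use the same support-size argument below degree \(g\), the same chord argument forcing the support to be a chordless nonflat \(g\)-cycle with all occupations one, and the same \(J-I\) transfer-matrix trace after gauging a spanning path to the identity. The only cosmetic difference is in the trace: you write \(\tr(T^{g-1}(J-P_h))\) and expand \(T^{g-1}=\alpha J+\beta I\), whereas the paper writes \(\tr(\mathsf A_M^gP_\sigma)\) and uses \(\mathsf A_M^g=(M-1)^gE_0+(-1)^g(I-E_0)\). These are the same quantity, since \(JP_\sigma=J\).
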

\begin{proof}
A support with fewer than \(g\) vertices has only flat simple cycles
and hence zero deficit. At total degree \(g\), a nonflat support must
contain a nonflat cycle of length \(g\), consist exactly of its vertices,
and have occupation one everywhere.
A chord would split this cycle into two shorter cycles. Orient the
two arcs between the chord endpoints in the same direction. If both
shorter cycles were flat, both arc transports would equal the chord
transport, making the original cycle flat. Thus the cycle is chordless
and is the entire induced graph on its support.

Gauge a spanning path of this cycle to identity, leaving one map \(\sigma\).
Let \(\mathsf A_M=J-I\) on \(\mathbb R^M\), where \(J\) is all ones.
The singleton gained count is \(\tr(\mathsf A_M^g P_\sigma)\), where \(P_\sigma\)
is the permutation matrix (using its inverse gives the same trace).
For \(E_0=J/M\),
\[
 \mathsf A_M^g=(M-1)^gE_0+(-1)^g(I-E_0),\quad
 \tr(E_0P_\sigma)=1,\quad
 \tr(P_\sigma)=\#\Fix(\sigma).
\]
Subtracting the gained count from the ordinary one gives
\[
 (-1)^g(M-\#\Fix(\sigma))=m_C>0,
\]
since \(g\) is even.
All other degree-\(g\) profiles are flat on their support or infeasible.
This proves the formula and, together with Lemma~\ref{lem:flat},
the polynomial equality assertion.
\end{proof}
This proposition does not assert nonnegativity of higher-degree
coefficients on an arbitrary bipartite base.

\begin{example}[One twisted square]\label{ex:square}
Let \(G=C_4=(v_0,v_1,v_2,v_3)\), with left side \(\{v_0,v_2\}\),
and take \(M=3\). Give the path \(v_0v_1v_2v_3\) identity transports
and the closing edge the transposition \(\tau=(12)\). The holonomy
image is \(\Gamma=\langle\tau\rangle\); it moves two labels, and
\(\kappa(\Gamma)=2\). Here are two profiles on this same cover:
\[
 \begin{array}{c|rrrr}
 (a_{v_0},a_{v_1},a_{v_2},a_{v_3})&P&Z&D&R\\\hline
 (1,1,1,1)&18&16&2&1\\
 (1,2,1,2)&3&1&2&1
 \end{array}
\]
For the singleton profile, fix the color at \(v_0\). There are eight
proper colorings of the identity path. Two end in the same color as
\(v_0\), and three end in each of the other two colors. The identity
closing edge therefore leaves six choices for each initial color,
giving \(P=18\). The twisted edge leaves five choices when the initial
color is 1 or 2, and six when it is 3. Thus \(Z=5+5+6=16\).

For \((1,2,1,2)\), disjointness on the path forces the two left sets
to be the same singleton and the two right sets to be its complement.
All three singletons work in the ordinary model, whereas the closing
edge requires \(\tau\) to fix that singleton. Only \(\{3\}\) works,
so \(P=3\) and \(Z=1\).
The reduced profiles on one label are respectively \((0,0,0,0)\) and
\((0,1,0,1)\); each has exactly one ordinary assignment, so \(R=1\).
Both coefficients attain \(D=\kappa R\). The second profile also shows
that the comparison concerns multiple selections in a fibre.
\end{example}

\section{Overlap kernels and elimination}\label{sec:networks}

For \(M\ge0\) and \(0\le p\le M\), the \emph{slice} \(\Om_p(M)\)
is the family of all \(p\)-subsets of \([M]\). In particular,
\([0]=\varnothing\) and \(\Om_0(0)=\{\varnothing\}\).
This convention makes the reduced model at \(M=2\) an ordinary
one-assignment model on zero labels. Set
\[
 \begin{aligned}
 \ell_{pq}&=(p+q-M)_+,\\
 r_{pq}&=\min(p,q)-\ell_{pq}=\min(p,q,M-p,M-q),\\
 z(A,B)&=|A\cap B|-\ell_{pq}.
 \end{aligned}
\]
All integers \(0,\ldots,r_{pq}\) occur as overlaps.
For \(r\ge0\), define the nonnegative Newton cone
\[
 \NN_r=\left\{f(z)=\sum_{d=0}^r c_d\binom zd:\ c_d\ge0\right\}.
\]
Its coefficients are \(\Delta^d f(0)\), where
\(\Delta f(z)=f(z+1)-f(z)\).

\begin{lemma}[Cone operations]\label{lem:cone}
The cones are closed under nonnegative sums, products, forward
differences, Pascal shifts, and integration with a nonnegative initial
value. Differences and shifts of a function on \(0,\ldots,r\) have
domain \(0,\ldots,r-1\); for \(r=0\) that domain is empty.
If \(f\in\NN_r\) is nonconstant, then \(r\ge1\),
\(\Delta f(r-1)>0\), and \(f(r)>0\).
\end{lemma}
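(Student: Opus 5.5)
The plan is to work entirely in the Newton basis \(\binom zd\), \(0\le d\le r\), and verify each closure property on the coefficient vectors \(c_d\). Throughout, a function on \(\{0,\dots,r\}\) has a unique Newton expansion, and its coefficients are \(c_d=\Delta^d f(0)\).

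\emph{Nonnegative sums} are immediate. \emph{Forward differences} use \(\Delta\binom zd=\binom z{d-1}\), so
\[
 \Delta f=\sum_{d\ge1}c_d\binom z{d-1},
\]
which has nonnegative coefficients on the domain \(0,\dots,r-1\). For \(r=0\) this domain is empty. I read the \emph{Pascal shift} as \(f\mapsto f(z+1)\). By Pascal's rule \(\binom{z+1}d=\binom zd+\binom z{d-1}\), the shifted coefficients are \(c_d+c_{d+1}\ge0\), again on \(0,\dots,r-1\). \emph{Integration} means \(F(0)=c\ge0\) with \(\Delta F=f\). Then
\[
 F=c+\sum_d c_d\binom z{d+1}\in\NN_{r+1}.
\]

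\emph{Products} are the only step requiring an identity. I will use the Vandermonde-type expansion
\[
 \binom zi\binom zj=\sum_{k}\binom{k}{i}\binom{i}{i+j-k}\binom zk,
 \qquad \max(i,j)\le k\le i+j,
\]
whose coefficients are nonnegative. It follows by choosing an \(i\)-subset and a \(j\)-subset of a \(z\)-set and classifying by their union, of size \(k\). On the domain \(\{0,\dots,r\}\), terms with \(k>r\) are not dropped but re-expressed. This is harmless: the product is a function on the domain, and its Newton coefficients are the finite differences \(\Delta^k(fg)(0)\) for \(k\le r\). The question is whether these are nonnegative. \(\Delta^k\) of a positive combination of \(\binom zm\) at \(0\) equals the coefficient at \(m=k\), so the truncation is exact: \(\Delta^k\binom zm(0)=[m=k]\). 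Hence nonnegativity persists, and this is the main point needing care.

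For the final claim, suppose \(f\) is nonconstant. Then some \(c_d>0\) with \(d\ge1\), which forces \(r\ge1\). Evaluate
\[
 \Delta f(r-1)=\sum_{d\ge1}c_d\binom{r-1}{d-1}.
\]
Every term is nonnegative, and each coefficient \(\binom{r-1}{d-1}\) with \(1\le d\le r\) is positive, so the sum is positive because some \(c_d>0\). Likewise
\[
 f(r)=\sum_d c_d\binom rd\ge c_d\binom rd>0,
\]
since \(\binom rd\ge1\) for \(d\le r\). I expect no real obstacle. The only delicate point is the product identity and the bookkeeping of domains after truncation, which the finite-difference characterization of coefficients settles.
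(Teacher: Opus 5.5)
Your proposal is correct and follows essentially the same route as the paper: Newton coefficients, Pascal's rule for differences and shifts, the hockey-stick formula for integration, and the union-size expansion of \(\binom zi\binom zj\) (your coefficient \(\binom ki\binom i{i+j-k}\) equals the paper's \(k!/((k-i)!(k-j)!(i+j-k)!)\)), followed by direct evaluation for the nonconstant claim. One small point of phrasing: the terms with \(k>r\) are not ``re-expressed''; \(\binom zk\) vanishes identically on \(\{0,\ldots,r\}\) for \(k>r\), so those terms can simply be dropped, which is exactly what your check \(\Delta^k\binom zm(0)=[m=k]\) confirms.
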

\begin{proof}
Nonnegative sums preserve the coefficients. Assume \(r\ge1\). If
\(f(z)=\sum_{d=0}^r c_d\binom zd\), Pascal's identity gives, on
\(0\le z\le r-1\),
\[
 \Delta f(z)=\sum_{d=0}^{r-1}c_{d+1}\binom zd,\qquad
 f(z+1)=\sum_{d=0}^{r-1}(c_d+c_{d+1})\binom zd.
\]
The omitted term \(c_r\binom zr\) vanishes on this shorter domain.
Conversely, if \(u\in\NN_{r-1}\) has coefficients \(b_d\ge0\) and
\(a\ge0\), its discrete integral is
\[
 a+\sum_{j=0}^{z-1}u(j)=a+\sum_{d=0}^{r-1}b_d\binom z{d+1}\in\NN_r.
\]
For products, count ordered pairs of subsets of sizes \(d,e\) according
to the size \(j\) of their union. This gives
\[
 \binom zd\binom ze
 =\sum_{j=\max(d,e)}^{d+e}
 \frac{j!}{(j-d)!(j-e)!(d+e-j)!}\binom zj.
\]
Each coefficient counts the ordered pair of subsets by its union,
and is nonnegative. Terms with \(j>r\) vanish on \(0,\ldots,r\), so
products lie in the same cone. If \(f\) is nonconstant, some
\(c_d>0\) with \(1\le d\le r\). Consequently
\[
 \Delta f(r-1)=\sum_{d=1}^r c_d\binom{r-1}{d-1}>0,
 \qquad f(r)=\sum_{d=0}^r c_d\binom rd>0.
\]
For \(r=0\) every function in the cone is a nonnegative constant;
there is no difference or shift value to evaluate.
\end{proof}

For radial functions on the appropriate overlap domains, write
\[
 (f*_M g)(A,B)=
 \sum_{T\in\Om_m(M)}f(z(A,T))g(z(T,B)).
\]
Simultaneous label permutations are transitive on pairs with a fixed
intersection size. Hence this sum is a function \(h_M(z(A,B))\),
without any assumption about its Newton coefficients.

\begin{lemma}[Two-label difference identity]\label{lem:serialdiff}
Let \(1\le p,m,q\le M-1\) and \(h_M=f*_M g\). On \(M-2\) labels
with every rank reduced by one,
\begin{equation}\label{eq:serialdiff}
 \Delta h_M(z(A',B'))=
 \sum_{T'\in\Om_{m-1}(M-2)}
 \Delta f(z(A',T'))\Delta g(z(T',B')).
\end{equation}
This identity holds for arbitrary real-valued radial inputs.
\end{lemma}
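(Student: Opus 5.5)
The plan is to realize both sides of \eqref{eq:serialdiff} inside the \(M\)-label model by adjoining two distinguished labels \(x,y\) to the reduced ground set, and then to expand the forward difference as a difference of two convolution sums whose terms cancel in pairs. First I check that the shifted overlaps are compatible with the reduction. With ranks lowered by one on \(M-2\) labels,
\[
 \ell'_{p-1,q-1}=\bigl((p-1)+(q-1)-(M-2)\bigr)_+=\ell_{pq},\qquad
 r'_{p-1,q-1}=r_{pq}-1,
\]
and the same holds for the pairs \((p,m)\) and \((m,q)\). Therefore \(z'(A',B')=|A'\cap B'|-\ell_{pq}\) ranges over \(0,\ldots,r_{pq}-1\), which is exactly the domain on which \(\Delta h_M\) is defined. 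The same remark makes \(\Delta f\) and \(\Delta g\) meaningful at the reduced arguments.

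Next, identify \([M]=[M-2]\sqcup\{x,y\}\). Fix \(A'\in\Om_{p-1}(M-2)\) and \(B'\in\Om_{q-1}(M-2)\), and put
\[
 A=A'\cup\{x\},\qquad B_1=B'\cup\{x\},\qquad B_0=B'\cup\{y\}.
\]
Then \(z(A,B_1)=z'(A',B')+1\) and \(z(A,B_0)=z'(A',B')\). Because \(h_M\) is radial, which is the transitivity remark preceding the lemma, this gives
\[
 \Delta h_M(z'(A',B'))=\sum_{T\in\Om_m(M)}f(z(A,T))\bigl[g(z(T,B_1))-g(z(T,B_0))\bigr].
\]

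Now split the sum according to \(T\cap\{x,y\}\).

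If \(T\) contains both of \(x,y\) or neither, then \(|T\cap B_1|=|T\cap B_0|\), so the bracket vanishes.

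The remaining terms have \(T=T'\cup\{x\}\) or \(T=T'\cup\{y\}\) with \(T'\in\Om_{m-1}(M-2)\). Write
\[
 a=|A'\cap T'|-\ell_{pm}=z'(A',T'),\qquad b=|T'\cap B'|-\ell_{mq}=z'(T',B').
\]
For \(T'\cup\{x\}\), the term is \(f(a+1)\bigl[g(b+1)-g(b)\bigr]\). For \(T'\cup\{y\}\), the term is \(f(a)\bigl[g(b)-g(b+1)\bigr]\). Adding the two terms gives \(\Delta f(a)\,\Delta g(b)\), and summing over \(T'\) yields \eqref{eq:serialdiff}.

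Only radiality and the cancellation are used, with no sign or Newton-cone hypothesis, so the identity holds for arbitrary real radial \(f,g\).

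The main point requiring care is the bookkeeping of domains. I must check that \(a\) and \(b\) stay in \(0,\ldots,r'-1\) whenever the corresponding term is nonzero. I must also check that the hypotheses \(1\le p,m,q\le M-1\) keep every reduced rank in the range \(0,\ldots,M-2\), so that the slices \(\Om_{p-1}(M-2)\), \(\Om_{m-1}(M-2)\) and \(\Om_{q-1}(M-2)\) are defined. This includes the case \(M=2\), where there is a single empty assignment under the stated conventions. Both checks follow from the computation of \(\ell'\) and \(r'\) above.
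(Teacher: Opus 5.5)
Your proposal is correct and is essentially the paper's proof: you adjoin two labels, set \(A=A'\cup\{x\}\), compare \(B'\cup\{x\}\) with \(B'\cup\{y\}\), note that middle sets containing both or neither new label cancel, and pair \(T'\cup\{x\}\) with \(T'\cup\{y\}\) to obtain \(\Delta f(a)\,\Delta g(b)\). The domain check you flag is automatic: by your computation of \(\ell'\) and \(r'\), the reduced overlaps always lie in \(0,\ldots,r_{pm}-1\) and \(0,\ldots,r_{mq}-1\), so the ``whenever the term is nonzero'' qualification is unnecessary.
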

\begin{proof}
Removing two labels and reducing each rank by one preserves each
minimum-overlap shift and reduces each radius by one. For example,
\[
 \ell^{(M-2)}_{p-1,q-1}=\ell^{(M)}_{pq},\qquad
 r^{(M-2)}_{p-1,q-1}=r^{(M)}_{pq}-1.
\]
Name the removed labels \(i,j\). Put \(A=A'\cup\{i\}\) and compare
\(B'\cup\{i\}\) with \(B'\cup\{j\}\) in the original convolution.
Their shifted overlaps with \(A\) are \(z(A',B')+1\) and \(z(A',B')\).
Middle sets containing both or neither removed label cancel.
The pair \(T'\cup\{i\},T'\cup\{j\}\) contributes
\[
 f(x+1)g(y+1)+f(x)g(y)-f(x+1)g(y)-f(x)g(y+1)
 =\Delta f(x)\Delta g(y),
\]
where \(x,y\) are the reduced overlaps. Every reduced output overlap
occurs, including the unique one when \(M-2=0\).
Each middle-set pair is counted once, so no slice-cardinality or
probability factor appears. The argument uses no serial-closure premise.
\end{proof}

\begin{lemma}[Serial closure]\label{lem:serial}
For \(M\ge0\), \(0\le p,m,q\le M\),
\(f\in\NN_{r_{pm}}\), and \(g\in\NN_{r_{mq}}\),
the convolution \(h_M=f*_M g\) belongs to \(\NN_{r_{pq}}\).
This includes all endpoint and middle ranks, including zero and \(M\).
If all three ranks are interior and both inputs are nonconstant,
then \(h_M\) is nonconstant.
\end{lemma}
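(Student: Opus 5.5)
We prove membership in \(\NN_{r_{pq}}\) by checking that every iterated forward difference \(\Delta^d h_M(0)\) is nonnegative, for \(0\le d\le r_{pq}\). The tool is Lemma~\ref{lem:serialdiff}, applied repeatedly. Each application removes two labels and lowers every rank by one, so after \(d\) steps we work on \(M-2d\) labels with ranks \(p-d,m-d,q-d\), and
\[
 \Delta^d h_M=\Delta^d f *_{M-2d}\Delta^d g .
\]
At each step we must check the interiority hypothesis \(1\le p,m,q\le M-1\) of Lemma~\ref{lem:serialdiff} for the current ranks.

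\textbf{Nonnegativity of the differences.} By Lemma~\ref{lem:cone}, \(\Delta^d f\in\NN_{r_{pm}-d}\) and \(\Delta^d g\in\NN_{r_{mq}-d}\). In particular both are nonnegative on their domains, so the convolution is a sum of nonnegative terms. Evaluating at reduced overlap \(0\) gives \(\Delta^d h_M(0)\ge0\).

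\textbf{Interiority, and the case where it fails.} The iteration needs the reduced radii to stay nonnegative, and \(r_{pq}\le\) every relevant quantity is not automatic. I would proceed as follows. If the middle rank satisfies \(m-d\in\{0,M-2d\}\) after some steps, the middle slice is a single set. The convolution then collapses to a product \(f(z(A,T_0))g(z(T_0,B))\) with \(T_0\) fixed. This product is constant in \(z(A,B)\) up to the radius; more precisely the relevant radii vanish, so both factors are constants. If instead \(p-d\) or \(q-d\) hits \(0\) or \(M-2d\), then \(r_{pq}-d=0\), so \(\Delta^d h\) is only evaluated at a single point. Its value is a sum of nonnegative terms, which is fine.

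\textbf{Degree bound, the main obstacle.} The delicate point is that \(h_M\) must lie in \(\NN_{r_{pq}}\) and not merely have nonnegative differences up to some larger degree. This is automatic, since \(h_M\) is a function on \(0,\dots,r_{pq}\), so its Newton expansion stops at \(r_{pq}\). The real issue is whether Lemma~\ref{lem:serialdiff} can be applied \(r_{pq}\) times. That holds once I note that \(d<r_{pq}\) keeps \(p-d,q-d\) interior. If \(m-d\) becomes non-interior earlier, i.e.\ \(r_{pm}\) or \(r_{mq}\) is smaller than \(r_{pq}\), then the corresponding input difference has empty domain beyond its radius. The intended reading is that the next difference is zero. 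I would verify this directly: once the middle slice is a single set, the reduced \(h\) is constant, so its further differences vanish.

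\textbf{Nonconstancy.} Assume \(p,m,q\) are interior and \(f,g\) are nonconstant. One application of the identity with \(d=1\) shows
\[
 \Delta h_M(r_{pq}-1)=\sum_{T'}\Delta f\,\Delta g,
\]
a sum of nonnegative terms. I would choose \(T'\) so that both reduced overlaps are maximal, namely \(r_{pm}-1\) and \(r_{mq}-1\). At those overlaps Lemma~\ref{lem:cone} gives \(\Delta f>0\) and \(\Delta g>0\), so the term is positive. What needs checking is that such a \(T'\) exists in \(\Om_{m-1}(M-2)\) when \(A',B'\) are at maximal overlap. This is a small combinatorial check: take \(T'\) nested between or around \(A'\) and \(B'\) as ranks allow. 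It is the step I would verify most carefully.
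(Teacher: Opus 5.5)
Your proposal is correct and is essentially the paper's proof. The paper runs a strong induction on \(M\): it applies Lemma~\ref{lem:serialdiff} once and then uses discrete integration, which is your iterated identity \(\Delta^d h_M=\Delta^d f*_{M-2d}\Delta^d g\) unrolled, with the same two degenerate cases (an endpoint rank at \(0\) or \(M\) gives a one-point domain, a middle rank at \(0\) or \(M\) gives a nonnegative constant). For nonconstancy, the check you flagged is settled exactly as you guessed: take nested prefixes \(A'=[p-1]\), \(T'=[m-1]\), \(B'=[q-1]\) in one ordering of the \(M-2\) labels, which realize all three maximal shifted overlaps at once.
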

\begin{proof}
Induct strongly on \(M\). If an endpoint rank is zero or \(M\),
the output overlap domain is a singleton, so radiality and nonnegativity
prove the assertion. If \(m\) is zero or \(M\), its slice has one set,
and both input overlaps are fixed; the convolution is a nonnegative
constant on the output domain. These cases include \(M=0,1\).

Otherwise all three ranks are interior and \(r=r_{pq}\ge1\).
Lemma~\ref{lem:cone} puts \(\Delta f,\Delta g\) in the corresponding
cones on \(M-2\) labels. The induction hypothesis and
Lemma~\ref{lem:serialdiff} give
\[
 \Delta h_M=(\Delta f)*_{M-2}(\Delta g)
 =\sum_{d=0}^{r-1}c_d\binom zd,\qquad c_d\ge0.
\]
The defining finite sum gives \(h_M(0)\ge0\). Discrete integration yields
\[
 h_M(z)=h_M(0)+\sum_{d=0}^{r-1}c_d\binom z{d+1}\in\NN_r.
\]
Thus the same induction covers every overlap-shift regime.
For nonconstant inputs choose nested reduced prefixes of sizes
\(p-1,m-1,q-1\), namely \(A'=[p-1]\), \(T'=[m-1]\),
and \(B'=[q-1]\), in one ordering of the \(M-2\) labels.
Their shifted overlaps are the last values of their respective domains.
Thus the summand at \(T'\) in \eqref{eq:serialdiff} is positive by
Lemma~\ref{lem:cone}, while all other summands are nonnegative.
Equation~\eqref{eq:serialdiff} therefore makes the last output
difference positive, also when \(M-2=0\).
\end{proof}

\begin{lemma}[Diagonal-exact split]\label{lem:split}
For \(f,g\in\NN_r\), define \(F,G\) by
\[
 \begin{aligned}
 F(0)&=f(0)g(0),&G(0)&=0,\\
 \Delta F(k)&=\Delta f(k)g(k+1),&
 \Delta G(k)&=f(k)\Delta g(k).
 \end{aligned}
\]
Then \(F,G\in\NN_r\) and
\begin{equation}\label{eq:split}
 f(x)g(y)\le F(x)+G(y),\quad F(z)+G(z)=f(z)g(z),\quad
 \Delta F(z)\ge\Delta f(z)\Delta g(z).
\end{equation}
If both inputs are nonconstant, \(F\) is nonconstant.
\end{lemma}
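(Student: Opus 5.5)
The plan is to construct \(F\) and \(G\) by discrete integration, read off cone membership from Lemma~\ref{lem:cone}, and obtain every inequality from the discrete product rule together with monotonicity of cone elements. If \(r=0\), then \(f,g\) are nonnegative constants, \(F=fg\), \(G=0\), and every claim is immediate; there are no difference values, and the nonconstant clause is vacuous. So assume \(r\ge1\).

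\emph{Cone membership.} By Lemma~\ref{lem:cone}, \(\Delta f\in\NN_{r-1}\), and the Pascal shift \(k\mapsto g(k+1)\) lies in \(\NN_{r-1}\). Their product lies in \(\NN_{r-1}\) by product closure. Integrating this product with the initial value \(f(0)g(0)\ge0\) gives \(F\in\NN_r\). For \(G\), note that \(f\) restricted to \(0,\ldots,r-1\) lies in \(\NN_{r-1}\), because the term \(c_r\binom zr\) vanishes there. Hence \(f\cdot\Delta g\in\NN_{r-1}\), and integrating with initial value \(0\) gives \(G\in\NN_r\).

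\emph{Monotonicity.} I will use one observation repeatedly: every element of a cone \(\NN_s\) is nonnegative and nondecreasing on its domain. Nonnegativity holds because each \(\binom zd\ge0\). Monotonicity holds because the difference of a cone element again lies in a cone, so it is nonnegative.

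\emph{Diagonal identity.} The discrete product rule gives
\[
\Delta(fg)(k)=\Delta f(k)\,g(k+1)+f(k)\,\Delta g(k)=\Delta F(k)+\Delta G(k).
\]
Since \(F(0)+G(0)=f(0)g(0)\), the two functions \(F+G\) and \(fg\) agree at \(0\) and have the same differences, so \(F(z)+G(z)=f(z)g(z)\) on \(0,\ldots,r\).

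\emph{Off-diagonal inequality.} This is the only step needing care, and I split by the order of \(x\) and \(y\).
\begin{itemize}
\item If \(x\le y\), write \(F(x)+G(y)=f(x)g(x)+\sum_{k=x}^{y-1}f(k)\Delta g(k)\). Using \(f(k)\ge f(x)\) and \(\Delta g(k)\ge0\), this is at least \(f(x)g(x)+f(x)\bigl(g(y)-g(x)\bigr)=f(x)g(y)\).
\item If \(x>y\), write \(F(x)+G(y)=f(y)g(y)+\sum_{k=y}^{x-1}\Delta f(k)g(k+1)\). Using \(g(k+1)\ge g(y)\) and \(\Delta f(k)\ge0\), this is at least \(f(y)g(y)+g(y)\bigl(f(x)-f(y)\bigr)=f(x)g(y)\).
\end{itemize}

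\emph{Difference bound.} We have \(\Delta F(z)=\Delta f(z)\,g(z+1)=\Delta f(z)\bigl(g(z)+\Delta g(z)\bigr)\). Since \(g(z)\ge0\) and \(\Delta f(z)\ge0\), this is at least \(\Delta f(z)\Delta g(z)\).

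\emph{Nonconstancy.} If \(f\) and \(g\) are nonconstant, Lemma~\ref{lem:cone} gives \(\Delta f(r-1)>0\) and \(g(r)>0\). Hence \(\Delta F(r-1)=\Delta f(r-1)\,g(r)>0\), so \(F\) is nonconstant.
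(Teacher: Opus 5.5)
Your proposal is correct and follows essentially the same route as the paper. It establishes cone membership via Lemma~\ref{lem:cone} and integration, gets the diagonal identity from the discrete product rule, proves the off-diagonal bound by telescoping from the diagonal at \(\min(x,y)\) using monotonicity, and shows nonconstancy through \(\Delta F(r-1)=\Delta f(r-1)g(r)>0\); the only cosmetic difference is that you bound the telescoped sums directly, whereas the paper writes the slack as an explicit sum of nonnegative terms.
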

\begin{proof}
Assume first \(r\ge1\). By Lemma~\ref{lem:cone}, the functions
\(\Delta f\), \(\Delta g\), \(g(z+1)\), and the restriction of
\(f\) all lie in \(\NN_{r-1}\). Product closure puts \(\Delta F\)
and \(\Delta G\) in that cone. Their nonnegative initial values and
integration then put \(F,G\) in \(\NN_r\).
The discrete product rule is
\[
 \Delta(fg)(k)=\Delta f(k)g(k+1)+f(k)\Delta g(k)
              =\Delta F(k)+\Delta G(k).
\]
Since \(F(0)+G(0)=f(0)g(0)\), summing these differences proves
\(F(z)+G(z)=f(z)g(z)\) at every point.

To compare different arguments, subtract \(f(x)g(y)\) and use this
diagonal identity at \(y\) when \(x\ge y\), or at \(x\) when
\(x\le y\). The resulting slack is
\[
 F(x)+G(y)-f(x)g(y)=
 \begin{cases}
 \displaystyle\sum_{k=y}^{x-1}\Delta f(k)(g(k+1)-g(y)),&x\ge y,\\[3pt]
 \displaystyle\sum_{k=x}^{y-1}(f(k)-f(x))\Delta g(k),&x\le y.
 \end{cases}
\]
Every summand is nonnegative, because cone functions are nondecreasing.
Moreover \(g(z+1)=g(z)+\Delta g(z)\ge\Delta g(z)\), giving the
difference bound. If both inputs are nonconstant,
\(\Delta F(r-1)=\Delta f(r-1)g(r)>0\).
At \(r=0\), the only arguments are \(x=y=0\), and the asserted
identities follow from the initial values. These arguments include zero
functions and never divide by a kernel value.
\end{proof}

\subsection{Gain-decorated kernel lists}
A network consists of a finite simple graph \(H\), a slice
\(\Om_{p_v}(M)\) at each vertex, a scalar \(s\ge0\), and a finite list
\(\mathcal L_{uv}=((f_i,\sigma_i))_i\) at every oriented edge \(uv\).
Here \(f_i\in\NN_{r_{p_up_v}}\), and \(\sigma_i\) maps labels at \(v\)
to labels at \(u\). Choose one orientation of each undirected edge for
all products below. Define
\[
 K_{uv}(A,B)=\sum_i f_i(z(A,\sigma_iB)),\qquad
 \overline K_{uv}(A,B)=\sum_i f_i(z(A,B)).
\]
Reversing an edge inverts each map and transposes the kernel:
\(z(A,\sigma B)=z(B,\sigma^{-1}A)\). Thus neither network value
depends on the chosen orientations. An empty list is zero.
The list is part of the data: the companion \(\overline K\) need not
be determined by the evaluated gained matrix \(K\).
The two network values are
\[
 \mathcal Z=s\sum_{(A_v)}\prod_{uv}K_{uv}(A_u,A_v),\qquad
 \mathcal P=s\sum_{(A_v)}\prod_{uv}\overline K_{uv}(A_u,A_v).
\]

A tree decomposition of a graph is a tree of bags covering every vertex
and edge, such that the bags containing a vertex form a connected subtree.
Its width is the largest bag size minus one; treewidth is the minimum width.

\begin{lemma}[Low-degree elimination]\label{lem:lowdegree}
Every nonempty graph of treewidth at most \(w\) has a vertex of degree
at most \(w\). Deletion and edge contraction preserve the width bound.
In particular, smoothing a degree-two vertex preserves treewidth at most two.
\end{lemma}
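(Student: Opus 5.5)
The plan is to prove the three assertions of Lemma~\ref{lem:lowdegree} separately and directly from the definition of a tree decomposition. Each uses a width-\(w\) decomposition \((T,(X_t)_{t\in T})\) with all bags of size at most \(w+1\).

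\emph{Low-degree vertex.} I will argue by strong induction on the number of tree nodes, starting from an arbitrary decomposition of width at most \(w\) of a nonempty graph.
\begin{enumerate}
\item If \(T\) has one node, then every vertex lies in a single bag of size at most \(w+1\), so every degree is at most \(w\).
\item Otherwise take a leaf \(t\) with neighbour \(t'\). If \(X_t\subseteq X_{t'}\), delete \(t\): this gives a smaller decomposition of the same graph and the induction applies.
\item If instead some \(v\in X_t\setminus X_{t'}\) exists, connectivity of the subtree of bags containing \(v\) forces \(v\) to lie only in \(X_t\).
\item Every edge at \(v\) must then be covered by \(X_t\), so \(\deg v\le|X_t|-1\le w\).
\end{enumerate}

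\emph{Deletion.} Deleting a vertex \(v\) is handled by removing \(v\) from every bag. Subtrees of the other vertices are unchanged, and every remaining edge is still covered. Deleting an edge needs no change to the decomposition.

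\emph{Contraction.} To contract \(uv\) into a new vertex \(x\), I replace every occurrence of \(u\) or \(v\) by \(x\). Bag sizes do not grow. The bags containing \(x\) form the union of the \(u\)-subtree and the \(v\)-subtree, and this union is connected because the two subtrees meet at a bag covering \(uv\). An edge \(xy\) comes from \(uy\) or \(vy\), so some bag contains \(x\) and \(y\). If contraction creates parallel edges, we pass to the underlying simple graph, which does not affect the decomposition.

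\emph{Smoothing.} Smoothing a degree-two vertex \(v\) with neighbours \(a,b\) is precisely contracting the edge \(va\): the result is the graph with \(v\) removed and \(ab\) added, up to merging a parallel edge. The contraction case therefore covers it.

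The only delicate step is the leaf argument in the low-degree part, namely deleting redundant leaves until a leaf bag with a private vertex appears. This is the step where the finiteness induction must be organised with care.
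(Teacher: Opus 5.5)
Your proof is correct and follows the paper's own argument. You prune leaf bags contained in their neighbour until a leaf bag has a private vertex (or only one bag remains), you merge the two occurrence subtrees through a bag covering the contracted edge, and you treat smoothing as contraction of one edge at the degree-two vertex followed by discarding a duplicate edge; beyond the paper, you also spell out the deletion case, which the paper leaves implicit.
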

\begin{proof}
Repeatedly delete a leaf bag contained in its neighboring bag.
In a remaining leaf bag choose a vertex absent from the neighbor; all
its neighbors lie in that bag. If only one bag remains, choose any vertex.
For contraction replace the two endpoints in every bag by the merged
vertex. Their occurrence subtrees meet in a bag covering the contracted
edge, so the merged occurrence set is connected.
Smoothing is contraction followed by deletion of any duplicate edge.
\end{proof}
This last assertion concerns the underlying simple graph. In a kernel
network, the old and smoothed edge factors are both retained in the
weight; the split below handles their product rather than discarding
either constraint.

\begin{proposition}[Network comparison]\label{prop:network}
Every gain-decorated kernel-list network of treewidth at most two
satisfies \(\mathcal Z\le\mathcal P\).
\end{proposition}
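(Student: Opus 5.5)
Induction on the number of vertices of \(H\), eliminating at each step a vertex of degree at most two, which exists by Lemma~\ref{lem:lowdegree}. The induction hypothesis is the full statement \(\mathcal Z\le\mathcal P\) for every gain-decorated kernel-list network of treewidth at most two with fewer vertices, with arbitrary scalar \(s\ge0\) and arbitrary lists. Before eliminating, I would make the network multiplicative on each edge: \(\prod_i\) of list sums expands into a sum of products, but it is simpler to keep a single list per oriented edge and have the elimination step produce a list again.

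\emph{Base and degree \(\le1\).} If \(H\) is empty, both values equal \(s\). If \(v\) has degree zero, summing out \(v\) multiplies both values by \(|\Om_{p_v}(M)|\), which is absorbed into \(s\). If \(v\) has degree one, with neighbour \(u\), fix \(A_u\) and sum over \(A_v\). Each term \(f_i(z(A_u,\sigma_iA_v))\), as \(A_v\) ranges over the slice, has a sum independent of \(A_u\) and \(\sigma_i\), because the label group acts transitively and the sum is radial. The gained and companion sums therefore coincide and equal a nonnegative constant, which is absorbed into \(s\).

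\emph{Degree two.} Let \(v\) have neighbours \(u,w\), with lists \(((f_i,\sigma_i))\) on \(uv\) and \(((g_j,\tau_j))\) on \(vw\). Summing \(A_v\) gives
\[\sum_{i,j}\sum_{T}f_i(z(A_u,\sigma_iT))g_j(z(T,\tau_jA_w))=\sum_{i,j}h_{ij}(z(A_u,\sigma_i\tau_jA_w)),\]
after substituting \(T\mapsto\sigma_i^{-1}T\) and using label invariance, where \(h_{ij}=f_i*_Mg_j\). By Lemma~\ref{lem:serial}, \(h_{ij}\in\NN_{r_{p_up_w}}\). The companion computation gives the same \(h_{ij}\) evaluated at \(z(A_u,A_w)\).

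The new edge \(uw\) thus has list \(((h_{ij},\sigma_i\tau_j))\), and this step is exact for both \(\mathcal Z\) and \(\mathcal P\). If \(uw\) was not already an edge, the resulting network is the smoothing of \(H\), which has treewidth at most two by Lemma~\ref{lem:lowdegree}. Induction then applies directly.

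\emph{Main obstacle: parallel edge.} If \(uw\) already carries a list \(((g'_k,\rho_k))\), the new weight is a product of two list-sums, \(K(A_u,A_w)K'(A_u,A_w)\), which is not a list-sum. Expanding gives terms \(f(z(A,\sigma B))\,g(z(A,\rho B))\) with different transports. For each pair, I would apply Lemma~\ref{lem:split} to get
\[f(x)g(y)\le F(x)+G(y),\qquad x=z(A,\sigma B),\ y=z(A,\rho B).\]
Replacing each product term by the list entries \((F,\sigma)\) and \((G,\rho)\) yields a simple-edge network \(\mathcal N'\) with \(\mathcal Z\le\mathcal Z(\mathcal N')\), since all other factors are nonnegative. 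In the companion, both arguments equal \(z(A,B)\), so the diagonal identity \(F(z)+G(z)=f(z)g(z)\) gives \(\mathcal P=\mathcal P(\mathcal N')\) exactly. This is precisely why the data must be kept as lists rather than evaluated kernels.

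The resulting graph is \(H-v\), whose treewidth is at most two. The induction hypothesis gives \(\mathcal Z(\mathcal N')\le\mathcal P(\mathcal N')=\mathcal P\).

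The delicate points are twofold. First, the degree-one and degree-two sums must be treated as radial after transport, which is handled by the change of variables in the slice. Second, the split must be applied term by term, so that \(\mathcal P\) is preserved exactly while \(\mathcal Z\) can only increase.
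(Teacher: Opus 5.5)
Your proposal is correct and matches the paper's proof step for step. Isolated-vertex factors and leaf row sums agree in both models by slice transitivity and are absorbed into the scalar. A degree-two vertex is eliminated exactly by serial convolution (Lemma~\ref{lem:serial}), and a parallel edge is handled by applying the diagonal-exact split (Lemma~\ref{lem:split}) to each entry pair, which can only raise $\mathcal Z$ and leaves $\mathcal P$ unchanged. Phrasing this as induction on the number of vertices, rather than as the preserved invariants $Z_0\le\mathcal Z_j$, $P_0=\mathcal P_j$, is only a cosmetic difference.
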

\begin{proof}
Let the initial values be \(Z_0,P_0\). Eliminate vertices of degree at
most two while preserving
\begin{equation}\label{eq:invariants}
 Z_0\le\mathcal Z_j,\qquad P_0=\mathcal P_j.
\end{equation}
An isolated vertex is removed with scalar factor \(\binom M{p_v}\),
exactly in both models. A leaf is removed with its row sum. Each list
term has a row sum independent of the surviving set by slice transitivity,
and independent of its map by relabeling the summed set. Thus gained
and companion row sums agree, including when they vanish.

At a degree-two vertex with neighbors \(u,w\), orient maps along
\(u\to v\to w\) in the kernel convention. Expand the two lists and sum
the middle set. For maps \(\sigma,\tau\), substituting \(T=\sigma A_v\)
gives explicitly
\[
 \begin{aligned}
 &\sum_{A_v\in\Om_{p_v}(M)}
 f(z(A_u,\sigma A_v))g(z(A_v,\tau A_w))\\
 &\qquad=\sum_{T\in\Om_{p_v}(M)}
 f(z(A_u,T))g(z(T,\sigma\tau A_w))
 =h(z(A_u,\sigma\tau A_w)).
 \end{aligned}
\]
Lemma~\ref{lem:serial} puts this \(h\) in
\(\NN_{r_{p_up_w}}\). Its ordinary companion is the same convolution
with identity map. Expanding all entry pairs therefore produces a new
list of valid cone kernels, exactly in both models.
If \(uw\) already exists, expand the product of its list and the new
serial list. For each entry pair \((h,\sigma),(g,\tau)\),
Lemma~\ref{lem:split} replaces the product by the list entries
\((F,\sigma),(G,\tau)\). The gained kernel increases pointwise, while
the companion is unchanged because \(F+G=hg\) on the diagonal.
Indeed, for fixed endpoint sets put \(x=z(A_u,\sigma A_w)\) and
\(y=z(A_u,\tau A_w)\). The gained product is \(h(x)g(y)\), bounded
by \(F(x)+G(y)\). In the companion both arguments are \(z(A_u,A_w)\),
where the replacement is exact. Summing over all entry pairs and
endpoint assignments therefore gives
\(\mathcal Z_{j+1}\ge\mathcal Z_j\) and
\(\mathcal P_{j+1}=\mathcal P_j\).
The new list has at most twice the product of the old list lengths;
parallel contributions are stored on one simple edge.
All other factors are nonnegative, so \eqref{eq:invariants} persists.

Each step removes a vertex and preserves width by
Lemma~\ref{lem:lowdegree}. At termination both values equal the final
scalar. This proves the comparison without positivity or division
assumptions.
\end{proof}

For a feasible bipartite edge, \(p_u\le p_v\) in inclusion coordinates,
and its constraint is the cone factor
\begin{equation}\label{eq:indicator}
 \ind{\alpha_{uv}A_u\subseteq A_v}
 =\binom{z(A_u,\alpha_{uv}^{-1}A_v)}{r_{p_up_v}}.
\end{equation}
Indeed maximum overlap means inclusion; the highest binomial basis is
one there and zero elsewhere. Proposition~\ref{prop:network} proves the
width-two comparison in Theorem~\ref{thm:A}.

\section{Sharp deficits on treewidth two}\label{sec:width}

The essential terminal case is a collision of two different transports,
even when each reduced branch is flat.
After smoothing a vertex, a serial kernel \((h,\sigma)\) and an old
edge kernel \((g,\tau)\) are multiplied at the same endpoints.
If retaining either transport makes the remaining graph flat but the
original is nonflat, then \(\sigma\ne\tau\) and this edge is a bridge.
Deleting it leaves two flat components:
\[
 \underbrace{\text{flat component rooted at }u}_{\text{constant }c_u}
 \quad
 \mathop{\longleftrightarrow}^{h,\sigma}_{g,\tau}
 \quad
 \underbrace{\text{flat component rooted at }w}_{\text{constant }c_w}.
\]
The two displayed kernels are multiplied, not added.
Their relative transport gives the local deficit in the following lemma.

The spectral input is a common nonnegative decreasing scalar sequence
for all cone kernels. Its monotonicity produces nonnegative weights for
cumulative moved-subset counts.

\begin{proposition}[Common decreasing slice scalars]\label{prop:scalars}
Fix interior ranks \(p,q\), put \(r=r_{pq}\), and give
\(V_k=\mathbb R^{\Om_k(M)}\) its counting inner product.
There are permutation-invariant spaces \(\mathcal H_i\) of dimension
\(\binom Mi-\binom M{i-1}\), and equivariant isometries
\(J_i^{(p)}:\mathcal H_i\to V_p\),
\(J_i^{(q)}:\mathcal H_i\to V_q\), with orthogonal ranges for distinct
\(i\), such that every \(f\in\NN_r\) has kernel
\[
 K_f(A,B)=f(z(A,B)),\qquad
 K_f=\sum_{i=0}^r k_i(f)J_i^{(p)}(J_i^{(q)})^*,
\]
where
\[
 k_0(f)\ge k_1(f)\ge\cdots\ge k_r(f)\ge0.
\]
The isometries are common to all \(f\) for the fixed pair of ranks.
\end{proposition}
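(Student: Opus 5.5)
The plan is to use the standard multiplicity-free decomposition of slices of the Boolean cube, and then compute the scalars of the Newton basis kernels \(\binom zd\) explicitly. \textbf{Reduction to \(\ell_{pq}=0\).} If \(p+q>M\), complementation \(A\mapsto[M]\setminus A\) is a permutation-equivariant isometry \(V_p\to V_{M-p}\). It satisfies \(z(A,B)=|A\cap B|-(p+q-M)=|A^c\cap B^c|\), the shifted overlap for ranks \(M-p,M-q\), whose minimum overlap is zero. The radius is unchanged. Composing the isometries \(J_i\) obtained for the complemented ranks with complementation therefore gives the statement for \(p,q\). So assume \(p+q\le M\), hence \(z=|A\cap B|\) and \(r=\min(p,q)\).

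\textbf{The spaces and isometries.} Let \(W_{d,k}\) be the inclusion matrix from \(V_k\) to \(V_d\) (\(d\le k\)), and let \(\partial_k=W_{k-1,k}\). Put \(\mathcal H_i=\ker\partial_i\subseteq V_i\), which has dimension \(\binom Mi-\binom M{i-1}\). For \(i\le k\le M-i\), the map \(W_{i,k}^{*}\) restricted to \(\mathcal H_i\) is a nonzero scalar multiple of an isometry. Indeed \(W_{i,k}W_{i,k}^*\) commutes with the action and preserves the irreducible \(\mathcal H_i\), so by Schur it is a scalar there. Define \(J^{(k)}_i\) to be that normalized map. By Schur's lemma the ranges for distinct \(i\) are orthogonal, since they are pairwise nonisomorphic irreducibles (Specht modules \(S^{(M-i,i)}\)). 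They exhaust \(V_k\) for \(i\le\min(k,M-k)\); this is the classical dimension count \(\binom Mk=\sum_i(\binom Mi-\binom M{i-1})\). Since \(V_p\) and \(V_q\) are multiplicity free and share exactly the types \(i\le r\), every equivariant map \(V_q\to V_p\) equals \(\sum_{i\le r}k_iJ_i^{(p)}(J_i^{(q)})^*\). Radial kernels are such maps. The isometries do not depend on \(f\), which gives the common-isometry claim.

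\textbf{Scalars of the Newton basis.} I will show that each \(k_i(\binom zd)\) is nonnegative and nonincreasing in \(i\). The cone is closed under nonnegative combinations, so this will finish the proof. Counting \(d\)-subsets of \(A\cap B\) gives
\[
 \binom{|A\cap B|}{d}=\bigl(W_{d,p}^{*}W_{d,q}\bigr)(A,B).
\]
On type \(i>d\) this kernel vanishes, because \(W_{d,q}\) kills the copy of \(\mathcal H_i\) in \(V_q\). That copy factors through \(\mathcal H_i\), and lowering to level \(d<i\) passes through \(\partial_i\). For \(i\le d\), the scalar is the product of the singular values of \(W_{d,p}\) and \(W_{d,q}\) on type \(i\). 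I will compute these with the classical identity: for \(h\in\mathcal H_i\),
\[
 W_{d,k}W_{d,k}^{*}h=\binom{k-i}{d-i}\binom{M-d-i}{k-d}h
\]
on the type-\(i\) component. This is obtained by repeatedly using the commutation \(\partial U-U\partial=(M-2j)\)-type relations, or by a direct count. Hence
\[
 k_i\bigl(\tbinom zd\bigr)=\Bigl[\tbinom{p-i}{d-i}\tbinom{M-d-i}{p-d}\tbinom{q-i}{d-i}\tbinom{M-d-i}{q-d}\Bigr]^{1/2}
 \quad(i\le d),
\]
with an overall sign that is nonnegative once the \(J\)'s are normalized by the positive square roots. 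Each factor is nonincreasing in \(i\), so the whole sequence is nonnegative and nonincreasing, with value \(0\) for \(i>d\).

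\textbf{Expected obstacle.} The main difficulty is the precise singular-value identity for \(W_{d,k}\) on \(\mathcal H_i\), with the counting normalization and the condition \(k\le M-i\). One must also check that the sign and phase choices make \(J_i^{(p)}\) and \(J_i^{(q)}\) compatible, so that every \(k_i\) comes out nonnegative rather than only in absolute value. I would first verify the identity by induction on \(k-d\), using \(W_{d,k}=\tfrac{1}{\binom{k-d}{1}}W_{d,k-1}W_{k-1,k}\)-style factorizations. The monotonicity of the factors is then immediate.
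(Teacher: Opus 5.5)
Your proposal is correct and ends at exactly the paper's scalars. Your $k_i(\binom zd)$ is the paper's $\beta_i(d;p,q)=\sqrt{A_i(M;p,d)A_i(M;q,d)}$ with $A_i(M;k,d)=\binom{k-i}{d-i}\binom{M-d-i}{k-d}$. Your complementation reduction is the same as the paper's. Your factorwise monotonicity is the paper's ratio bound $A_{i+1}/A_i=\frac{(d-i)(M-k-i)}{(k-i)(M-d-i)}\le 1$, split into its two factors.

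You differ in how you obtain the block structure. You get orthogonality of the lifted copies, exhaustion of $V_k$, and the scalar-block form of every radial kernel from Schur's lemma. This imports that the $\mathcal H_i$ are irreducible, pairwise nonisomorphic Specht modules $S^{(M-i,i)}$, each occurring once in $V_k$ (Young's rule). Over $\mathbb R$ you also need absolute irreducibility for Schur's lemma to give scalars.

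The paper deliberately avoids any multiplicity-one input. From the commutator identity $\partial_{k+1}U_k-U_{k-1}\partial_k=(M-2k)I$ it computes the norm multiplier $\binom{M-2i}{k-i}$ of the lift $E_{k,i}$ on $\mathcal H_i$. It proves orthogonality of distinct blocks by moving up operators across as down operators, and gets exhaustion by telescoping dimensions. It reads off the block action of inclusion maps from the counting identity $W_{k,d}E_{d,i}=\binom{k-i}{d-i}E_{k,i}$. Your route is shorter and explains conceptually why every equivariant map $V_q\to V_p$ is block-diagonal. The paper's route is elementary and self-contained, which suits its Lean formalization.

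The same counting identity also settles both items of your ``expected obstacle''. In your notation it reads $W_{d,k}^{*}W_{i,d}^{*}=\binom{k-i}{d-i}W_{i,k}^{*}$. Since your $J_i^{(k)}$ are positive multiples of $W_{i,k}^{*}|_{\mathcal H_i}$, it shows that $W_{d,k}^{*}J_i^{(d)}$ is a nonnegative multiple of $J_i^{(k)}$, so no sign ambiguity arises. Combined with the single norm multiplier $\binom{M-2i}{k-i}$, it gives the squared singular value $\binom{k-i}{d-i}^2\binom{M-2i}{k-i}/\binom{M-2i}{d-i}=\binom{k-i}{d-i}\binom{M-d-i}{k-d}$ directly, without an induction on $k-d$.
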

\begin{proof}
Appendix~\ref{app:spectral} constructs the harmonic spaces and
counting-normalized lifts, derives the scalars of the binomial basis
kernels, and proves their monotonicity. Complementing both endpoint sets
handles a positive overlap shift without a normalization factor.
\end{proof}

\begin{lemma}[Local holonomy loss]\label{lem:local}
Let \(1\le p,q\le M-1\), \(f,g\in\NN_{r_{pq}}\), and
\(\gamma\in S_M\). Define, with sums over the indicated slices,
\[
 S_\gamma=\sum_{A\in\Om_p(M),\,B\in\Om_q(M)}
 f(z(A,B))\bigl(g(z(A,B))-g(z(\gamma A,B))\bigr),
\]
\[
 Q=\sum_{A'\in\Om_{p-1}(M-2),\,B'\in\Om_{q-1}(M-2)}
 \Delta f(z(A',B'))\Delta g(z(A',B')).
\]
For every transposition \(t\), \(S_t=2Q\).
If \(\Gamma\le S_M\) is nontrivial and \(\gamma\in\Gamma\setminus\{1\}\),
then \(S_\gamma\ge\kappa(\Gamma)Q\).
Furthermore \(\kappa(\Gamma)\ge2\), and odd order implies
\(\kappa(\Gamma)\ge3\).
\end{lemma}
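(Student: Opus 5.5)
The plan is to diagonalize both kernels simultaneously with Proposition~\ref{prop:scalars} and write \(S_\gamma\) as a character sum over the harmonic spaces. Abel summation then turns this into a nonnegative combination of the moved-subset counts \(m_k(\gamma)\). The transposition identity \(S_t=2Q\) will be proved directly and combinatorially, and this identity serves as the normalization.

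\emph{Step 1 (spectral formula).} Let \(P_\gamma\) act on \(V_p\). The first term of \(S_\gamma\) is \(\tr(K_fK_g^{*})\) and the second is \(\tr(K_fK_g^{*}P_\gamma)\), up to replacing \(\gamma\) by \(\gamma^{-1}\), which has the same character. By Proposition~\ref{prop:scalars} the isometries \(J_i^{(p)}\) have orthogonal ranges and are equivariant. Hence
\[
 S_\gamma=\sum_{i=0}^{r}c_i\bigl(\dim\mathcal H_i-\chi_i(\gamma)\bigr),\qquad c_i=k_i(f)k_i(g),
\]
where \(\chi_i\) is the character of \(\mathcal H_i\) and \(r=r_{pq}\). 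The space \(\mathcal H_i\) is the difference of the permutation modules on \(\Om_i(M)\) and \(\Om_{i-1}(M)\). Therefore \(\chi_i(\gamma)=\#\Fix_i(\gamma)-\#\Fix_{i-1}(\gamma)\), and \(\dim\mathcal H_i-\chi_i(\gamma)=m_i(\gamma)-m_{i-1}(\gamma)\) with \(m_0=0\). Abel summation, with \(c_{r+1}=0\), gives
\[
 S_\gamma=\sum_{k=1}^{r}m_k(\gamma)\,(c_k-c_{k+1}).
\]
The monotonicity \(k_0\ge k_1\ge\cdots\ge k_r\ge0\) for both \(f\) and \(g\) makes every weight \(c_k-c_{k+1}\) nonnegative. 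This is exactly where the common decreasing scalars are used.

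\emph{Step 2 (transpositions).} Let \(t=(i\,j)\). Replace \(A\) by \(tA\), which is a bijection of the slice, and average the two expressions. This gives \(S_t=\tfrac12\sum_{A,B}(f(x)-f(x'))(g(x)-g(x'))\), where \(x=z(A,B)\) and \(x'=z(tA,B)\). A pair contributes only when each of \(A\) and \(B\) contains exactly one of \(i,j\). In that case, with \(A'=A\setminus\{i,j\}\) and \(B'=B\setminus\{i,j\}\), the overlaps are \(z(A',B')\) and \(z(A',B')+1\), by the shift bookkeeping of Lemma~\ref{lem:serialdiff}. The four choices of which label lies in \(A\) and which in \(B\) each contribute \(\Delta f\,\Delta g\). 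Hence \(S_t=\tfrac12\cdot4Q=2Q\).

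\emph{Step 3 (the bound).} Note that \(m_k(t)=2\binom{M-2}{k-1}\), since a \(k\)-set is moved by \(t\) exactly when it contains one of \(i,j\). Comparing Step~1 for \(t\) with Step~2 therefore gives \(Q=\sum_k\binom{M-2}{k-1}(c_k-c_{k+1})\). Because \(r=\min(p,q,M-p,M-q)\le\lfloor M/2\rfloor\), every \(k\) in the sum lies in the range of \eqref{eq:kappa}. So \(m_k(\gamma)\ge\kappa(\Gamma)\binom{M-2}{k-1}\), and the nonnegative weights yield \(S_\gamma\ge\kappa(\Gamma)Q\).

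\emph{Step 4 (the constants).} Suppose \(\gamma\ne1\) moves \(a\) to \(b\). If \(S\) is \(\gamma\)-invariant, then \(a\in S\) if and only if \(b\in S\). So every \(k\)-set containing exactly one of \(a,b\) is moved, and there are \(2\binom{M-2}{k-1}\) of them; this gives \(\kappa\ge2\). If \(|\Gamma|\) is odd, every nonidentity \(\gamma\) has odd order, so it has a cycle of length at least three through distinct labels \(a,b,c\). An invariant set meets \(\{a,b,c\}\) in all of it or none of it. Hence the moved sets include all \(k\)-sets meeting \(\{a,b,c\}\) in a proper nonempty subset. There are \(3\binom{M-3}{k-1}+3\binom{M-3}{k-2}=3\binom{M-2}{k-1}\) of these, so \(\kappa\ge3\).

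The main obstacle is Step~1. Its sign and normalization conventions must agree with the counting-normalized lifts of Proposition~\ref{prop:scalars}, including the complemented case with a positive overlap shift. The character identity for \(\mathcal H_i\) must also be taken with the correct multiplicities. I would check the whole formula against the transposition case, where Step~2 gives an independent value.
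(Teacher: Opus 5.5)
Your proposal is correct and follows essentially the same route as the paper: symmetrizing over $A\leftrightarrow tA$ to get $S_t=2Q$, expressing $S_\gamma$ through the common decreasing scalars of Proposition~\ref{prop:scalars} and summation by parts as $\sum_{k=1}^r(\mu_k-\mu_{k+1})m_k(\gamma)$ with nonnegative weights, normalizing $Q$ through the transposition case, and using the two moved-subset estimates for the constants. Your identity $\chi_i(\gamma)=\#\Fix_i(\gamma)-\#\Fix_{i-1}(\gamma)$ is the differenced form of the paper's $m_k(\gamma)=\sum_{i\le k}(d_i-\chi_i(\gamma))$, which the paper obtains from the harmonic decomposition of $V_k$ for $k\le M/2$.
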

\begin{proof}
Removing two labels and reducing both ranks by one preserves the overlap
shift and reduces the radius by one. For \(t=(i\,j)\), symmetrizing in \(A,tA\)
gives
\[
 S_t=\frac12\sum_{A,B}
 (f(z(A,B))-f(z(tA,B)))
 (g(z(A,B))-g(z(tA,B))).
\]
Only sets \(A,B\) containing exactly one of \(i,j\) contribute.
Each reduced pair has four such membership patterns, with the same
product of forward differences; the factor \(1/2\) leaves \(2Q\).
Both/neither patterns vanish, including at reduced boundary ranks.

Proposition~\ref{prop:scalars} yields nonnegative nonincreasing scalars
\(k_i(f),k_i(g)\). With
\(\mu_i=k_i(f)k_i(g)\), \(r=r_{pq}\), and \(\mu_{r+1}=0\), it gives
\begin{equation}\label{eq:localweights}
 S_\gamma=\sum_{k=1}^r(\mu_k-\mu_{k+1})m_k(\gamma).
\end{equation}
Every weight is nonnegative. If \(\gamma(i)=j\ne i\), a fixed subset
must have the same membership at \(i,j\), so
\(m_k(\gamma)\ge2\binom{M-2}{k-1}\).
If \(\gamma\) has a cycle of length at least three, choose three labels
on it. A fixed subset has constant membership on them, and the six
nonconstant patterns give
\[
 m_k(\gamma)\ge
 3\binom{M-3}{k-1}+3\binom{M-3}{k-2}
 =3\binom{M-2}{k-1}.
\]
A transposition has exactly the first number of moved subsets.
Put \(w_k=\mu_k-\mu_{k+1}\ge0\). Substituting a transposition in
\eqref{eq:localweights} and using \(S_t=2Q\) gives
\[
 Q=\sum_{k=1}^r w_k\binom{M-2}{k-1}.
\]
Since \(r\le\lfloor M/2\rfloor\), the defining minimum
\eqref{eq:kappa} applies to every term. Hence
\[
 S_\gamma=\sum_{k=1}^r w_k m_k(\gamma)
 \ge\kappa(\Gamma)\sum_{k=1}^r w_k\binom{M-2}{k-1}
 =\kappa(\Gamma)Q.
\]
The two moved-subset estimates prove the uniform constants.
In an odd-order group every nonidentity element has a cycle of length
at least three, proving the odd-order assertion.
Finally \(m_k(\gamma^{-1})=m_k(\gamma)\), because the fixed subsets
are the same. Equation~\eqref{eq:localweights} therefore also proves
\(S_{\gamma^{-1}}=S_\gamma\); equivalently the real orthogonal
representations in Appendix~\ref{app:spectral} have equal traces on a
permutation and its inverse.
\end{proof}

\begin{lemma}[Ordinary insertion]\label{lem:insertion}
In an ordinary overlap network with interior original ranks, the original
sum with factors \(f_e\) is at least the sum on \(M-2\) labels, ranks
reduced by one, and factors \(\Delta f_e\).
For one prescribed original root set, the respective rooted constants
satisfy \(c\ge c^-\). This includes the leaf row sums.
\end{lemma}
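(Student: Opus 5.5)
The plan is a direct injection from reduced configurations into original ones, followed by a pointwise comparison of factors, in the spirit of the label-removal step in Lemma~\ref{lem:serialdiff}. Fix two labels \(i\ne j\) of \([M]\) and identify \([M]\setminus\{i,j\}\) with the \(M-2\) reduced labels. Send a reduced assignment \((A'_v)\), with \(A'_v\in\Om_{p_v-1}(M-2)\), to the original assignment \(A_v=A'_v\cup\{i\}\in\Om_{p_v}(M)\). Since the label \(i\) is added to every set, the map is injective, and it is well defined because all original ranks are interior.

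The first step checks the overlaps. As in the proof of Lemma~\ref{lem:serialdiff},
\[
\ell^{(M-2)}_{p_u-1,p_v-1}=(p_u+p_v-M)_+=\ell^{(M)}_{p_up_v},
\qquad
r^{(M-2)}_{p_u-1,p_v-1}=r^{(M)}_{p_up_v}-1 .
\]
Hence \(\Delta f_e\) is defined on the reduced overlap domain, and it lies in the reduced cone by Lemma~\ref{lem:cone}. For every edge \(uv\) we have \(|A_u\cap A_v|=|A'_u\cap A'_v|+1\), so
\[
z(A_u,A_v)=z(A'_u,A'_v)+1 .
\]

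The second step compares factors pointwise. Every cone function is nonnegative, so
\[
f_e(z+1)=f_e(z)+\Delta f_e(z)\ge\Delta f_e(z)\ge0
\]
on the reduced domain. Multiplying over edges shows that each image configuration has original weight at least the reduced weight of its preimage. The original configurations outside the image contribute nonnegative terms. Summing, and using that the scalar \(s\) is common to both models, gives the unrooted inequality.

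For the rooted constants, let \(R\) be the prescribed original root set, with interior rank \(p\). Choose \(i\in R\) and \(j\notin R\), which is possible because \(0<p<M\). Set \(R'=R\setminus\{i\}\), a \((p-1)\)-subset of the remaining \(M-2\) labels. The same injection, with the root fixed as \(R=R'\cup\{i\}\), maps completions of \(R'\) to completions of \(R\), so \(c\ge c^-\). Here \(c^-\) is the reduced rooted constant, which by slice transitivity does not depend on which \((p-1)\)-set is prescribed. A leaf row sum is the special case of a single edge rooted at the surviving endpoint.

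The only point needing care, and the likely obstacle, is the bookkeeping that the reduced overlap domain and cone match exactly. This includes boundary cases such as \(M-2=0\), where there is a single assignment and the reduced radius is \(0\). These cases follow from the same shift identities.
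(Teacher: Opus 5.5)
Your proposal is correct and is essentially the paper's own argument: the same injection that adds a fixed label \(i\) to every reduced set and omits \(j\), the pointwise bound \(f_e(z+1)\ge\Delta f_e(z)\ge0\) on the overlaps shifted by one, and, for the rooted constant, the choice \(i\in R\), \(j\notin R\) together with slice transitivity of ordinary rooted sums. Your explicit check that the minimum-overlap shift is preserved while the radius drops by one (so that \(\Delta f_e\) lives exactly on the reduced domain) spells out bookkeeping that the paper leaves implicit.
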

\begin{proof}
Add a fixed new label \(i\) to every reduced set and omit a second
new label \(j\). Every overlap argument increases by one, and
\(f(z+1)\ge\Delta f(z)\), so the injection does not decrease weight.
For a prescribed root \(A\), choose \(i\in A\), \(j\notin A\),
and prescribe \(A\setminus\{i\}\) on the remaining labels.
Ordinary sums at a root are constant over its slice by simultaneous
permutations. The same injection proves the rooted assertion.
\end{proof}
This lemma concerns ordinary or gauged-flat pieces. It does not assert
insertion into arbitrary gained fibres.

\begin{theorem}[Quantitative overlap network]\label{thm:aux}
Let \(H\) be connected and simple, have an edge and treewidth at most two,
and have interior ranks on \(M\ge2\) labels.
Put one nonconstant cone factor and one transport in a fixed
nontrivial group \(\Gamma\le S_M\) on each edge.
Let \(P_f,Z_f\) be its ordinary and gained sums, and let \(R_f\) be its
ordinary sum after reducing all ranks by one, removing two labels,
and replacing every factor by its forward difference.
If the transport system is nonflat, then
\[
 P_f-Z_f\ge\kappa(\Gamma)R_f,\qquad R_f>0.
\]
\end{theorem}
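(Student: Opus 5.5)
We argue by strong induction on the number of vertices of \(H\), eliminating vertices as in Proposition~\ref{prop:network}, but tracking three quantities: the gained sum \(Z_f\), the ordinary sum \(P_f\), and the reduced ordinary sum \(R_f\). Throughout we keep the kernel-list generality of that proposition, so that edges may carry lists of cone factors with transports in \(\Gamma\), and we strengthen the inductive statement accordingly.

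The invariant to maintain is
\[
P-Z\ \ge\ \kappa(\Gamma)\,R,
\]
where \(R\) is always computed from forward differences. The steps are as follows.

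\emph{Positivity of \(R_f\).} We prove \(R_f>0\) first. By Lemma~\ref{lem:cone}, each \(\Delta f_e\) is a nonzero cone function on the reduced domain, and it is positive at its last point. The nested-prefix assignment used in Lemma~\ref{lem:serial} (take \(A'_v\) to be a prefix, or the complement of a prefix, of a single label ordering) realizes the maximal shifted overlap on every edge simultaneously. This gives one strictly positive term in \(R_f\).

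\emph{Leaves.} A leaf is removed by its row sum. In both \(P\) and \(Z\) this multiplies the count by the same constant \(c\). In \(R\) it multiplies by the reduced constant \(c^-\), and Lemma~\ref{lem:insertion} gives \(c\ge c^-\). Because a leaf edge is a bridge, flatness is unchanged, so the invariant scales correctly.

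\emph{Serial steps.} Smoothing a degree-two vertex is exact for \(P\) and \(Z\). By Lemma~\ref{lem:serialdiff}, the forward difference of the convolution is the reduced convolution of the differences, so the step is also exact for \(R\). The transport becomes \(\sigma\tau\in\Gamma\), and nonflatness is preserved. When no parallel edge is created we apply induction directly; the base case is a single edge, which is flat.

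\emph{Parallel steps.} When smoothing creates an edge parallel to an existing one, we apply Lemma~\ref{lem:split}. It gives \(Z\le Z'\) with \(P=P'\), and \(\Delta F\ge \Delta h\,\Delta g\), so that \(R'\ge R\). If the merged network remains nonflat after retaining one transport, we keep that nonflat branch and use induction. Care is needed here: the split changes the quantity \(R\) in the right direction only if \(R\) is monotone in the factor differences, which holds because every factor is nonnegative.

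\emph{Terminal case.} The main obstacle is the case where the original network is nonflat but retaining either \(\sigma\) or \(\tau\) alone gives a flat network. As explained at the start of Section~\ref{sec:width}, \(uw\) is then a bridge between two flat components with rooted constants \(c_u,c_w\). After gauging, the deficit equals \(c_uc_w\,S_\gamma\) with \(\gamma=\sigma^{-1}\tau\ne1\) in \(\Gamma\), and the reduced reserve is at most \(c_u^-c_w^-\,Q\). Lemma~\ref{lem:local} gives \(S_\gamma\ge\kappa(\Gamma)Q\), and Lemma~\ref{lem:insertion} gives \(c\ge c^-\), which closes this case.

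The delicate point is to make the split and the induction compatible: the product \(h(x)g(y)\) must be bounded without losing the quantitative term. A crude split loses the deficit, so the terminal collision has to be evaluated exactly as \(\sum f(g(z)-g(z'))\), not first replaced by \(F+G\). We therefore plan to handle the collision case before invoking Lemma~\ref{lem:split}. Only if one retained branch is still nonflat do we split, and then we apply induction to that branch, using \(Z\le Z_{\text{split}}\) together with the bound \(R_{\text{split}}\ge R\), which follows from \(\Delta F\ge\Delta h\,\Delta g\) and the monotonicity of the reduced sums.
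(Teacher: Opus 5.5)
Your plan is the paper's own proof. It uses the same induction on vertices, with exact leaf and serial steps via Lemma~\ref{lem:insertion} and \eqref{eq:serialdiff}, the split of Lemma~\ref{lem:split} only when a retained transport system is still nonflat, and the terminal bridge collision evaluated exactly as \(c_uc_wS_\gamma\) against \(c_u^-c_w^-Q\) through Lemma~\ref{lem:local}. A few points to tighten: drop the announced kernel-list strengthening, since your steps (like the paper's) never leave one factor per edge, and a list version fails already for one edge carrying two entries with different transports, which has zero deficit but positive reserve; when you discard the second branch, cite Proposition~\ref{prop:network} for its nonnegative deficit, and interchange the inputs of Lemma~\ref{lem:split} if necessary so that the nonflat branch carries \(F\), because only \(F\) satisfies \(\Delta F\ge\Delta h\,\Delta g\).
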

\begin{proof}
Nested prefixes give the positive last difference of every factor,
so \(R_f>0\).
Induct on the number of vertices, retaining all the stated hypotheses:
connectedness, simplicity, width at most two, an edge, interior ranks,
one nonconstant factor and one map in \(\Gamma\) per edge, and nonflatness.
The ambient \(M\) and surviving ranks do not change during this induction.
Choose a vertex of degree at most two by Lemma~\ref{lem:lowdegree}.
It cannot be isolated.

For a leaf, let \(L\) be the common row sum of its edge kernel when
summing over the leaf slice, with the neighbor set fixed. Let \(L^-\)
be the analogous row sum after removing two labels, reducing both ranks
by one, and replacing the factor by its forward difference. These row
sums are independent of the fixed neighbor set and of the transport.
Then \(P_f-Z_f=L(P_{\rm rest}-Z_{\rm rest})\) and
\(R_f=L^-R_{\rm rest}\). Removing it preserves a nonflat cycle, hence
an edge and all inductive hypotheses. Lemma~\ref{lem:insertion}
gives \(L\ge L^-\), so induction applies.
A one-vertex or one-edge nonflat instance is impossible.

For degree two with no existing neighbor edge, serial elimination is
exact and creates a nonconstant factor by Lemma~\ref{lem:serial}.
The transport is a product of maps in \(\Gamma\); smoothing preserves
width, connectedness, and holonomies. The reserve is unchanged by
\eqref{eq:serialdiff}, so induction applies on one fewer vertex.

If the neighbor edge already exists, let \((h,\sigma)\) be the serial
factor and map, and \((g,\tau)\) the old edge.
Consider the two reduced transport systems retaining just \(\sigma\)
or just \(\tau\) there. If one is nonflat, choose it as the \(F\) branch
of Lemma~\ref{lem:split}, interchanging the inputs if necessary.
That branch has a nonconstant \(F\) and all inductive hypotheses.
The \(G\) branch may have a constant or zero factor, but its deficit is
nonnegative by Proposition~\ref{prop:network}.
For clarity, let \(P_F,Z_F,P_G,Z_G\) be the full sums of the two
branches, each with all the other factors retained. Their defining
identities and inequalities are
\[
 P_f=P_F+P_G,\qquad Z_f\le Z_F+Z_G,\qquad
 D_f\ge D_F+D_G.
\]
In the reserve, the serial factor is \(\Delta h\) by
\eqref{eq:serialdiff}, and the old factor is \(\Delta g\).
The bound \(\Delta F\ge\Delta h\,\Delta g\), multiplied by the
remaining nonnegative reduced factors and summed, gives \(R_F\ge R_f\).
Induction on the nonflat \(F\) branch now yields
\[
 P_f-Z_f\ge D_F+D_G\ge D_F
 \ge\kappa(\Gamma)R_F\ge\kappa(\Gamma)R_f.
\]
The last comparison uses \eqref{eq:serialdiff} and nonnegative
remaining reduced factors.

It remains that both reduced transport systems are flat.
Then \(\sigma\ne\tau\); otherwise a reduced flat gauge would extend
across the eliminated vertex. Their common edge is a bridge: an
alternative path between its endpoints would force both maps to equal
that path transport. Delete the bridge and gauge its two flat components
to identity. The component gauges can be chosen in \(\Gamma\).
Their fixed-root sums are constants \(c_u,c_w>0\): a permutation of
nested prefixes realizes any prescribed root set with positive weight.
Their reduced constants satisfy \(c_u\ge c_u^-\), \(c_w\ge c_w^-\)
by Lemma~\ref{lem:insertion}.
For a component consisting only of its root use \(c=c^-=1\).

After these component gauges, denote the two bridge maps again by
\(\sigma,\tau\). Relabel every vertex of the \(w\) component by the
same permutation: write each old set as \(\sigma^{-1}\) times its new
set. All its internal identity maps remain identity, and the two bridge
maps become \(1\) and \(\gamma=\tau\sigma^{-1}\ne1\), with
\(\gamma\in\Gamma\).
Summing the component interiors first now gives
\[
 \begin{aligned}
 P_f-Z_f
 &=c_uc_w\sum_{A,B}h(z(A,B))
       \bigl(g(z(A,B))-g(z(A,\gamma B))\bigr)\\
 &=c_uc_w S_{\gamma^{-1}}=c_uc_w S_\gamma.
 \end{aligned}
\]
Here \(z(A,\gamma B)=z(\gamma^{-1}A,B)\), and the last equality
was proved in Lemma~\ref{lem:local}. The reduced component sums and
the two-label serial identity similarly give
\[
 R_f=c_u^-c_w^-Q.
\]
Lemma~\ref{lem:local} proves the bound. This case terminates the induction;
it does not invoke a nonflat induction hypothesis on a flat branch.
\end{proof}
The auxiliary graph may have triangles. This theorem is about nondecreasing
overlap kernels; it asserts no independent-set comparison on
nonbipartite bases.

\begin{proof}[Proof of Theorem~\ref{thm:B} and width-two rigidity]
A spanning-tree gauge puts all maps in the holonomy image \(\Gamma\).
At a positive feasible profile, \eqref{eq:indicator} is nonconstant
with \(r_{p_up_v}\ge1\). Its forward difference is
\(\binom z{r_{p_up_v}-1}\), precisely the inclusion indicator on the
reduced slices. Both raw occupations reduce by one, since on the right
\((M-2)-(p_v-1)=a_v-1\).
Theorem~\ref{thm:aux} proves the stated \(\kappa\) bound.
The reduced profile is feasible:
\((a_u-1)+(a_v-1)\le M-2\).
Lemma~\ref{lem:counting} gives its positive count, including the unique
empty assignment at \(M=2\).

For a general feasible width-two active graph remove isolated positive
factors. Every remaining nonflat connected component has a strictly
positive deficit by the bound just proved; every component has
\(0\le Z_i\le P_i\), \(P_i>0\).
Thus \(\prod Z_i=\prod P_i\) exactly when each component has equality,
which is exactly active flatness. The flat implication follows from
Lemma~\ref{lem:flat}. At \(M=1\) only isolated active vertices remain.
\end{proof}

\begin{corollary}[Profile-dependent holonomy bound]\label{cor:profilekappa}
Under the hypotheses of Theorem~\ref{thm:B}, put
\[
 r^*(\occ)=\max_{v\in V(G)}\min(a_v,M-a_v).
\]
For \(1\le s\le\lfloor M/2\rfloor\), define
\[
 \kappa_s(\Gamma)=
 \min_{\substack{\gamma\in\Gamma\setminus\{1\}\\1\le k\le s}}
 \frac{m_k(\gamma)}{\binom{M-2}{k-1}}.
\]
Then
\[
 \Da_{\widetilde G}(\occ)
 \ge\kappa_{r^*(\occ)}(\Gamma)\Ra_G(\occ).
\]
In particular, for a noncanonical full \(M\)-fold DP cover
\(\mathfrak H\) of such a graph, with holonomy image \(\Gamma\),
\[
 P(G,M)-P_{DP}(G,\mathfrak H)\ge\operatorname{mindeg}(\Gamma),
 \qquad
 \operatorname{mindeg}(\Gamma)
 =\min_{\gamma\ne1}\#\{i:\gamma(i)\ne i\}.
\]
\end{corollary}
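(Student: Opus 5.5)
We re-run the proof of Theorem~\ref{thm:aux}, tracking the largest overlap radius that actually occurs, rather than the crude bound \(r\le\lfloor M/2\rfloor\). The key observation is that in Lemma~\ref{lem:local} the weights \(w_k=\mu_k-\mu_{k+1}\) live only on \(1\le k\le r_{pq}\), where \(r_{pq}\) is the radius of the bridge edge in the terminal case. So it suffices to show that every kernel met during the elimination has radius at most \(r^*(\occ)\).

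\textbf{Step 1: bound the radii.} For an edge with ranks \(p,q\), \(r_{pq}=\min(p,q,M-p,M-q)\le\min(p,M-p)\). Inclusion ranks are \(a_v\) or \(M-a_v\), so \(\min(p_v,M-p_v)=\min(a_v,M-a_v)\le r^*(\occ)\). Elimination in Theorem~\ref{thm:aux} never changes the surviving vertices' ranks, since the ambient \(M\) and ranks are fixed. Hence every edge kernel, whether original, serial or split, is indexed by a pair of original vertex ranks and has radius at most \(r^*\). In particular the bridge in the terminal case has \(r=r_{p_up_w}\le r^*\le\lfloor M/2\rfloor\).

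\textbf{Step 2: sharpen the local bound.} Replace \(\kappa(\Gamma)\) by \(\kappa_r(\Gamma)\) in Lemma~\ref{lem:local}. Since only \(k\le r\) carry weight, \eqref{eq:localweights} gives
\[
S_\gamma=\sum_{k\le r} w_k m_k(\gamma)\ge\kappa_r(\Gamma)\sum_{k\le r} w_k\binom{M-2}{k-1}=\kappa_r(\Gamma)Q.
\]
Because \(\kappa_s\) is nonincreasing in \(s\) (a minimum over a larger set), this yields \(S_\gamma\ge\kappa_{r^*}(\Gamma)Q\).

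\textbf{Step 3: carry the constant through the induction.} Run the induction of Theorem~\ref{thm:aux} unchanged with the constant \(\kappa_{r^*}(\Gamma)\). The leaf, serial and split steps use only nonnegativity of the constant. Every terminal case produces at least \(\kappa_{r^*}\) times its reserve by Step 2. The reduction to Theorem~\ref{thm:B}'s setting via \eqref{eq:indicator} is verbatim as before.

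\textbf{Step 4: the DP statement.} At \(\occ=\one\) with \(M\ge2\), we have \(r^*=1\). Then
\[
\kappa_1(\Gamma)=\min_{\gamma\ne1} m_1(\gamma)/\binom{M-2}{0}=\operatorname{mindeg}(\Gamma),
\]
since \(m_1\) counts the moved points. Also \(\Ra_G(\one)=[\xx^{0}]p_G^{M-2}=1\). With \(\Pa_G(\one)=P(G,M)\) and \(\Za=P_{DP}\), and noting that noncanonical means nonflat, the displayed inequality follows.

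\textbf{Main obstacle.} The delicate point is Step 1: checking that no step of the elimination, in particular Lemma~\ref{lem:split} producing \(F,G\) on the same edge, introduces a kernel whose domain exceeds \(r_{p_up_w}\). This holds because the domain is determined by the endpoint ranks alone.
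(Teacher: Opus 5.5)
Your proposal is correct and follows the paper's proof essentially step for step. Since the surviving ranks never change, every bridge radius is at most \(r^*(\occ)\), and the weights in \eqref{eq:localweights} only reach that radius. The single constant \(\kappa_{r^*(\occ)}(\Gamma)\) therefore replaces \(\kappa(\Gamma)\) throughout the induction of Theorem~\ref{thm:aux}, and at the singleton profile \(r^*=1\), \(\Ra_G(\one)=1\), and \(\kappa_1(\Gamma)=\operatorname{mindeg}(\Gamma)\).
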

\begin{proof}
The ranks at surviving vertices are unchanged throughout the induction
in Theorem~\ref{thm:aux}. Each local bridge estimate therefore uses a
radius
\[
 r_{p_up_w}\le
 \min\{\min(p_u,M-p_u),\min(p_w,M-p_w)\}\le r^*(\occ).
\]
The second inequality holds because complementing a right occupation
does not change its distance from the boundary ranks.
In Lemma~\ref{lem:local} the weights only run to that bridge radius.
The defining minimum for \(\kappa_{r^*(\occ)}\) thus bounds every
moved-subset term used there. Replace \(\kappa(\Gamma)\) by this single
constant in the entire quantitative induction. The leaf step uses
\(L\ge L^-\) with the same constant on the remaining graph; the serial
step preserves both counts and the reserve; the split step retains
\(R_F\ge R_f\); and the terminal step has the improved local bound.
This proves the first claim.
Notice that \(1\le r^*(\occ)\le\lfloor M/2\rfloor\) under the
positive feasible hypotheses.

At the singleton profile \(r^*=1\) and \(\Ra_G(\one)=1\).
Since \(m_1(\gamma)\) is exactly the number of moved labels and
\(\binom{M-2}{0}=1\), we have
\(\kappa_1(\Gamma)=\operatorname{mindeg}(\Gamma)\).
The DP identification proves the final assertion.
\end{proof}
The restricted minimum satisfies \(\kappa_s(\Gamma)\ge\kappa(\Gamma)\).
It improves selected coefficients; the unrestricted constant remains
the one used for a bound on the entire polynomial.

\begin{corollary}[Polynomial and activity bounds]\label{cor:polynomial}
Under the graph and cover hypotheses of Theorem~\ref{thm:B},
\[
 p_G^M-\Pi_{\widetilde G}\coefge
 \kappa(\Gamma)\xx^V p_G^{M-2}.
\]
For \(\lambda_v>0\), with \(Z_G=p_G(\boldsymbol\lambda)\), this implies
\[
 \frac{\Pi_{\widetilde G}(\boldsymbol\lambda)}{Z_G^M}
 \le1-\frac{\kappa(\Gamma)\prod_v\lambda_v}{Z_G^2}.
\]
\end{corollary}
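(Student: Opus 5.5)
The plan is to compare coefficients profile by profile, and then to evaluate at positive activities. Fix a vector \(\occ\) of nonnegative integers. The coefficient of \(\xx^\occ\) on the left is \(\Da_{\widetilde G}(\occ)\). On the right, since \(\xx^V\) shifts exponents by \(\one\), the coefficient is \(\kappa(\Gamma)[\xx^{\occ-\one}]p_G^{M-2}=\kappa(\Gamma)\Ra_G(\occ)\), using the convention that a negative exponent gives coefficient zero; this also covers \(M=2\), where \(p_G^0=1\). So it suffices to prove \(\Da_{\widetilde G}(\occ)\ge\kappa(\Gamma)\Ra_G(\occ)\) for every \(\occ\). I split into three cases.

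\emph{Case 1: \(\occ\) positive and feasible.} This is exactly Theorem~\ref{thm:B}, whose hypotheses (connected, has an edge, \(\tw(G)\le2\), \(M\ge2\), nonflat) are the standing hypotheses of the corollary.

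\emph{Case 2: some \(a_v=0\).} The right side vanishes, because \(\occ-\one\) has a negative entry. The left side is nonnegative in either subcase. If \(\occ\) is feasible, \(G_\occ\) has treewidth at most two since treewidth is hereditary, and Theorem~\ref{thm:A} gives \(\Da\ge0\). If \(\occ\) is infeasible, Lemma~\ref{lem:counting} gives \(\Pa=\Za=0\).

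\emph{Case 3: \(\occ\) positive but infeasible.} The left side is zero by Lemma~\ref{lem:counting}. I must check that \(\Ra_G(\occ)=0\). The reduced profile \(\occ-\one\) on \(M-2\) labels is infeasible for \eqref{eq:feasible}: if \(a_v>M\) then \(a_v-1>M-2\), and if \(a_u+a_v>M\) then \((a_u-1)+(a_v-1)>M-2\). Lemma~\ref{lem:counting} applied to the trivial \((M-2)\)-cover then gives zero. This bookkeeping is the only point needing care; the main content is already in Theorem~\ref{thm:B}.

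For the activity bound, every polynomial involved has nonnegative coefficients, so a coefficientwise inequality survives evaluation at \(\lambda_v>0\). This gives
\[
 \Pi_{\widetilde G}(\boldsymbol\lambda)\le Z_G^M-\kappa(\Gamma)\Bigl(\prod_v\lambda_v\Bigr)Z_G^{M-2}.
\]
Dividing by \(Z_G^M>0\) yields the stated inequality.
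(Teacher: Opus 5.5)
Your proof is correct and follows the paper's argument. Theorem~\ref{thm:B} handles positive feasible profiles, and the comparison handles coefficients where the right side vanishes; the paper cites the network comparison there, you cite Theorem~\ref{thm:A}. Evaluation at positive activities then gives the activity bound. The only difference is bookkeeping: the paper splits on whether \(\Ra_G(\occ)\ne0\), in which case reduced feasibility forces \(\occ\) to be positive and feasible, whereas you split on \(\occ\) itself and verify the contrapositive directly in your Case~3.
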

\begin{proof}
If a coefficient on the right is nonzero, the reduced profile is
feasible. Since every vertex has a neighbor, the original profile is
positive, feasible, and interior; apply Theorem~\ref{thm:B}.
If the right coefficient vanishes, network comparison supplies
nonnegativity. Substitution of positive activities and division by
\(Z_G^M>0\) gives the second formula.
\end{proof}

\begin{example}[Sharp uniform constants]\label{ex:sharp}
For \(G=C_{2\ell}\), \(\ell\ge2\), put a transposition on one edge and
identity on the others. Its two moved labels form a nonflat two-cover
\(Y\); its \(M-2\) fixed labels give ordinary copies. Thus
\(\Pi_{\widetilde G}=\Pi_Yp_G^{M-2}\).
A proper support of the cycle is a forest, so has no two-cover defect.
On full support the only feasible two-label profile is all ones;
the ordinary count is two, while the transposition makes both
alternating selections inconsistent around the cycle. Hence
\[
 p_G^M-\Pi_{\widetilde G}=2\xx^V p_G^{M-2}.
\]
For one \(3\)-cycle edge, the singleton deficit on an even cycle is
three by the trace calculation in Proposition~\ref{prop:first},
whereas its reduced all-zero coefficient is one.
This proves sharpness of three for an odd-order holonomy image.
\end{example}

\begin{example}[An even-order group with sharp factor three]\label{ex:even}
Let \(M=6\), \(\theta=(12)(34)(56)\), and \(\Gamma=\{1,\theta\}\).
A fixed subset is a union of the three two-element orbits. Therefore
\[
\begin{array}{c|ccc}
 k&1&2&3\\ \hline
 \#\Fix_{\Om_k}(\theta)&0&3&0\\
 m_k(\theta)&6&12&20\\
 m_k(\theta)/\binom4{k-1}&6&3&10/3
\end{array}
\]
and \(\kappa(\Gamma)=3\), although \(|\Gamma|=2\).
On \(C_4\) use raw left occupations \((2,2)\), raw right occupations
\((4,4)\), and one \(\theta\)-edge. The three identity edges already force
one common left \(2\)-set \(A\), with every right set its complement.
The twisted constraint is \(\theta A=A\), with three choices.
The ordinary count is \(\binom62=15\); in the reduced model on four
labels one common left singleton gives reserve \(\binom41=4\).
Thus \(D=15-3=12=3R\).
For the singleton profile on the same cover,
\(P(C_4,6)=5^4+5=630\), while the cycle trace calculation gives
\(D=m_1(\theta)=6\) and hence \(Z=624\). Here \(R=1\) and
\(\kappa_1(\Gamma)=6\), so the profile-dependent bound is sharp too.
The saturated profile has \(r^*=2\) and \(\kappa_2(\Gamma)=3\);
the two profiles explain why the full-polynomial constant can be
smaller than the singleton constant.
\end{example}

\begin{example}[Mixed occupations and noncommuting holonomy]\label{ex:mixedranks}
Let \(G=K_{2,3}\) have left vertices \(u,w\) and right vertices
\(x_1,x_2,x_3\). Set \(M=3\),
\[
 (a_u,a_w;a_{x_1},a_{x_2},a_{x_3})=(1,1;1,2,1).
\]
Put identity maps on \(ux_i\), and maps \(1,t,s\) on \(wx_i\), where
\(t=(12)\) and \(s=(13)\). The identity edges, including \(wx_1\),
form a spanning tree; the two remaining maps generate \(S_3\), so
this holonomy is noncommutative and \(\kappa(S_3)=2\).
The bags \(\{u,w,x_i\}\) form a width-two decomposition.

Summing a right vertex gives the kernels \(1+z,z,1+z\), respectively,
on the singleton inclusion sets \(A\) at \(u\) and \(B\) at \(w\).
If \(\delta_i=\ind{A=\sigma_iB}\), with
\((\sigma_1,\sigma_2,\sigma_3)=(1,t,s)\), then
\[
 P=3\cdot2\cdot1\cdot2=12,\qquad
 Z=\sum_{B\in\Om_1(3)}(1+\ind{tB=B})(1+\ind{tB=sB})=4.
\]
The three summands in \(Z\) are \(1,1,2\).
The reduced profile on one label is \((0,0;0,1,0)\), hence \(R=1\)
and \(D=8\).

To see the split in the proof, smooth \(x_1\), then \(x_2\).
The factors \((z,t)\) and \((1+z,1)\) are parallel on \(uw\).
Lemma~\ref{lem:split}, with \(f=z\), \(g=1+z\), gives \(F=2z\)
and a zero second branch. The retained branch has
\[
 P_F=12,\quad
 Z_F=\sum_{B\in\Om_1(3)}2(1+\ind{tB=sB})=6,\quad R_F=2.
\]
Thus the gained envelope slack is \(6-4=2\), while
\(\Delta F=2\ge\Delta z\,\Delta(1+z)=1\) preserves the original
reserve \(R=1\). Smoothing \(x_3\) leaves two transports on one edge.
Their relative map \(t^{-1}s\) is a three-cycle, and its local loss is
\(6\), with reduced local product \(2\).
The full deficit is therefore \(8=2+6\), displaying both the split
slack and the terminal holonomy loss.
\end{example}

\begin{corollary}[Disconnected width-two graphs]\label{cor:disconnected}
For a positive feasible profile on a possibly disconnected width-two
bipartite graph, a nonflat component \(J\) gives
\(\Da_{\widetilde G}\ge\kappa(\Gamma_J)\Ra_G\) and
\(\Da_{\widetilde G}>0\). The reserve is positive if every isolated
occupation is at most \(M-1\).
\end{corollary}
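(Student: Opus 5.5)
The plan is to factor every count over the connected components of \(G\) (Lemma~\ref{lem:flat}) and to treat the nonflat component \(J\) with Theorem~\ref{thm:B}. Write the components as \(J=G_1,G_2,\dots,G_s\). Because the profile is positive, each component's active graph is the component itself. Each component inherits its restricted cover, and the rest of the data (edges, occupations) is unchanged. The reduced reserve also factors:
\[
 \Ra_G(\occ)=[\xx^{\occ-\one}]\prod_i p_{G_i}^{M-2}=\prod_i \Ra_{G_i}(\occ|_{G_i}).
\]
This holds because \(p_G=\prod_i p_{G_i}\) and the monomials on disjoint vertex sets separate. Hence \(\Pa_G=\prod P_i\), \(\Za_{\widetilde G}=\prod Z_i\), and \(\Ra_G=\prod R_i\).

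\textbf{The nonflat component.} Since \(J\) is nonflat, it contains a cycle. So \(J\) is connected, bipartite, has an edge, and has treewidth at most two, and its positive feasible profile satisfies \(M\ge2\). Theorem~\ref{thm:B} therefore gives
\[
 P_J-Z_J\ge\kappa(\Gamma_J)R_J,\qquad R_J>0.
\]

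\textbf{The other components.} For \(i\ne1\), Theorem~\ref{thm:A} (the width-two case, Proposition~\ref{prop:network}) gives \(0\le Z_i\le P_i\). Lemma~\ref{lem:counting} gives \(P_i>0\). For these components we also need \(P_i\ge R_i\):
\begin{itemize}
\item If \(G_i\) has an edge, the positive feasible profile has interior inclusion ranks, and Lemma~\ref{lem:insertion} applied to the ordinary network with inclusion indicators \eqref{eq:indicator} gives \(P_i\ge R_i\). The forward difference of an indicator is exactly the reduced indicator, as in the proof of Theorem~\ref{thm:B}.
\item If \(G_i\) is an isolated vertex with occupation \(a\), then \(P_i=\binom Ma\) and \(R_i=\binom{M-2}{a-1}\le\binom Ma\). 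When \(M\ge2\) this holds directly. For \(a=M\), the reserve factor is \(\binom{M-2}{M-1}=0\), which still satisfies the inequality.
\end{itemize}

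\textbf{Combining.} Write
\[
 \Da_{\widetilde G}=P_J\prod_{i\ne1}P_i-Z_J\prod_{i\ne1}Z_i\ge(P_J-Z_J)\prod_{i\ne1}P_i .
\]
The inequality uses \(Z_i\le P_i\) and \(Z_J\ge0\). Then \(\prod_{i\ne1}P_i\ge\prod_{i\ne1}R_i\) by the previous step, which yields \(\Da_{\widetilde G}\ge\kappa(\Gamma_J)\Ra_G\). Strict positivity follows from \((P_J-Z_J)\ge\kappa R_J>0\) together with \(\prod P_i>0\), and it does not need the reserve to be positive.

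\textbf{Positivity of the reserve.} Components with an edge have \(R_i>0\), by nested prefixes on \(M-2\) labels exactly as in Theorem~\ref{thm:B}. An isolated vertex has \(R_i=\binom{M-2}{a-1}>0\) precisely when \(1\le a\le M-1\). This is the stated condition.

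The main point to get right is the inequality \(P_i\ge R_i\) for the flat or irrelevant components, particularly at boundary occupations. Lemma~\ref{lem:insertion} handles interior ranks, and isolated vertices are checked directly.
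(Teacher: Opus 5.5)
Your proof is correct and is essentially the paper's argument: factor the counts over components, bound \(D_G\ge D_J\prod_{i\ne J}P_i>0\), apply Theorem~\ref{thm:B} to \(J\), and use ordinary insertion \(P_i\ge R_i\) on the remaining components. The only cosmetic difference is in the isolated vertices. You check \(\binom{M-2}{a-1}\le\binom Ma\) directly, including \(a=M\) where the reserve factor vanishes. The paper instead splits into the case \(R_G>0\), where all isolated ranks are interior and insertion applies, and the case \(R_G=0\), where scalar comparison suffices.
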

\begin{proof}
Product expansion gives
\(D_G\ge D_J\prod_{i\ne J}P_i>0\).
If \(R_G>0\), all isolated ranks too are interior, and ordinary insertion
gives \(P_i\ge R_i\) for every other component. Multiply these bounds
with Theorem~\ref{thm:B}. If \(R_G=0\), scalar comparison suffices.
Reduced feasibility proves the stated positivity condition.
\end{proof}
\begin{remark}[Odd-order holonomy]
The odd-order hypothesis in these statements concerns the entire
holonomy image. Odd-order generators alone do not imply that the
group they generate has odd order.
\end{remark}

\section{Chordal-bipartite comparison and rigidity}\label{sec:chordal}

After zeros and common isolated factors are removed, the proof tracks
three nonnegative losses:
\begin{equation}\label{eq:three}
 D_{\widetilde G}(\occ)=\varepsilon+\Lambda D_Y+\Lambda(Z_Y-Z_H),\qquad \varepsilon\ge0,\ \Lambda>0.
\end{equation}
Here \(\varepsilon\) is the surplus from saturating one side, \(H\) is a smaller
contracted multigraph, and \(Y\) retains one labelled edge from each
parallel class. The terms represent saturation surplus, the smaller
graph deficit, and assignments rejected by omitted constraints.
We now construct the identity and prove each assertion about its terms.

\subsection{Compression and a nondecreasing product cone}

The operation in the proof is the usual two-label shifting operation
of extremal set theory; see Frankl's survey \cite{Frankl}.
Here it is applied to several prescribed sets at once and to a
product of arbitrary nonnegative nondecreasing intersection costs.
KMS use a rearrangement inequality to compare complete-bipartite
singleton counts \cite{KMS}. The compression below has the same
alignment principle, but retains the entire product cone needed for
successive nest-column eliminations and for saturation.

\begin{lemma}[Simultaneous compression]\label{lem:compression}
For sets \(D_i\subseteq[M]\) of sizes \(d_i\) and nonnegative
nondecreasing functions \(h_i\),
\[
 \sum_{Q\in\Om_q(M)}\prod_i h_i(|Q\cap D_i|)
 \le \sum_{Q\in\Om_q(M)}\prod_i h_i(|Q\cap[d_i]|).
\]
Equality holds when the \(D_i\) are nested.
\end{lemma}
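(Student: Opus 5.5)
We prove the inequality by the standard two-label shifting argument, with a potential function that forces termination. For labels \(i<j\), define the compression \(C_{ij}\) on a set \(D\): if \(j\in D\) and \(i\notin D\), replace \(j\) by \(i\); otherwise leave \(D\) unchanged. We apply \(C_{ij}\) simultaneously to every \(D_i\) and show that the left-hand sum
\[
 \Phi(D_1,\dots)=\sum_{Q\in\Om_q(M)}\prod_i h_i(|Q\cap D_i|)
\]
does not decrease. Each \(C_{ij}\) that changes some \(D_i\) strictly decreases \(\sum_i\sum_{x\in D_i}x\). Hence finitely many compressions reach a family stable under all \(C_{ij}\). A set stable under every \(C_{ij}\) with \(i<j\) is an initial segment \([d_i]\), so the process ends at \(\prod_i h_i(|Q\cap[d_i]|)\), which is the right-hand side. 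Sizes are preserved throughout.

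The key step is the monotonicity \(\Phi(D)\le\Phi(C_{ij}D)\). Partition \(\Om_q(M)\) into orbits of the transposition \((i\,j)\): the fixed sets, which contain both or neither of \(i,j\), and pairs \(\{Q,Q'\}\) with \(Q\ni i\), \(Q\not\ni j\) and \(Q'=(i\,j)Q\). For a fixed \(Q\), each \(|Q\cap D_k|\) is unchanged by the compression, since \(Q\) treats \(i,j\) alike and the compression only swaps membership between \(i\) and \(j\). For a pair, fix \(Q_0=Q\setminus\{i\}\). Every \(D_k\) then falls into one of three types.

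Type (a): \(D_k\) contains both or neither of \(i,j\). The intersection sizes with \(Q\) and with \(Q'\) are equal, say \(c_k\), and the set is untouched by the compression.

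Type (b): \(D_k\) contains \(i\) but not \(j\). It is untouched, and its intersections are \(|Q_0\cap D_k|+1\) with \(Q\) and \(|Q_0\cap D_k|\) with \(Q'\).

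Type (c): \(D_k\) contains \(j\) but not \(i\). Before compression its intersections are \(x_k\) with \(Q\) and \(x_k+1\) with \(Q'\). After compression it becomes type (b), with intersections \(x_k+1\) with \(Q\) and \(x_k\) with \(Q'\).

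Write \(\alpha_k=h_k(x_k)\) and \(\beta_k=h_k(x_k+1)\), so \(\alpha_k\le\beta_k\). Set
\[
 U=\prod_{(b)}\beta_k,\quad u=\prod_{(b)}\alpha_k,\quad V=\prod_{(c)}\beta_k,\quad v=\prod_{(c)}\alpha_k .
\]
Then \(U\ge u\ge0\) and \(V\ge v\ge0\), and type (a) contributes a common factor \(W\ge0\). Before compression the pair contributes \(W(Uv+uV)\); after compression it contributes \(W(UV+uv)\). The difference is
\[
 W(U-u)(V-v)\ge0,
\]
which is the rearrangement inequality. Summing over orbits gives the monotonicity.

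The equality clause is direct. If the \(D_i\) are nested, choose a permutation \(\pi\) of \([M]\) mapping every \(D_i\) onto \([d_i]\): order the labels along the chain of nested sets. Reindexing \(Q\mapsto\pi Q\) is a bijection of \(\Om_q(M)\), so the two sums coincide term by term.

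The only delicate point is the bookkeeping in the pair analysis, namely that the compression swaps the roles of \(Q\) and \(Q'\) for type (c) sets only. Once the three types are separated, the inequality is a two-term rearrangement.
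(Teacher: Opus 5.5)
Your proposal is correct and follows the paper's own proof essentially step for step. Both arguments use simultaneous two-label shifting, split \(\Om_q(M)\) into orbits of the transposition so that each pair contributes the nonnegative increase \(W(U-u)(V-v)\) (the paper's \((\prod_I r_i-\prod_I l_i)(\prod_J r_j-\prod_J l_j)\)), terminate by the decreasing label-sum potential at initial segments, and obtain equality for nested sets by one common relabeling permutation.
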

\begin{proof}
For labels \(a<b\), replace \(b\) by \(a\) simultaneously in every
\(D_i\) containing \(b\) but not \(a\).
Partition \(q\)-sets into orbits under \((a\,b)\). Fixed orbits do not
change. In an orbit \(F\cup\{a\},F\cup\{b\}\), let \(I,J\) index the
sets containing only \(a\) and only \(b\), respectively. Write
\(l_i=h_i(|F\cap D_i|)\), \(r_i=h_i(|F\cap D_i|+1)\).
After suppressing common nonnegative factors, the increase is
\[
 \left(\prod_{i\in I}r_i-\prod_{i\in I}l_i\right)
 \left(\prod_{j\in J}r_j-\prod_{j\in J}l_j\right)\ge0.
\]
Empty products cause no problem, and no zero factor is divided out.
Each nontrivial compression decreases the sum of labels in all sets.
At termination each set is its prescribed prefix.
Nested sets can already be taken to those prefixes by one common
permutation, proving equality.
\end{proof}

Let \(\CC_s\) consist of finite sums of products
\(F(d_1,\ldots,d_s)=\sum_t c_t\prod_i f_{t,i}(d_i)\),
where \(c_t\ge0\) and each \(f_{t,i}\) is nonnegative and nondecreasing
on \(0,\ldots,M\).
For \(I\subseteq[s]\), define
\[
 (T_{I,q}F)(\mathbf d)=\sum_{Q\in\Om_q(M)}F(\mathbf y),\qquad
 y_i=\begin{cases}|Q\cap[d_i]|&i\in I,\\d_i&i\notin I.\end{cases}
\]
Compression bounds the analogous sum with arbitrary sets of sizes
\(d_i\), with equality for nested sets on \(I\).
Also \(T_{I,q}F\in\CC_s\): for every fixed \(Q\) and product generator,
each function \(d\mapsto f_{t,i}(|Q\cap[d]|)\) is nonnegative and
nondecreasing, so the finite sum remains in the cone.

For row neighborhoods \(J_i=N(i)\subseteq R\), a \emph{beta order}
eliminates at each step a column whose incident row neighborhoods,
restricted to the remaining columns, are totally ordered by inclusion.
Such a column is a \emph{nest point}. In standard terminology these
are beta elimination orders of a \emph{beta-acyclic hypergraph}.
Equivalently its incidence graph has no induced cycle of length at
least six; Appendix~\ref{app:structure} proves the equivalence with the
order convention used here. These notions are classical
\cite{Fagin,BrouwerKolen,Ausiello,BraultBaron}.

\begin{lemma}[Root-last order]\label{lem:rootlast}
A finite bipartite graph is chordal bipartite if and only if it has
a beta order. In that case a beta order may end at any prescribed column,
and the same assertion holds with the two sides transposed.
Isolated columns and empty or repeated row neighborhoods are allowed.
\end{lemma}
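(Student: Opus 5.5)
We prove the two directions separately, writing \(B=(\mathcal R,\mathcal C,E)\) with rows \(\mathcal R\) and columns \(\mathcal C\). A column \(c\) is a nest point if the neighborhoods \(N(i)\cap\mathcal C'\), for rows \(i\in N(c)\), are totally ordered by inclusion, where \(\mathcal C'\) is the current column set.

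\emph{Beta order implies chordal bipartite.} Suppose \(B\) has an induced cycle \(i_1c_1i_2c_2\cdots i_kc_k\) with \(k\ge3\), so its length is at least six. Consider the first column of this cycle removed in the beta order, say \(c_j\). At that moment every column of the cycle is still present. The two cycle rows adjacent to \(c_j\) are \(i_j\) and \(i_{j+1}\), with indices taken cyclically. The row \(i_j\) still contains \(c_{j-1}\) but not \(c_{j+1}\), because the cycle is induced and \(k\ge3\). Symmetrically, \(i_{j+1}\) contains \(c_{j+1}\) but not \(c_{j-1}\). So the two neighborhoods are incomparable, and \(c_j\) was not a nest point, a contradiction. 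Hence \(B\) is chordal bipartite.

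\emph{Chordal bipartite implies a beta order.} Chordal bipartiteness is hereditary under deleting columns, since an induced subgraph of a graph with no induced cycle of length at least six has none either. So it suffices to prove the two-nest-point statement: every chordal bipartite \(B\) with at least two columns has two distinct nest-point columns. Given this, we build an order ending at a prescribed column \(c^*\) greedily. At each step with at least two columns remaining, we delete a nest point different from \(c^*\), which exists by the statement. When \(c^*\) is the last column, it is trivially a nest point.

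Isolated columns and columns whose incident row neighborhoods coincide are automatically nest points, so they need no special treatment. Repeated rows can be merged without changing nest-point status. For transposed sides we apply the same argument to the transposed graph, which is also chordal bipartite.

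\emph{The two-nest-point theorem.} This is the main obstacle, and we follow Brault-Baron, as the introduction indicates. We pass to the hypergraph \(\mathcal E=\{N(i)\}\) on the column set. Here a nest point is a vertex whose incident edges form a chain. The induction is on the number of vertices plus edges. First we remove duplicate and empty edges, and contract vertices with identical incidence. If some vertex lies in no edge, or there are only two vertices, we are done.

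Otherwise we pick a vertex \(x\) and form the hypergraph \(\mathcal E-x\). It is still beta-acyclic, since \(\beta\)-cycles correspond to induced even cycles of length at least six. By induction it has two nest points \(y\ne y'\). We aim to show that at least two of \(y\), \(y'\), and some replacement vertex are nest points in \(\mathcal E\).

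A vertex \(y\) fails to be a nest point in \(\mathcal E\) only if two of its edges become incomparable once \(x\) is restored. In that case there are edges \(e\ni x\) and \(f\not\ni x\), both containing \(y\), with \(e\setminus\{x\}\subseteq f\). We then choose \(x\) extremal, for instance a vertex whose incident edges are maximal in an appropriate sense, or we apply induction to the subhypergraph induced on \(f\). The failure configuration together with a second failure would produce a \(\beta\)-cycle through \(x\), \(y\), \(y'\), which gives the contradiction.

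This case analysis is the delicate step. The plan is to reproduce Brault-Baron's argument, adapted to our conventions of allowing empty or repeated rows and isolated columns, in Appendix~\ref{app:structure}.
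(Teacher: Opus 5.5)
Your easy direction (the first removed column of an induced cycle of length at least six sees two incomparable rows) is correct and matches the paper. So is your reduction of the root-last order to a two-nest-point theorem: greedily delete a nest point other than the root, transpose for the other side, and note that isolated columns and empty or repeated rows are harmless. The gap is the two-nest-point theorem itself, which is the entire substance of the lemma and which you defer to a plan. You leave the choice of \(x\) open and claim that two failures ``would produce a \(\beta\)-cycle through \(x,y,y'\)''. That claim is false even for the most extremal choice of \(x\).

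Here is a counterexample. Take columns \(x,y,y',w\), rows \(\{x,y,y'\}\) and \(\{y,y',w\}\), and the four singleton rows, so no two columns are twins. The only cycle of the incidence graph is the \(4\)-cycle through \(y,y'\) and the two large rows, so the graph is chordal bipartite, and \(x\) is even a nest point. After deleting \(x\), the chains \(\{y\}\subseteq\{y,y'\}\subseteq\{y,y',w\}\) and \(\{y'\}\subseteq\{y,y'\}\subseteq\{y,y',w\}\) make both \(y\) and \(y'\) nest points. An induction hypothesis that only supplies ``two nest points'' may therefore return \(\{y,y'\}\). In the original hypergraph, however, \(\{x,y,y'\}\) and \(\{y,y',w\}\) are incomparable, so both fail and no contradiction is available. (Minor point: an isolated column yields only one nest point, so ``we are done'' there still needs a second.)

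Two ideas are missing, and together they make the induction close.
\begin{enumerate}
\item \(x\) must be an \emph{alpha leaf}, that is, a column with a largest incident row \(e_x\), and its existence must be proved. After adding singleton rows, the primal graph is chordal, since an induced primal cycle of length \(k\ge4\) yields an induced incidence cycle of length \(2k\). Every primal clique lies in a row, since a smallest uncovered clique has size at least three and yields an induced incidence \(C_6\). A simplicial vertex of this chordal graph then has its closed neighbourhood equal to a row.
\item The induction hypothesis must be strengthened to \emph{two nest points that are nonadjacent once a possible full row \(V\) is discarded}; a full row is removed before inducting.
\end{enumerate}
With these, suppose \(V\) is not a row. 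Then \(e_x\setminus\{x\}\) is not the full row of the trace hypergraph, so it cannot contain both returned nest points. One of them, \(y\), lies outside \(e_x\). Hence no row contains both \(x\) and \(y\), the rows at \(y\) are unchanged by the deletion, and \(y\) is a nest point of the original hypergraph. Since a nest point is an alpha leaf, rerunning the argument from \(y\) gives a second nest point outside \(e_y\). It is therefore nonadjacent to \(y\), which re-establishes the strengthened hypothesis. In the example above, nonadjacency forbids the pair \(\{y,y'\}\) and forces \(w\), which indeed survives.
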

\begin{proof}
Appendix~\ref{app:structure} derives the order, including the
root-last refinement, from a self-contained two-nest-point argument.
\end{proof}

\begin{proposition}[Root-retaining cone comparison]\label{prop:conecompare}
Put \(Q_v=[M]\setminus S_v\) on the right and
\[
 E_i^\sigma(Q)=\bigcap_{v\in J_i}\sigma_{iv}Q_v,\qquad
 \sigma_{iv}=\alpha_{iv}^{-1},
\]
with empty intersection \([M]\).
Fix every \(q_v=|Q_v|\). If the column hypergraph has a beta order, then
for every \(F\in\CC_{|L|}\) its gained sum on
\((|E_i^\sigma|)_i\) is at most its identity sum.
The comparison also holds with one \(Q_\rho\) prescribed when the
order ends at \(\rho\).
\end{proposition}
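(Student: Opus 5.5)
Induction along the beta order, eliminating one column at a time. The invariant lives in the cone \(\CC_{|L|}\). Fix the sizes \(q_v\), and write the gained sum as
\[
 \Sigma^\sigma(F)=\sum_{(Q_v)}F\bigl(|E_1^\sigma|,\ldots,|E_{|L|}^\sigma|\bigr),
\]
with each \(Q_v\in\Om_{q_v}(M)\). The identity sum \(\Sigma^{\id}(F)\) has all \(\sigma_{iv}=\id\).

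Let \(\rho'\) be the first column of the order. Because \(\rho'\) is a nest point, the rows \(I=\{i:\rho'\in J_i\}\) have neighborhoods \(J_i\setminus\{\rho'\}\) totally ordered by inclusion. I condition on all \(Q_v\) with \(v\ne\rho'\). For \(i\in I\) put \(D_i=\bigcap_{v\in J_i\setminus\{\rho'\}}\sigma_{iv}Q_v\), so that
\[
 |E_i^\sigma|=|\sigma_{i\rho'}Q_{\rho'}\cap D_i|=|Q_{\rho'}\cap\sigma_{i\rho'}^{-1}D_i|.
\]
Write \(F\) as a sum of products. Summing over \(Q_{\rho'}\in\Om_{q_{\rho'}}(M)\) is then exactly the situation of Lemma~\ref{lem:compression}: the sets are \(\sigma_{i\rho'}^{-1}D_i\), of sizes \(d_i=|D_i|\), and the functions are the nondecreasing factors for \(i\in I\), with the factors for \(i\notin I\) held as constants. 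Hence the inner sum is at most \((T_{I,q_{\rho'}}F)(\mathbf d)\), where \(d_i=|D_i|\) for \(i\in I\) and \(d_i=|E_i^\sigma|\) otherwise. The set \(D_i\) is exactly the \(E_i^\sigma\) of the smaller hypergraph with \(\rho'\) deleted, and \(T_{I,q}F\in\CC_{|L|}\). So the gained sum for \((G,F)\) is at most the gained sum for \((G-\rho',T_{I,q_{\rho'}}F)\). By induction on the number of columns, this is at most the identity sum for \((G-\rho',T_{I,q_{\rho'}}F)\). The base case has no columns: every \(E_i=[M]\), and both sums equal \(F(M,\ldots,M)\).

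To close the loop I must show that the identity sum for \((G-\rho',TF)\) equals the identity sum for \((G,F)\). This is the step that uses the nest point. In the identity model the sets \(D_i=\bigcap_{v\in J_i\setminus\{\rho'\}}Q_v\) for \(i\in I\) are intersections over a nested family of index sets. They are therefore reverse-nested: a larger index set gives a smaller intersection. Lemma~\ref{lem:compression} gives equality for nested sets, so the inner sum over \(Q_{\rho'}\) equals \(T_{I,q}F\) evaluated at \(|D_i|\). This holds for every outer configuration, so the identities agree. The deletion of \(\rho'\) keeps a beta order, namely the remaining suffix, which still ends at \(\rho\) when prescribed. Lemma~\ref{lem:rootlast} supplies such an order.

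For the rooted version, \(Q_\rho\) is prescribed and \(\rho\) is last, so every elimination step sums over an unprescribed column and the argument above runs unchanged. At the terminal stage only \(\rho\) remains, and I have to compare the gained and identity expressions \(F'(|\sigma_{i\rho}Q_\rho|\text{ or }M)\). The relevant sizes are \(|\sigma_{i\rho}Q_\rho|=q_\rho\), which do not depend on the transport. Hence the terminal values agree, and the prescribed-root comparison follows.

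The main obstacle is the bookkeeping that the inequality step and the equality step use the same function \(T_{I,q}F\) and the same size arguments \(d_i\). The functions \(f_{t,i}\) are applied to \(|Q\cap[d_i]|\), which is defined only through sizes. Lemma~\ref{lem:compression} compares against prefixes of those same sizes, so this works, but I will check that the conditioning on the other columns is legitimate. This requires the nonnegativity of all held factors and the fact that \(T_{I,q}F\in\CC_{|L|}\) holds uniformly.
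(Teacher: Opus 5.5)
Your proposal is correct and follows essentially the same route as the paper. The paper also inducts on the number of summed columns, with the hypothesis stated for every cost in \(\CC_s\) and for arbitrary residual rows, including empty ones. It bounds the first column's sum by simultaneous compression to obtain \(T_{I,q}F\in\CC_s\), applies induction to the suffix order, and uses the reverse-nested identity intersections together with the equality clause of Lemma~\ref{lem:compression} to recover the identity sum exactly, with the same terminal comparison \(|\sigma_{i\rho}Q_\rho|=q_\rho\) in the rooted case.
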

\begin{proof}
We prove a statement for every residual family, rather than only for
the binomial cost of independent sets. Let \(W\) be the remaining
columns, let \(J_i\subseteq W\) be the residual neighborhoods, and fix
the ranks \(q_v\), all residual transports, and a beta order on \(W\).
If a root is retained, prescribe its set and require that this order
end at the root. Write
\[
 \Psi^\sigma_{\mathbf J}(F)=
 \sum_{(Q_v)}F\left(
 \left|\bigcap_{v\in J_1}\sigma_{1v}Q_v\right|,\ldots,
 \left|\bigcap_{v\in J_s}\sigma_{sv}Q_v\right|\right),
\]
where the sum omits the prescribed root. Our induction assertion is
\(\Psi^\sigma_{\mathbf J}(F)\le\Psi^\id_{\mathbf J}(F)\) for
\emph{every} \(F\in\CC_s\), including empty or repeated residual rows.
The induction parameter is the number of summed columns.
With none, each intersection is \([M]\), except that a row incident
with a retained root has intersection \(\sigma_{i\rho}Q_\rho\).
Its cardinality is \(q_\rho\), so the two values agree.

For the first column \(v\), hold the other sets fixed and put
\[
 D_i^\sigma=\bigcap_{w\in J_i\setminus\{v\}}\sigma_{iw}Q_w,
 \qquad I_v=\{i:v\in J_i\}.
\]
On \(I_v\) the cardinality is
\(|\sigma_{iv}^{-1}D_i^\sigma\cap Q_v|\).
Compression bounds the \(v\)-sum by
\((T_{I_v,q_v}F)((|D_i^\sigma|)_i)\).
Put \(F_v=T_{I_v,q_v}F\). The closure proved above gives
\(F_v\in\CC_s\), and the suffix is a beta order of the traced
neighborhoods \(J_i\setminus\{v\}\), still ending at the root.
The induction assertion therefore applies to this new cost and these
residual rows. In full-sum notation the two comparisons are
\[
 \Psi^\sigma_{\mathbf J}(F)
 \le\Psi^\sigma_{\mathbf J-v}(F_v)
 \le\Psi^\id_{\mathbf J-v}(F_v).
\]
At identity the \(D_i^\id\), \(i\in I_v\), are nested: incident
neighborhoods are nested, and intersection reverses containment.
The equality clause of compression therefore recovers the exact
ordinary \(v\)-sum at each residual configuration. Consequently
\(\Psi^\id_{\mathbf J-v}(F_v)=\Psi^\id_{\mathbf J}(F)\),
which completes the induction. This exact last equality is why the
compressed upper bound is the original ordinary count.
\end{proof}
The exact independent-set count is the sum with cost
\(\prod_i\binom{|E_i^\sigma|}{a_i}\), a cone generator.
Lemma~\ref{lem:rootlast} consequently proves chordal-bipartite coefficient
comparison and one-root comparison. Transposing the two sides retains
a left root. Empty and full ranks cause no exception;
\(T_{\varnothing,q}F=\binom MqF\).

\subsection{Saturation and tight-star contraction}
Assume now that the feasible active graph is nonempty and has no
isolated vertices, so all ranks are interior. Define
\[
 b_v=M-\max_{u\in N(v)}a_u,\quad
 \eta_v=\binom{b_v-1}{a_v-1}>0\quad(v\in R),\qquad
 \Lambda=\prod_{v\in R}\eta_v.
\]
Let \(\occ^+\) keep the left occupations and replace each right occupation
by \(b_v\). Thus \(1\le a_v\le b_v\le M-1\).

\begin{lemma}[Nonnegative saturation remainder]\label{lem:saturation}
There is an \(\varepsilon\ge0\) such that
\[
 D_{\widetilde G}(\occ)=\Lambda D_{\widetilde G}(\occ^+)+\varepsilon.
\]
\end{lemma}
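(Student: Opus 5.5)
The plan is to work in the right-vertex form of the count and split each right binomial factor into a \emph{saturated} part and a monotone remainder. Fix the left sets \(S_u\) (\(u\in L\)) and, for each \(v\in R\), put
\[
 E_v=\bigcap_{u\in N(v)}\bigl([M]\setminus\alpha_{uv}S_u\bigr),\qquad e_v=|E_v|.
\]
By Lemma~\ref{lem:counting}, \(S_v\) ranges over the \(a_v\)-subsets of \(E_v\). Hence
\[
 \Za_{\widetilde G}(\occ)=\sum_{(S_u)_{u\in L}}\prod_{v\in R}\binom{e_v}{a_v},
\]
and \(\Pa_G(\occ)\) is the same sum with identity maps. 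Since the union of the transported neighbour sets has at least \(\max_{u\in N(v)}a_u\) elements, we always have \(e_v\le b_v\). Therefore, on the profile \(\occ^+\), the factor \(\binom{e_v}{b_v}\) equals \(\ind{e_v\ge b_v}=\ind{e_v=b_v}\). This gives \(\Za_{\widetilde G}(\occ^+)=\sum_{(S_u)}\prod_v\ind{e_v\ge b_v}\), and likewise for \(\Pa\).

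Next, decompose each factor as \(\binom{e}{a_v}=\eta_v\ind{e\ge b_v}+g_v(e)\) with \(g_v(e)=\binom{e}{a_v}-\eta_v\ind{e\ge b_v}\), regarded as a function on \(0,\ldots,M\). We check that \(g_v\) is nonnegative and nondecreasing. Below \(b_v\) it is \(\binom e{a_v}\). At \(e=b_v\) Pascal's rule gives \(g_v(b_v)=\binom{b_v}{a_v}-\binom{b_v-1}{a_v-1}=\binom{b_v-1}{a_v}=g_v(b_v-1)\). Above \(b_v\) it is \(\binom e{a_v}\) minus a constant, hence increasing. Expanding the product over \(R\) yields
\[
 \prod_v\binom{e_v}{a_v}=\Lambda\prod_v\ind{e_v\ge b_v}+\Phi(\mathbf e).
\]
Here \(\Phi\) is a finite sum of products, each with at least one factor \(g_v\) and otherwise factors \(\eta_v\ind{\cdot\ge b_v}\). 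Every factor is nonnegative and nondecreasing, so \(\Phi\) lies in the product cone \(\CC_{|R|}\).

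Summing over left sets now gives \(\Za(\occ)=\Lambda\Za(\occ^+)+\Za_\Phi\) and \(\Pa(\occ)=\Lambda\Pa(\occ^+)+\Pa_\Phi\), where \(\Za_\Phi,\Pa_\Phi\) are the gained and identity sums of \(\Phi(\mathbf e)\). Subtracting, we set \(\varepsilon=\Pa_\Phi-\Za_\Phi\) and obtain the displayed identity. It remains to show \(\varepsilon\ge0\). For this we apply Proposition~\ref{prop:conecompare} with the two sides transposed: the right vertices are the rows, the left vertices are the columns, and the column sets \(Q_u=[M]\setminus S_u\) have fixed sizes \(M-a_u\). Each \(E_v\) is then exactly an intersection of transported column sets. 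The transposed beta order exists by Lemma~\ref{lem:rootlast}, since the active graph is chordal bipartite; no root is needed. The proposition then gives \(\Za_\Phi\le\Pa_\Phi\) for the cone cost \(\Phi\).

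I expect the main obstacle to be finding the right split. A naive choice such as \(\binom{e}{a_v}-\eta_v\ind{e=b_v}\) fails to be monotone once \(e\) is allowed up to \(M\), because the comparison's cost functions must be defined on all of \(0,\ldots,M\). The indicator \(\ind{e\ge b_v}\), together with the exact weight \(\eta_v=\binom{b_v-1}{a_v-1}\), is what makes the remainder continuous at \(b_v\) via Pascal, and this continuity is what places it in the cone. The transposition bookkeeping for Proposition~\ref{prop:conecompare}, namely \(\sigma_{vu}=\alpha_{uv}\) acting on complements, is then routine.
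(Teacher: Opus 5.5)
Your proposal is correct and follows the paper's proof. You use the same split of each right factor into \(\eta_v\ind{e\ge b_v}\) plus a remainder that is continuous at \(b_v\) by Pascal's rule, expand over \(R\), and apply the transposed comparison of Proposition~\ref{prop:conecompare} to the resulting \(\CC_{|R|}\) costs. The only difference is cosmetic: the paper caps the cost as \(\binom{\min(d,b_v)}{a_v}\), while you keep the uncapped binomial, and your remainder \(\binom{e}{a_v}-\eta_v\) is still nonnegative and nondecreasing above \(b_v\), so it lies in the cone just as well.
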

\begin{proof}
At fixed selected left sets, the available set at right vertex \(v\)
is
\[
 A_v^{\mathrm{avail}}=
 \bigcap_{u\in N(v)}\alpha_{uv}([M]\setminus S_u),
 \qquad d_v=|A_v^{\mathrm{avail}}|\le b_v.
\]
On the entire domain \(0,\ldots,M\), define
\[
 f_v(d)=\binom{\min(d,b_v)}{a_v},\quad
 g_v(d)=\ind{d\ge b_v},\quad h_v=f_v-\eta_v g_v.
\]
All three functions are nonnegative and nondecreasing. At the only
possible problematic jump, Pascal's identity gives
\[
 h_v(b_v)=\binom{b_v-1}{a_v}=h_v(b_v-1);
\]
above it \(h_v\) is constant. For \(d\le b_v\), \(f_v\) is the original
cost and \(g_v(d)=\binom d{b_v}\) is the saturated cost.
Expanding \(\prod_v(h_v+\eta_vg_v)\) gives
\[
 D_{\widetilde G}(\occ)=
 \Lambda D_{\widetilde G}(\occ^+)+
 \sum_{\mathcal U\subsetneq R}\left(\prod_{v\in\mathcal U}\eta_v\right)\Delta_{\mathcal U}.
\]
To justify every remainder, use the \emph{transposed} bipartition:
rows are now the original right vertices and columns are the original
left vertices. Their fixed column sets are \([M]\setminus S_u\),
of ranks \(M-a_u\); their row intersections, with maps
\(\alpha_{uv}\), are exactly \(A_v^{\mathrm{avail}}\).
For each \(\mathcal U\subsetneq R\) the cost on these row cardinalities is
\[
 F_{\mathcal U}(\mathbf d)=\prod_{v\in\mathcal U}g_v(d_v)
                \prod_{v\in R\setminus\mathcal U}h_v(d_v)\in\CC_{|R|}.
\]
Thus \(\Delta_{\mathcal U}\), its ordinary-minus-gained sum, is nonnegative by
Proposition~\ref{prop:conecompare} and the transposed root-last order.
These costs are generally not binomial occupation costs; the whole-cone
statement of that proposition is essential here.
The nonnegative sum of these remainders is \(\varepsilon\).
The cap in \(f_v\) ensures cone membership on the full domain
\(0,\ldots,M\), even though actual available sizes satisfy \(d_v\le b_v\).
\end{proof}

\begin{lemma}[Tight-star contraction]\label{lem:star}
Choose a left vertex \(u\) of maximal occupation \(a_*\).
At \(\occ^+\), gauge its star edges to identity, delete \(u\), and identify
all vertices of \(N(u)\) to one right vertex \(z\) of occupation \(M-a_*\),
retaining all labelled edges and maps. This gives a multigraph \(H\)
with fewer vertices, whose underlying simple graph \(\overline H\) is chordal
bipartite. The assignment correspondence is a bijection in both models.
The full labelled system on \(H\) is flat if and only if that on \(G\)
is flat.
\end{lemma}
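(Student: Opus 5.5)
The plan is to exploit tightness at the saturated profile. First I would fix \(u\) with \(a_u=a_*\) maximal. For every \(v\in N(u)\), the definition of \(b_v\) gives \(b_v=M-\max_{w\in N(v)}a_w=M-a_*\), since \(u\in N(v)\) attains the maximum. In the counting model of Lemma~\ref{lem:counting} we have \(|S_u|=a_*\) and \(|S_v|=M-a_*\), with \(S_v\cap\alpha_{uv}S_u=\varnothing\). This forces \(S_v=[M]\setminus\alpha_{uv}S_u\) exactly. After gauging the star edges of \(u\) to identity (Lemma~\ref{lem:flat}: relabeling bijects families in both models), every \(v\in N(u)\) carries the same set \([M]\setminus S_u\).

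The bijection goes as follows. Send an assignment on \(G\) to the assignment on \(H\) that gives \(z\) the common set \(S_z=[M]\setminus S_u\) and keeps all other sets. Every labelled edge \(vx\) with \(v\in N(u)\), \(x\ne u\), is retained as an edge \(zx\) with the same map, so its constraint is unchanged. Conversely, from an assignment on \(H\) I would put \(S_u=[M]\setminus S_z\) and \(S_v=S_z\) for \(v\in N(u)\). The star constraints then hold automatically, and the retained edges reproduce the remaining constraints. The two maps are mutually inverse, and since the argument never uses that the maps are nonidentity, it works verbatim in the ordinary model. Parallel labelled edges arise when some \(x\) has several neighbours in \(N(u)\), which is why \(H\) is a multigraph. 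The vertex count drops by \(|N(u)|\ge1\): we delete \(u\) and merge \(|N(u)|\) vertices into one.

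For flatness, each closed walk in \(H\) lifts to a closed walk in \(G\). Replace each passage \(x\,z\,y\), entering by an edge from \(v_1\) and leaving by an edge from \(v_2\), with \(x\,v_1\,u\,v_2\,y\), or with \(x\,v_1\,y\) when \(v_1=v_2\). The inserted star edges are identity, so the holonomy is the same. Conversely, a closed walk in \(G\) projects to \(H\) by deleting \(u\) and renaming \(N(u)\) to \(z\). Star steps become trivial and retained edges keep their maps, so the holonomy is again preserved. Both systems are therefore flat together.

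I expect the main obstacle to be showing that \(\overline H\) is chordal bipartite. It is bipartite, with \(z\) on the right. Suppose \(C\) were an induced cycle of length at least six in \(\overline H\). If \(C\) avoids \(z\), it is an induced cycle of \(G\), since only \(u\) and \(N(u)\) were altered; that is a contradiction. Otherwise write \(C=z\,x\,\cdots\,y\,z\). Since \(C\) is induced, no other left vertex of \(C\) is adjacent to \(z\), hence none is adjacent to any vertex of \(N(u)\). The right vertices of \(C\) other than \(z\) lie outside \(N(u)\), so they are nonadjacent to \(u\) and to the left vertices \(x,y\) only along \(C\). If some \(v\in N(u)\) is adjacent to both \(x\) and \(y\), replacing \(z\) by \(v\) gives an induced cycle of the same length in \(G\). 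If not, choose \(v_1\sim x\) and \(v_2\sim y\) in \(N(u)\), necessarily distinct. Then \(x\,v_1\,u\,v_2\,y\) followed by the rest of \(C\) is a cycle of length \(|C|+2\). Checking chords (\(v_1\not\sim y\), \(v_2\not\sim x\), \(u\) has no other cycle neighbours, \(v_1\not\sim v_2\)) shows it is induced in \(G\), again a contradiction. The careful part I would write out is this chord check.
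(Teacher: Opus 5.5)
Your proposal is correct and follows the paper's proof. It uses the same forced-complement bijection at the saturated neighbours of \(u\), and the same two-case check for chordal bipartiteness: substitute a common neighbour in \(N(u)\) for \(z\), or detour through \(x\,v_1\,u\,v_2\,y\). The one difference is in the flatness step. You lift and project closed walks, whereas the paper lets a flattening gauge, which must agree on the identity star, descend to \(z\). The two are equivalent because the spanning-tree argument of Lemma~\ref{lem:flat} carries over to multigraphs, with parallel labelled edges treated as chords.
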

\begin{proof}
Every neighbor of \(u\) has saturated occupation \(M-a_*\), so its
selected set equals \([M]\setminus S_u\).
This common set becomes \(S_z\) and determines the removed \(S_u\).
All other constraints are retained, giving a bijection without a
binomial multiplier. The vertex number decreases by
\(|N(u)|\ge1\); the new profile is feasible because all other left
occupations are at most \(a_*\).

Suppose \(\overline H\) had an induced cycle of length at least six.
If it avoids \(z\), it persists in \(G\).
Otherwise let \(x,y\) be its left neighbors of \(z\).
Its remaining \(x\)-\(y\) path has right interior outside \(N(u)\),
and no interior left vertex is adjacent to any old member of \(N(u)\),
as that would be a chord to \(z\).
If \(x,y\) share a neighbor \(w\in N(u)\), replace \(z\) by \(w\)
to obtain an induced cycle in \(G\).
Otherwise choose respective neighbors \(w_x,w_y\in N(u)\) and replace
\(x-z-y\) by \(x-w_x-u-w_y-y\).
This longer cycle is induced: the old path has no chord, \(u\) has
no right neighbor on its interior, neither new right vertex has an
internal left neighbor, and cross-endpoint edges would give a common
neighbor. Both alternatives contradict chordal bipartiteness.

For a multigraph, flatness requires a gauge making \emph{every}
labelled edge identity, including all parallel edges.
In the identity-star gauge, any flattening gauge agrees at \(u\) and
all its neighbors and descends through their identification.
Conversely a flat gauge on \(H\) extends using that common gauge on
the whole old star. This proves equivalence.
\end{proof}

\begin{lemma}[Rejected parallel constraint]\label{lem:parallel}
Retain one edge from each parallel class of \(H\), giving a cover \(Y\)
of \(\overline H\). For the inherited profile,
\[
 P_G(\occ^+)=P_{\overline H},\quad Z_{\widetilde G}(\occ^+)=Z_H\le Z_Y,\qquad
 D_{\widetilde G}(\occ^+)=D_Y+(Z_Y-Z_H).
\]
If \(Y\) is flat and \(H\) is not, then \(Z_Y-Z_H\ge1\).
\end{lemma}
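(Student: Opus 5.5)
By Lemma~\ref{lem:star}, the assignments of \(\widetilde G\) at \(\occ^+\) correspond bijectively to those of \(H\), in both the ordinary and the gained model. So the plan is to read off all four identities from this bijection and then isolate a single rejected assignment for the final inequality.

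\emph{Ordinary count.} In the ordinary model every map, including each parallel copy, is the identity. Parallel identity edges impose the same disjointness constraint twice, and the indicator of a constraint is idempotent. Hence the ordinary count on \(H\) equals the ordinary count on \(\overline H\), which gives \(P_G(\occ^+)=P_{\overline H}\).

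\emph{Gained counts.} The bijection gives \(Z_{\widetilde G}(\occ^+)=Z_H\). The system \(Y\) keeps one labelled edge from each parallel class. Every assignment of \(H\) satisfies all constraints of \(H\), so in particular those of \(Y\); thus \(Z_H\le Z_Y\). Subtracting, and using \(P_Y=P_{\overline H}\) since \(Y\) is a cover of \(\overline H\),
\[
 D_{\widetilde G}(\occ^+)=P_{\overline H}-Z_H=(P_{\overline H}-Z_Y)+(Z_Y-Z_H)=D_Y+(Z_Y-Z_H).
\]

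\emph{The strict inequality.} Suppose \(Y\) is flat and \(H\) is not. I would gauge \(Y\) to the identity; this is a relabeling, so it is a bijection for both \(H\) and \(Y\). Since \(H\) is nonflat, some parallel edge \(xz\) now carries a map \(\gamma\ne1\), while the retained edge of its class carries the identity. The task is to exhibit one \(Y\)-assignment that violates \(\gamma\). First I would take a nested assignment of \(Y\), as in Lemma~\ref{lem:counting}: prefix sets \(S_x\) on the left and suffix sets on the right. Relabeling by a common permutation \(\pi\) keeps it a \(Y\)-assignment; choose \(\pi\) so that \(S_x\cap\gamma^{-1}S_z\ne\varnothing\). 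This is possible because \(S_z\) is the complement of the left set forced through the saturated tight star, so \(S_x\subseteq[M]\setminus S_z\) with \(|S_x|\ge1\) and \(1\le|S_z|\le M-1\). Equivalently, I need \(\gamma\) not to map \(S_x\) into \([M]\setminus S_z\). Since \(\gamma\) moves some label \(i\) to \(j\ne i\), I would use transitivity to place \(i\in S_x\) and \(j\in S_z\). Because the ranks are interior, this positioning is compatible with the nested structure. The resulting assignment lies in \(Y\) but not in \(H\), so \(Z_Y-Z_H\ge1\).

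\emph{Main obstacle.} The hard step is guaranteeing that this rejected assignment is consistent with all the other constraints, which may force relations between \(S_x\) and \(S_z\) through other paths. Flatness of \(Y\) helps: in the identity gauge every \(Y\)-constraint is plain disjointness, and the nested family above satisfies all of them. A simultaneous permutation preserves this property, and the only freedom I need is placing \(i\in S_x\) and \(j\in S_z\). If the nested shape forces \(S_x\) and \(S_z\) into fixed relative positions, this is still fine, since \(i\in S_x\) and \(j\in[M]\setminus S_x\supseteq S_z\) can be arranged by choosing \(\pi\) appropriately.
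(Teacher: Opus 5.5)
Your proposal is correct and is essentially the paper's proof. The identities come from the star bijection, together with the fact that ordinary parallel constraints are duplicates while $Y$ keeps only a subset of the gained ones. The strict inequality comes from the nested ordinary assignment (left prefixes, right complementary prefixes) in the identity gauge of $Y$, relabeled so that a label $i$ moved by the omitted map $\gamma$ lies in the left set and $j=\gamma(i)$ lies in the right set.

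The paper realizes your common permutation $\pi$ by ordering the labels as $A\ni i$, then $Q\setminus A$, then the rest, with $j\notin Q$. All that is needed is that the two nested sets are disjoint and nonempty, which follows from feasibility and interior ranks; your appeal to the set forced through the tight star at $u$ is unnecessary, since $u$ is no longer in $H$.
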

\begin{proof}
Ordinary parallel constraints are duplicates; gained ones may disagree.
This proves the identities and inequality.
Gauge \(Y\) to identity. Some omitted edge from left \(x\) to right
\(y\) has map \(\tau\ne1\). Put \(p=a_x\), \(q=M-a_y\), so
\(1\le p\le q\le M-1\).
Choose \(i\) moved to \(j\ne i\), a \(q\)-set \(Q\) containing \(i\)
but not \(j\), and a \(p\)-set \(A\subseteq Q\) containing \(i\).
Then \(A\subseteq Q\), but \(\tau A\not\subseteq Q\).
Order labels with \(A\) first, then \(Q\setminus A\), then the rest.
All left prefixes and right complementary prefixes in that order
give a globally feasible ordinary assignment, with selected sets
\(A,[M]\setminus Q\) at \(x,y\).
It violates the omitted constraint, proving strict rejection.
\end{proof}

\begin{proof}[Proof of rigidity in Theorem~\ref{thm:A}]
Comparison follows from Proposition~\ref{prop:conecompare}.
Flatness implies equality by Lemma~\ref{lem:flat}.
For the converse induct on the number of positive active vertices,
removing isolated vertices and their common positive factors first.
An empty remaining graph is flat; at \(M=1\) this is the only case.
In any other instance the ranks are interior and there is an edge.

If \(D_{\widetilde G}(\occ)=0\), Lemma~\ref{lem:saturation} gives
\(D_{\widetilde G}(\occ^+)=0\).
Saturation does not reduce the induction measure; contraction in
Lemma~\ref{lem:star} reduces it by \(|N(u)|\ge1\).
The representative \(\overline H\) remains chordal bipartite.
Any new isolated vertices are removed before induction, including
\(z\) if \(N(N(u))=\{u\}\); their factors are positive.
Induction gives \(D_Y\ge0\), with equality exactly for a flat \(Y\).
Lemma~\ref{lem:parallel} has two nonnegative terms whose sum is zero,
so \(Y\) is flat and \(Z_Y-Z_H=0\).
Its strict-rejection assertion forces \(H\) flat, and
Lemma~\ref{lem:star} extends flatness back to \(G\).
Combining the exact identities proves \eqref{eq:three}.
\end{proof}
This argument establishes equality rigidity. It does not supply
\(2\Ra_G(\occ)\) on the original chordal-bipartite graph: retaining only
one saturated branch may discard part of that reserve.

\section{Fibre comparison and composition}\label{sec:composition}

Gluing inequalities for DP counts have precedents. If \(H\) is formed
from \(H_1,\ldots,H_s\) by identifying one actual edge of each graph,
with otherwise disjoint interiors, Kaul, Maxfield, Mudrock, and Thomason
prove, for \(M\ge2\),
\[
 P_{DP}(H,M)\le
 \frac{\prod_{j=1}^s P_{DP}(H_j,M)}{\{M(M-1)\}^{s-1}}
\]
\cite[Theorem~4]{GluingDP}.
Here \(P_{DP}\) is the minimum over covers, and the selections are
singletons. Our comparison is uniform for each fixed cover and each
legal boundary state at arbitrary occupations. This fibre bound retains
the equality condition, prevents dilution of relative deficits, and
propagates a designated piece's bound to the original reduced reserve.

\begin{example}[A nonadjacent boundary]\label{ex:nonedge}
On the path \(u-v-w\), take \(M=2\) and occupation one everywhere.
Fix \(S_u=\{1\}\), \(S_w=\{2\}\). With identity transports the
middle vertex has no choice; if \(uv\) has identity transport and
\(wv\) has map \((12)\), then \(S_v=\{2\}\) is its unique choice.
The ordinary completion count is instead one when the two endpoint
pins coincide. Thus ordinary counts at a nonadjacent pair are not a
single boundary-independent constant, and the actual-edge fibre
comparison cannot be extended to arbitrary two-vertex boundaries.
\end{example}

\subsection{Amplification at an actual edge}
An edge is an \emph{actual edge} of the graph throughout this section,
not a pair of nonadjacent terminals. Normalize its transport to identity.
At fixed occupations, its legal raw boundary pairs are
\[
 \begin{gathered}
 \mathcal B=\{(A,B):|A|=a_u,\ |B|=a_v,\ A\cap B=\varnothing\},\\
 \beta=|\mathcal B|=\binom M{a_u}\binom{M-a_u}{a_v}.
 \end{gathered}
\]
For legal ranks \(\beta>0\). Simultaneous permutations are transitive
on \(\mathcal B\), so the ordinary completion count is one constant
\(c=\Pa_G/\beta\). Denote a gained completion count by \(K(s)\).

At a root vertex \(x\), a pin is a set \(S_x\) of size \(a_x\); its
ordinary fibre constant is \(\Pa_G/\binom M{a_x}\). A \emph{gained
fibre} counts completions of prescribed sets in the transported cover;
an \emph{ordinary fibre} counts them in the identity cover. Legal edge
pins before normalization satisfy \(\alpha_{uv}A\cap B=\varnothing\).
All ranks lie in \(0,\ldots,M\); for an edge the additional legality
condition is \(a_u+a_v\le M\). Fibre counts and constants are allowed
to be zero when interior occupations are infeasible.

\begin{mainthm}[Composition]\label{thm:C}
Let \(M\ge1\), and let \(H\) be a finite seam composition, as defined in
Section~\ref{sec:intro-main}, with an \(M\)-cover.
\begin{enumerate}
\renewcommand{\labelenumi}{(\roman{enumi})}
\item For ranks \(0\le a_v\le M\) and legal pins, every gained fibre at
a root or at any actual edge is bounded by
the corresponding ordinary fibre constant. Hence the coefficientwise
comparison holds on \(H\).
\item At every feasible profile, scalar equality holds exactly when the
active cover is flat. Moreover, writing
\(\delta_J=\Da_J/\Pa_J\) for the inherited cover and profile on a constructed
piece \(J\), one has \(\delta_H\ge\delta_J\) for every atom and intermediate
piece \(J\).
\item Suppose \(M\ge2\), the whole profile is positive and feasible, and a
designated atom occurrence \(W\) satisfies
\(\Da_W\ge\lambda\Ra_W\), where \(\lambda>0\).
Then \(\Da_H\ge\lambda\Ra_H\).
In particular a connected nonflat width-two witness supplies
\(\lambda=\kappa(\Gamma_W)\), even if the other atoms have arbitrary holonomy.
\end{enumerate}
In (iii), \(\Ra_H\) may be zero at an isolated vertex with occupation \(M\);
it is positive if every occupation is at most \(M-1\).
\end{mainthm}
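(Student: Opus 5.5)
The plan is to prove (i) first on atoms by an amplification argument, then to push fibre bounds through seams by induction on the construction tree, and finally to read (ii) and (iii) off one seam identity.

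\emph{Atom fibre bounds by amplification.} Let \(W\) be an atom, \(e=uv\) an actual edge with transport normalized to the identity, and \(K(s)\) the gained completion count at \(s\in\mathcal B\). For \(N\ge1\), glue \(N\) copies of \(W\), each with the same cover, along \(e\). The resulting graph \(W^{(N)}\) stays in the same atom class. For treewidth, take the decompositions of the copies and join them through a bag containing \(\{u,v\}\). For chordal bipartiteness, an induced cycle of length at least six meeting two copies must pass through both \(u\) and \(v\), and then the edge \(uv\) would be a chord. Theorem~\ref{thm:A} applied to \(W^{(N)}\) gives
\[
 \sum_{s\in\mathcal B}K(s)^N\le\beta c^N
 \qquad\text{for every }N\ge1 .
\]
Taking \(N\)-th roots and letting \(N\to\infty\) yields \(\max_sK(s)\le c\). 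The vertex-pin version is identical: glue \(N\) copies at \(x\) and compare with \(\binom M{a_x}\) times the ordinary fibre constant raised to the \(N\)-th power. Infeasible interior ranks are harmless, because both sides then vanish.

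\emph{Propagation of fibres through seams.} Suppose \(H=H_1\cup H_2\) with seam an edge \(e\), whose ordinary fibre constants are \(c_1,c_2\). Every completion count factors as
\[
 K_H(s)=K_1(s)K_2(s)\le c_1c_2 .
\]
For a pin \(t\) at another actual edge, or at a root, lying in \(H_1\), I would write
\[
 K_H(t)=\sum_sK_1(t,s)K_2(s)\le c_2\sum_sK_1(t,s)=c_2K_1(t).
\]
By the inductive bound this is at most \(c_2\) times the ordinary fibre constant of \(H_1\) at \(t\), which is exactly the ordinary fibre constant of \(H\) at \(t\). Vertex seams work the same way, and empty seams factor as products. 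Summing over all pins gives the coefficient comparison. This proves (i).

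\emph{Seam identity, relative deficits and rigidity.} At the seam I would use the identity
\[
 c_1c_2-K_1K_2=c_2(c_1-K_1)+K_1(c_2-K_2).
\]
Both terms are nonnegative by (i), so \(D_H\ge c_2D_{H_1}\). Dividing by \(P_H=\beta c_1c_2=c_2P_{H_1}\) gives \(\delta_H\ge\delta_{H_1}\). The symmetric decomposition gives \(\delta_H\ge\delta_{H_2}\), and iterating along the construction tree covers every intermediate piece and every atom. For rigidity, \(D_H=0\) forces \(D_{H_1}=D_{H_2}=0\), since feasibility gives \(c_i>0\). By induction, with Theorem~\ref{thm:A} for atoms, both active pieces are then flat. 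Gauge each piece to the identity so that the two gauges agree on the seam; this is possible because the seam is a vertex or an edge, which is a tree. The union is then flat. The only care needed is that the active graph of \(H\) is the seam composition of the active pieces, possibly degenerating to a disjoint union when a seam vertex is inactive, which is handled by Lemma~\ref{lem:flat}. The converse direction is Lemma~\ref{lem:flat}.

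\emph{Reserve propagation, and the expected obstacle.} For (iii), glue the pieces one at a time onto the designated occurrence \(W\). In the reduced model (on \(M-2\) labels, with every occupation lowered by one) the ordinary count factors as \(R_{H}=c_2^-R_W\), where \(c_2^-\) is the reduced ordinary fibre constant of the other piece. Then
\[
 D_H\ge c_2D_W\ge\lambda c_2R_W\ge\lambda c_2^-R_W=\lambda R_H ,
\]
provided \(c_2\ge c_2^-\). This is the ordinary insertion of Lemma~\ref{lem:insertion}, which I would redo for an edge pin: choose label \(i\) in the left pin set and \(j\) in the right one, and check that the injection of reduced completions preserves the reduced edge pin. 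I expect the main obstacle to be this edge-rooted insertion together with checking that the reduced seam profile stays legal, i.e.\ \(\beta^->0\) exactly when the reserve is nonzero. The positivity remark in (iii) then follows from reduced feasibility via Lemma~\ref{lem:counting}. The closure of each atom class under self-gluing in the amplification step is the other point requiring a careful check. The width-two witness is finally supplied by Theorem~\ref{thm:B} with \(\lambda=\kappa(\Gamma_W)\).
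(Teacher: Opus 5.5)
Your proposal is correct and follows the paper's proof essentially step for step: copy amplification along an actual edge for atom fibres, conditional fibre propagation through edge, vertex, and empty seams, the seam identity \(D_{12}=c_2D_1+\sum_sK_1(s)(c_2-K_2(s))\) for rigidity and relative deficits, and the insertion \(c_2\ge c_2^-\) for witness propagation (your raw choice \(i\in S_x\), \(j\in S_y\) is exactly the paper's inclusion-coordinate choice \(i\in A\), \(j\notin B\)). The only deviation is at atom root fibres: you amplify by gluing copies at a vertex, whereas the paper sums the edge bound over the \(\binom{M-a_x}{a_y}\) legal partner sets and uses scalar comparison for an isolated root; both routes work, and yours needs only the easy extra check that each atom class is closed under gluing copies at a cut vertex.
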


\begin{lemma}[Copy amplification]\label{lem:amplify}
Suppose a graph class has scalar comparison for every cover and profile
and is closed under gluing arbitrarily many copies of any member along
one actual edge, with disjoint interiors. Then \(K(s)\le c\) at every
legal boundary pair. Both atom classes of Theorem~\ref{thm:C}
satisfy the closure hypothesis.
\end{lemma}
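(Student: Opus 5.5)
The plan is a tensor-power (amplification) argument. Fix a member \(G\) of the class, an \(M\)-cover, an actual edge \(uv\) normalized to identity transport, a profile, and the gained completion function \(K(s)\) on \(\mathcal B\). For each \(N\ge1\), glue \(N\) copies of \(G\), with the same cover and profile on each, along the edge \(uv\), so that the interiors are disjoint. Call the result \(G^{(N)}\). Since the seam edge carries the identity in every copy, the transports agree on the intersection. A gained assignment on \(G^{(N)}\) is a legal boundary pair \(s\in\mathcal B\) together with an independent completion in each copy. Therefore
\[
 \Za_{G^{(N)}}=\sum_{s\in\mathcal B}K(s)^N,\qquad
 \Pa_{G^{(N)}}=\sum_{s\in\mathcal B}c^N=\beta c^N,
\]
where the ordinary identity uses transitivity of simultaneous permutations on \(\mathcal B\). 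The hypothesis of scalar comparison on \(G^{(N)}\) gives \(\sum_s K(s)^N\le\beta c^N\) for every \(N\). If some \(K(s_0)>c\), then \(K(s_0)^N\le\beta c^N\). Taking \(N\)-th roots and letting \(N\to\infty\) yields \(K(s_0)\le c\), a contradiction. The case \(c=0\) is immediate: we get \(K(s_0)^N\le0\), so \(K(s_0)=0\). The same counting also covers legal pins that have not been normalized, because the identity gauge is applied only along the seam edge.

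For the closure claim, I treat the two atom classes separately. \emph{Treewidth at most two:} take tree decompositions of the copies and choose a bag containing \(\{u,v\}\) in each. Attach all these decompositions to one new bag \(\{u,v\}\). This gives a valid decomposition, since \(u\) and \(v\) occur in connected subtrees and every edge is covered, and the width is at most \(\max(2,1)=2\). Bipartiteness is preserved because the bipartitions agree on the edge. \emph{Chordal bipartite:} suppose \(G^{(N)}\) has an induced cycle \(C\) of length at least six. If \(C\) lies within one copy, we are done. Otherwise \(C\) must pass through the separator \(\{u,v\}\) at least twice, so it contains both \(u\) and \(v\). Since \(uv\) is an edge, it is then a chord of \(C\) unless \(uv\in C\). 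But a cycle that contains the edge \(uv\) and leaves a copy must re-enter through \(\{u,v\}\), which is impossible. Hence \(C\) lies within one copy, which is chordal bipartite.

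The main subtlety I expect is making the separator argument precise: a cycle meeting two copies uses \(u\) and \(v\) as cut points, and this forces either the chord \(uv\) or the use of edge \(uv\) itself. This is the step to check carefully, together with the point that the scalar comparison applied to \(G^{(N)}\) is Theorem~\ref{thm:A} applied to the active graph. The active graph of \(G^{(N)}\) is the glued active graphs, which lie in the same class by the identical argument, since induced subgraphs of glued graphs are glued induced subgraphs.
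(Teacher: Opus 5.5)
Your proposal is correct and follows the paper's proof essentially step for step. The \(N\)-fold gluing gives \(\sum_s K(s)^N\le\beta c^N\), \(N\)-th roots finish, tree decompositions are joined at edge-covering bags, and an induced long cycle meeting two copies must contain both seam endpoints, so it either has \(uv\) as a chord or closes up inside one copy. The only difference is in the active-graph step: the paper simply notes that the active graph is an induced subgraph of the glued graph and that both atom classes are hereditary, which handles profiles vanishing at a seam endpoint without redoing the gluing argument.
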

\begin{proof}
Glue \(k\) identical copies along the normalized edge. Its gained and
ordinary counts are \(\sum_sK(s)^k\) and \(\beta c^k\).
Comparison gives \(K(s)^k\le\beta c^k\).
If \(c=0\), use \(k=1\); otherwise take \(k\)-th roots and let
\(k\to\infty\), using \(\beta^{1/k}\to1\).

For width-two graphs join edge-covering bags of the individual
decompositions by a tree. Bags do not grow, and each common endpoint
has a connected occurrence set, proving closure for every \(k\).
For chordal-bipartite graphs, a cycle crossing copies must pass through
both shared endpoints. If it avoids the shared edge, that edge is a
chord. If it uses that edge, the other endpoint-to-endpoint path lies
in one copy. Hence every induced cycle of length at least six would
already occur within an atom, which is impossible.
Finally, the active graph of every profile on the repeated copies is
an induced subgraph of their union. Treewidth bounds and absence of long
induced cycles are both preserved by induced subgraphs.
Theorem~\ref{thm:A} therefore supplies scalar comparison for every such
profile, including profiles whose active graph omits a seam endpoint.
\end{proof}
Before normalization the legal pins satisfy
\(\alpha_{uv}A\cap B=\varnothing\).
The reference \(c\) is the coordinate-independent ordinary constant,
not an ordinary completion count at a possibly illegal unnormalized pair.

\subsection{Closure and equality}
Say a piece has the \emph{fibre property} if it has scalar comparison
and the ordinary upper bound at every actual edge and every root.

\begin{lemma}[Fibre closure]\label{lem:fibreclosure}
Both atom classes have the fibre property, and that property is preserved
by every seam in a finite seam composition.
\end{lemma}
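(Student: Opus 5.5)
Two things have to be shown: that the atoms have the fibre property, and that each seam preserves it.

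\emph{Atoms.} Scalar comparison for an atom is Theorem~\ref{thm:A}. The edge-fibre bound \(K(s)\le c\) is Lemma~\ref{lem:amplify}. For a root vertex \(x\) there are two cases. If \(x\) has a neighbour \(y\), then summing edge fibres over the legal pins at \(y\) compatible with \(S_x\) gives at most \(\binom{M-a_x}{a_y}\,c\). This equals the ordinary root constant \(\Pa_G/\binom M{a_x}\), because \(\beta=\binom M{a_x}\binom{M-a_x}{a_y}\). If \(x\) is isolated, its fibre is exactly the rest of the count, and scalar comparison on \(G-x\) finishes. The cases \(a_y=0\) and a zero constant are covered in the same way. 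Instead of amplification one could also use the root-retaining form of Proposition~\ref{prop:conecompare} directly.

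\emph{Seams.} Let \(H=H_1\cup H_2\), with \(H_1\cap H_2\) equal to an edge \(e\), a vertex \(x\), or empty. Normalize \(e\) to identity, or gauge so that the pin at \(x\) is the reference one. For the empty seam, counts and fibres are products, so the property is immediate. For an edge seam we have
\[
 \Za_H=\sum_{s\in\mathcal B}K_1(s)K_2(s),\qquad
 \Pa_H=\beta c_1c_2.
\]
Each \(K_i(s)\le c_i\), so scalar comparison follows.

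The fibre bounds on \(H\) split by where the pin sits.
\begin{itemize}
\item A pin on \(e\) itself gives \(K_1K_2\le c_1c_2\).
\item A pin on an actual edge \(f\) or a root inside \(H_1\) needs a joint fibre: a count with both \(f\) and \(e\) pinned in \(H_1\). Summing that joint fibre against \(K_2(s)\le c_2\) bounds the result by \(c_2\) times the \(H_1\)-fibre at \(f\), which in turn is at most \(c_2c_1^{f}\). One checks that \(c_2c_1^{f}\) equals the ordinary constant of \(H\), using transitivity of simultaneous permutations on legal pairs.
\end{itemize}
The vertex seam is handled identically, with \(\binom M{a_x}\) in place of \(\beta\).

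The step I expect to be hardest is exactly this factorization: bounding \(H_1\)'s gained count with \(f\) pinned by \(c_2\) times its \(f\)-fibre requires only \(K_2\le c_2\) and nonnegativity, so that part is fine. The real obstacle is that legality of pins under non-identity transports must be tracked. The ordinary constant must be taken at a legal normalized pair, as in the remark after Lemma~\ref{lem:amplify}, and pins in \(H_1\) that are illegal for \(e\) contribute zero.

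Finally, the property passes along the finite construction tree by induction. At each step the pieces are intermediate compositions that already have the property, and the new graph is again a union across a single seam.
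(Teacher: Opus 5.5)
Your proposal is correct and follows essentially the same route as the paper: Lemma~\ref{lem:amplify} gives the atom edge fibres, root fibres come from summing edge bounds over the $\binom{M-a_x}{a_y}$ partner pins (with scalar comparison for an isolated root), and at each seam you bound the other piece's completions by its ordinary constant $c_2$ before summing, then induct along the construction tree. Your aside about replacing amplification by the root-retaining form of Proposition~\ref{prop:conecompare} would only give root fibres of chordal-bipartite atoms, not edge fibres and not width-two atoms, but nothing else in your argument depends on it.
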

\begin{proof}
Lemma~\ref{lem:amplify} gives each atom's edge fibres.
Fixing a selected root set \(S_x\) and summing the edge bound over its
\(\binom{M-a_x}{a_y}\) legal partner sets gives the ordinary root
constant \(\Pa/\binom M{a_x}\).
For an isolated root use scalar comparison on the remaining graph.

Let \(H_1,H_2\) share exactly an edge \(xy\), with consistent data,
and retain any desired edge \(e\), say in \(H_1\).
For each gained \(H_1\) assignment with the \(e\) pins fixed, the number
of \(H_2\) completions is at most its ordinary \(xy\)-fibre constant
\(c_2\). Thus
\[
 K_{H_1\cup H_2}(e\text{ pins})
 \le c_2K_{H_1}(e\text{ pins})\le c_2c_1(e).
\]
In the ordinary model each legal \(xy\) pair has exactly \(c_2\)
completions, so \(c_2c_1(e)\) is the union's ordinary fibre constant.
The shared edge indicator is \(0\) or \(1\); including its constraint
in each conditional count introduces no additional weight.
Summing fibres gives scalar comparison, and the root argument just
given supplies every root bound.
The same proof works at a one-vertex seam, using its root bound on the
second piece. Disjoint unions multiply, with the scalar bound on the
outside piece for any retained pins. All arguments include \(c_2=0\)
without division.
Finite induction on the construction proves closure.
\end{proof}

\begin{proposition}[Equality and seam identity]\label{prop:seamequality}
At a feasible actual-edge or one-vertex seam, let \(\beta\) be the
legal boundary cardinality, \(c_i>0\) the ordinary constants,
and \(K_i(s)\) the gained fibres. Then
\begin{equation}\label{eq:seam}
 D_{12}=\beta c_1c_2-\sum_sK_1(s)K_2(s)
       =c_2D_1+\sum_sK_1(s)(c_2-K_2(s)).
\end{equation}
Consequently \(D_{12}\ge c_2D_1\) and \(D_{12}\ge c_1D_2\).
Equality for the union holds if and only if both pieces have scalar
equality. The same equality equivalence holds for an empty seam.
On a finite seam composition it is equivalent to active flatness.
\end{proposition}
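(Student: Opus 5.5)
The identity \eqref{eq:seam} is bookkeeping, and the equality statements follow by positivity from the fibre property. I will use Lemma~\ref{lem:fibreclosure} and Lemma~\ref{lem:amplify}.

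\emph{Step 1: the identity.} At an actual-edge seam I first normalize the shared transport to the identity. At a vertex seam I use root pins. Conditioning on the boundary state \(s\in\mathcal B\) (or on a root set), the ordinary count of the union is \(\beta c_1c_2\), because each legal state has \(c_1c_2\) ordinary completions. The gained count is \(\sum_sK_1(s)K_2(s)\), because the two interiors interact only through \(s\) and the seam constraint is counted once. Since \(D_1=\beta c_1-\sum_sK_1(s)\), adding and subtracting \(c_2\sum_sK_1(s)\) gives
\[
 D_{12}=c_2D_1+\sum_sK_1(s)\bigl(c_2-K_2(s)\bigr).
\]
By the fibre property of the second piece, \(K_2(s)\le c_2\) for every legal \(s\), and \(K_1(s)\ge0\). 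Hence the sum is nonnegative and \(D_{12}\ge c_2D_1\). Exchanging the roles of the pieces gives \(D_{12}\ge c_1D_2\).

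\emph{Step 2: equality.} Suppose \(D_{12}=0\). Then \(c_2D_1\le0\). Since \(c_2>0\) and \(D_1\ge0\) by scalar comparison, we get \(D_1=0\), and symmetrically \(D_2=0\). Conversely, suppose \(D_1=D_2=0\). Then \(\sum_sK_2(s)=\beta c_2\) with every \(K_2(s)\le c_2\), so \(K_2(s)=c_2\) for all \(s\). The remainder term therefore vanishes, and \(D_{12}=c_2D_1=0\).

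For an empty seam, \(P_{12}=P_1P_2\) and \(Z_{12}=Z_1Z_2\) with \(0\le Z_i\le P_i\) and \(P_i>0\). So \(Z_{12}=P_{12}\) holds exactly when \(Z_i=P_i\) for both \(i\).

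\emph{Step 3: active flatness.} I induct on the construction tree, working with active graphs. Zero-occupation vertices are removed, and the active graph of a seam composition decomposes along the induced seams, each of which is an actual edge, a single vertex, or empty. The subtle point is that a seam endpoint may be inactive. An edge seam with one inactive endpoint becomes a vertex seam, and one with no active endpoints becomes an empty seam. The feasibility assumption guarantees \(c_i>0\) after this reduction.

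At the atoms, equality is equivalent to active flatness by Theorem~\ref{thm:A}. For the inductive step I need that the union's active cover is flat exactly when both pieces' active covers are flat. One direction is immediate, because a subgraph of a flat cover is flat. For the other, flat gauges on the two pieces can be matched on the seam. At a vertex seam, I relabel one piece by a global permutation so that the two gauges agree at the shared vertex. At an edge seam, the shared edge is already identity in both gauges after I align them at one endpoint, so they agree at the other endpoint as well. At an empty seam there is nothing to match.

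I expect the main obstacle to be this bookkeeping of inactive seam endpoints, where I must check that the reduced seam is still of an admissible type with positive constants. I would handle it by applying the identity directly to the active pieces.
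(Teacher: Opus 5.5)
Your proposal is correct and follows the paper's proof step for step: you condition on the common pins to get \eqref{eq:seam}, and you use the fibre bound \(K_2(s)\le c_2\) from Lemma~\ref{lem:fibreclosure} for both lower bounds and for the equality equivalence. You then treat an empty seam by the product formula and induct on the construction, aligning flat gauges at one endpoint and propagating the alignment across the shared edge. The one difference is cosmetic: you recast a seam with an inactive endpoint as a vertex or empty seam between the active pieces. The paper instead keeps the identity on the original pieces and deals with inactive endpoints only in the gauge-alignment step.
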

\begin{proof}
The first identity counts assignments conditional on their common pins.
The second follows by writing \(D_1=\beta c_1-\sum_sK_1(s)\).
Every remainder is nonnegative by Lemma~\ref{lem:fibreclosure};
the reversed identity gives the other lower bound.
Feasibility makes \(\Pa_i,c_i\) positive by
Lemma~\ref{lem:counting}. Thus union equality forces both \(D_i=0\).
Conversely \(D_i=0\), together with \(0\le K_i(s)\le c_i\), forces
\(K_i(s)=c_i\) at every pair, and gives union equality.
For an empty seam use the product of two positive ordinary counts.

Each atom has equality exactly for active flatness by Theorem~\ref{thm:A}.
Flat gauges on an active shared edge can be aligned at one endpoint
by right-composing one component's gauges with a common permutation.
The shared edge equation then aligns them at the other endpoint.
If zero occupations remove one endpoint, only one gauge must be aligned;
if both vanish, active pieces are disjoint.
A vertex seam requires the same single alignment.
Restriction proves the reverse flatness implication.
Induction on the construction establishes the assertion.
\end{proof}

\begin{corollary}[Relative deficit cannot decrease]\label{cor:relative}
For \(M\ge1\) and a feasible profile on a finite seam composition,
\(\delta_J=D_J/P_J\) is defined and lies in \([0,1]\).
At an edge or vertex seam,
\[
 \delta_{12}\ge\max\{\delta_1,\delta_2\}.
\]
At an empty seam,
\[
 \delta_{12}=\delta_1+\delta_2-\delta_1\delta_2
 \ge\max\{\delta_1,\delta_2\}.
\]
The full relative deficit is at least that of every atom and intermediate
constructed piece. Zero occupations and isolated full ranks are allowed.
\end{corollary}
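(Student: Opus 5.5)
The plan is to read everything off the seam identity \eqref{eq:seam} together with the product formula for disjoint unions, and then induct along the finite construction tree.

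First, well-definedness. At a feasible profile Lemma~\ref{lem:counting} gives \(\Pa_J>0\) for every constructed piece \(J\), since the inherited profile is still feasible on \(J\). The comparison in Theorem~\ref{thm:C}(i), already available through Lemma~\ref{lem:fibreclosure}, gives \(0\le\Za_J\le\Pa_J\). Hence \(\delta_J=D_J/P_J\) is defined and lies in \([0,1]\). Zero occupations and isolated full ranks need no separate treatment, because Lemma~\ref{lem:counting} and the fibre statements allow them.

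At an actual-edge or one-vertex seam:
\begin{itemize}
\item Proposition~\ref{prop:seamequality} gives \(D_{12}\ge c_2D_1\).
\item In the ordinary model the count conditions exactly on the legal boundary states, so \(P_{12}=\beta c_1c_2\) and \(P_1=\beta c_1\).
\item Dividing gives \(\delta_{12}\ge c_2D_1/(\beta c_1c_2)=D_1/P_1=\delta_1\).
\end{itemize}
The reversed form \(D_{12}\ge c_1D_2\) gives \(\delta_{12}\ge\delta_2\) in the same way. The positivity \(c_i>0\) needed to divide again comes from feasibility. The one thing to check is that a vertex seam fits this template with \(\beta=\binom M{a_x}\) and the root constants; this was already asserted in that proposition.

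At an empty seam, counts multiply: \(P_{12}=P_1P_2\) and \(Z_{12}=Z_1Z_2\). Therefore
\[
1-\delta_{12}=\frac{Z_1Z_2}{P_1P_2}=(1-\delta_1)(1-\delta_2),
\]
which expands to \(\delta_{12}=\delta_1+\delta_2-\delta_1\delta_2\). Since \(\delta_i\in[0,1]\), we get \(\delta_{12}-\delta_1=\delta_2(1-\delta_1)\ge0\), and symmetrically \(\delta_{12}\ge\delta_2\).

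For the final assertion, induct on the construction tree from the leaves to the root. Every atom or intermediate piece \(J\) lies on a chain of seams \(J=J_0\subset J_1\subset\cdots\subset J_t=H\), where each \(J_{i+1}\) is formed by a seam joining \(J_i\) to another constructed piece. Each step does not decrease \(\delta\), so chaining gives \(\delta_H\ge\delta_J\). The inherited profile on every intermediate piece is feasible, because feasibility is defined edge by edge, so every step uses the seam inequalities legitimately.

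I expect no real obstacle in this argument. The only care needed is to confirm that the ordinary constants in \eqref{eq:seam} are consistent, namely \(P_{12}=\beta c_1c_2\) and \(P_i=\beta c_i\). That holds because simultaneous permutations are transitive on legal boundary pairs.
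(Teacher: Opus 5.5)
Your proposal is correct and is essentially the paper's own proof. Positivity of \(P_J\) and scalar comparison give \(\delta_J\in[0,1]\); the ordinary factorization \(P_{12}=\beta c_1c_2=c_2P_1=c_1P_2\) turns the seam-identity bounds \(D_{12}\ge c_2D_1\) and \(D_{12}\ge c_1D_2\) into \(\delta_{12}\ge\max\{\delta_1,\delta_2\}\); multiplicativity of \(Z/P\) handles the empty seam; and the one-step inequalities are chained up the construction tree.
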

\begin{proof}
For a nonempty seam, ordinary factorization gives
\(P_{12}=c_2P_1=c_1P_2>0\).
Divide the two bounds following \eqref{eq:seam} by \(P_{12}\).
For an empty seam \(Z_{12}/P_{12}=(Z_1/P_1)(Z_2/P_2)\),
which gives the formula and inequality since \(0\le\delta_i\le1\).
Apply each one-step conclusion up the finite construction tree.
\end{proof}
For an infeasible profile \(P=Z=0\), this relative quantity is undefined.

\subsection{Propagation to the original reduced coefficient}

\begin{proposition}[Witness propagation]\label{prop:witness}
Under the hypotheses of Theorem~\ref{thm:C}(iii),
\(D_H\ge\lambda R_H\). Disconnected partners and isolated vertices
are permitted.
\end{proposition}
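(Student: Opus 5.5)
Induct up the finite construction tree from the designated atom occurrence \(W\), and keep the bound \(D_J\ge\lambda R_J\) for every intermediate piece \(J\) containing \(W\). When \(J=W\) this is the hypothesis. So it suffices to check one seam step: if \(H_1\supseteq W\) satisfies \(D_1\ge\lambda R_1\) and \(H_2\) is an arbitrary finite seam composition glued along an actual edge, a vertex, or nothing, then \(D_{12}\ge\lambda R_{12}\).

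For a nonempty seam we use \eqref{eq:seam}. Proposition~\ref{prop:seamequality} gives \(D_{12}\ge c_2D_1\ge\lambda c_2R_1\), where \(c_2\) is the ordinary fibre constant of \(H_2\) at the seam for the original profile on \(M\) labels. Next we factor the reduced reserve. It is an ordinary count on \(M-2\) labels with every occupation lowered by one, and the reduced profile is feasible because \((a_u-1)+(a_v-1)\le M-2\). So the same ordinary seam factorization applies and gives \(R_{12}=c_2^-R_1\), where \(c_2^-\) is the reduced ordinary fibre constant of \(H_2\) at the reduced seam. If the reduced count vanishes we have \(R_{12}=0\), and the bound \(D_{12}\ge0\) from Theorem~\ref{thm:C}(i) suffices. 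It then remains to show \(c_2\ge c_2^-\). This is the rooted form of ordinary insertion, Lemma~\ref{lem:insertion}, applied to the identity cover of \(H_2\) with an edge or vertex prescribed. Here is how the injection works. Add a fixed label \(i\) to every reduced inclusion set, which in raw coordinates means label \(i\) is selected on the left. Omit a second label \(j\) everywhere. Choose \(i\) and \(j\) compatibly with one prescribed legal boundary pin. All occupations are positive, so every rank is interior and the construction is legal. Each reduced completion then injects into an ordinary completion of the original problem. For an empty seam the counts multiply, so \(D_{12}=D_1P_2+Z_1D_2\ge D_1P_2\) and \(R_{12}=R_1R_2\). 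Here the needed inequality \(P_2\ge R_2\) is the unrooted form of Lemma~\ref{lem:insertion}.

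The main obstacle is that Lemma~\ref{lem:insertion} is stated for overlap networks with interior ranks and factors \(f_e\). In \(H_2\) there may be vertices of occupation \(M\), which are necessarily isolated, and the seam pin must be matched exactly. I would first handle these directly. An isolated vertex of full occupation \(M\) contributes \(1\) to \(P\) and \(0\) to \(R\), since \(\binom{M-2}{M-1}=0\); this is consistent with the remark in Theorem~\ref{thm:C}. An isolated vertex with \(1\le a\le M-1\) contributes \(\binom Ma\ge\binom{M-2}{a-1}\). Zero occupations cannot occur, since the profile is positive. For the edge seam I would verify the rooted injection explicitly with inclusion coordinates \eqref{eq:inclusion}. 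Take \(i\in A_x\cap A_y\) and \(j\notin A_y\), which is possible because the pins are legal and the ranks are interior. The factor \(\binom{z+1}{r}\ge\binom z{r-1}\) shows that constraints are preserved. Once this is in place, composing the one-step inequalities along the construction tree gives \(D_H\ge\lambda R_H\). The particular case \(\lambda=\kappa(\Gamma_W)\) then follows from Theorem~\ref{thm:B} applied to the positive feasible restricted profile on \(W\).
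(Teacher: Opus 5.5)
Your proposal is correct and follows the paper's proof essentially step for step. The shared steps are: induction along the construction tree; the seam identity giving \(D_{12}\ge c_2D_1\); the factorization \(R_{12}=c_2'R_1\) of the reduced reserve; the rooted insertion injection (\(i\) inside the smaller inclusion pin, \(j\) outside the larger) giving \(c_2\ge c_2'\); and \(D_{12}\ge P_2D_1\), \(P_2\ge R_2\) at an empty seam.

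One point is only presentational. Positivity alone does not make every rank interior, since an isolated occupation \(M\) is allowed. So it is cleaner to dispose of the case \(R_{12}=0\) first, as the paper does. After that, reduced feasibility gives \(1\le a_v\le M-1\) at every vertex, and your injection applies verbatim.
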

\begin{proof}
We first prove one step, with \(H_1\) containing the designated witness
and already satisfying \(D_1\ge\lambda R_1\).
The other child \(H_2\) has the fibre property.
If \(R_{12}=0\), scalar comparison finishes the step.
Otherwise reduced feasibility gives
\(1\le a_v\le M-1\) at \emph{every} vertex, including isolated ones.
All original inclusion ranks are therefore interior.

At an edge seam let \(\beta'>0\) be the reduced boundary cardinality
and \(c_i'\) the reduced ordinary fibre constants. Then
\[
 R_1=\beta'c_1',\qquad R_{12}=\beta'c_1'c_2'.
\]
We claim \(c_2\ge c_2'\).
Prescribe original inclusion pins \(A\subseteq B\) with
\(1\le|A|\le|B|\le M-1\).
Choose \(i\in A\), \(j\notin B\), and prescribe
\(A\setminus\{i\}\subseteq B\setminus\{i\}\) on the remaining labels.
Adding \(i\) to every reduced inclusion set, and omitting \(j\), injects
reduced ordinary completions into original ordinary completions with
the desired pins. It works as well in disconnected components and
at interior isolated slices. Slice transitivity identifies the counts
with \(c_2'\) and \(c_2\).
Equation~\eqref{eq:seam} now gives
\[
 D_{12}\ge c_2D_1\ge\lambda c_2R_1
              \ge\lambda R_{12}.
\]

At a vertex seam choose \(i\) inside its prescribed inclusion set and
\(j\) outside, and use the same injection.
Here \(\beta'=\binom{M-2}{a_{\rm root}-1}>0\); the same factorization
and seam identity apply.
An isolated seam vertex of occupation \(M\) has zero reserve and was
handled before any division by a reduced boundary cardinality.
At an empty seam unpinned ordinary insertion gives \(P_2\ge R_2\);
hence
\[
 D_{12}=P_2D_1+Z_1D_2\ge P_2D_1
       \ge\lambda R_1R_2=\lambda R_{12}.
\]
An isolated full occupation anywhere makes its reduced factor zero,
so also belongs to the first case.

Finally use the assumed bound at the designated witness leaf.
At each ancestor in the construction tree exactly one child contains
that leaf; apply its inductive bound and the other child's fibre
property in the one-step argument.
The finite path to the root proves the result.
Reduced feasibility also gives \(R_H>0\) when all original occupations
are at most \(M-1\).
\end{proof}

\begin{proof}[Proof of Theorem~\ref{thm:C}]
Part (i) is Lemma~\ref{lem:fibreclosure}.
Part (ii) follows from Proposition~\ref{prop:seamequality} and
Corollary~\ref{cor:relative}. Part (iii) is
Proposition~\ref{prop:witness}, with Theorem~\ref{thm:B} supplying
the indicated width-two witness constant.
\end{proof}

\begin{example}[A mixed family beyond the atom classes]\label{ex:mixed}
Glue \(K_{n,n}\), \(n\ge3\), and \(C_6\) along one actual edge.
The complete bipartite atom is chordal bipartite: every cycle of length
at least six has at least three vertices on each side and a chord.
For \(C_6=(v_0,\ldots,v_5)\), the bags
\(\{v_0,v_i,v_{i+1}\}\), \(i=1,\ldots,4\), in a path give width two.
The union retains an induced \(C_6\) and hence is not chordal bipartite.
Its treewidth is \(n\). For the lower bound its \(K_{n,n}\) subgraph
has minimum degree \(n\), so Lemma~\ref{lem:lowdegree} rules out width
below \(n\). For the upper bound use bags consisting of all \(n\)
left vertices and one right vertex for \(K_{n,n}\), in a path,
and attach the cycle decomposition at edge-covering bags.
No bag grows. Thus the union is outside both atom classes but satisfies
Theorem~\ref{thm:C}.
\end{example}

\section{Questions left by the quantitative comparison}\label{sec:questions}

The general coefficientwise conjecture asks for comparison beyond the
composition class above. Equality of the \emph{whole polynomial} is already
settled by Proposition~\ref{prop:first}, but it does not classify equality
at an arbitrary fixed profile.
A further question is whether the original-reserve bound
\(D_{\widetilde G}(\occ)\ge2R_G(\occ)\) holds on every connected
chordal-bipartite base with a positive feasible profile.
The nonnegative decomposition \eqref{eq:three} proves strictness there;
paying the full original reserve would require quantitative control of
all its terms, not just one saturated branch.
The witness theorem avoids that unresolved estimate by retaining an
independently quantified width-two piece.

Can one elimination principle encompass both proofs? The serial
method keeps nondecreasing overlap kernels between two surviving slices;
nest-column compression keeps nondecreasing costs of several row
intersections. A common invariant would have to preserve this broader
cost information as well as the obstruction to flatness.

It is also natural to ask about unions along two nonadjacent vertices
or a larger boundary. Example~\ref{ex:nonedge} rules out the
boundary-independent ordinary fibre constant used here, but does not
rule out scalar coefficient comparison for such unions.
An extension would require a comparison that retains the dependence
on the boundary state before the final sum.

\appendix
\section{Counting-normalized slice calculus}\label{app:spectral}

This appendix proves Proposition~\ref{prop:scalars} and the decomposition
used in \eqref{eq:localweights}. Harmonic analysis on Boolean slices is
classical; see Filmus \cite[Theorem~3.1]{Filmus} for an
orthogonal basis and Ghareghani, Ghorbani, and Mohammad-Noori
\cite[Lemma~8]{Ghareghani} for the inclusion-matrix spectrum.
We rederive the needed formulas to fix the normalization.
Its purpose is to obtain common \emph{nonincreasing} nonnegative scalars;
that monotonicity makes the summation-by-parts weights nonnegative.
All inner products below are counting sums, not probability averages.

\subsection{Up and down operators}
The space \(V_k=\mathbb R^{\Om_k(M)}\) uses the inner product
\(\langle f,g\rangle=\sum_A f(A)g(A)\). Define
\[
 (U_kf)(B)=\sum_{\substack{A\subset B\\|A|=k}}f(A),\quad
 \partial_{k+1}=U_k^*,\quad \mathcal H_i=\ker \partial_i,\quad \partial_0=0.
\]
Interpret the missing \(U_{-1}\partial_0\) as zero.
Off-diagonal one-element exchanges appear once in both products;
the diagonal appears \(M-k\) and \(k\) times. Hence
\begin{equation}\label{eq:commutator}
 \partial_{k+1}U_k-U_{k-1}\partial_k=(M-2k)I.
\end{equation}
For \(h\in \mathcal H_i\), induction using this identity gives
\[
 \partial_{i+j}U_{i+j-1}\cdots U_i h
 =j(M-2i-j+1)U_{i+j-2}\cdots U_i h,
\]
where the chain on the right is the identity when \(j=1\).
Taking inner products recursively yields
\[
 \|U_{i+j-1}\cdots U_i h\|^2
 =(j!)^2\binom{M-2i}{j}\|h\|^2.
\]
Write
\[
 E_{k,i}=\frac1{(k-i)!}U_{k-1}\cdots U_i,\qquad
 E_{k,i}h(A)=\sum_{\substack{S\subseteq A\\|S|=i}}h(S).
\]
For \(i\le k\le M-i\), this has squared norm multiplier
\(\binom{M-2i}{k-i}\). Thus
\(J_{k,i}=\binom{M-2i}{k-i}^{-1/2}E_{k,i}\)
is an isometry.

For \(k\le M/2\), induction shows \(U_{k-1}\) is injective on
every preceding harmonic block, because its norm multiplier is positive.
Hence \(\partial_k\) is onto and
\(V_k=\operatorname{im}U_{k-1}\oplus\ker \partial_k\).
Different lifted harmonic blocks are orthogonal: move up operators to
the other side as down operators until a down operator kills the
higher-degree harmonic vector.
For \(k>M/2\), the surviving blocks \(i\le M-k\) remain orthogonal
and their dimensions telescope to \(\binom M{M-k}=\dim V_k\).
Consequently
\begin{equation}\label{eq:harmonic}
 \begin{gathered}
 V_k=\bigoplus_{i=0}^{\min(k,M-k)}J_{k,i}\mathcal H_i,\\
 d_i:=\dim \mathcal H_i=\binom Mi-\binom M{i-1}
 \quad(0\le i\le\lfloor M/2\rfloor).
 \end{gathered}
\end{equation}
The formula uses \(\binom M{-1}=0\).
Every operator commutes with label permutations. Therefore a lifted
block carries the same representation as \(\mathcal H_i\); no multiplicity-one
representation theorem is being imported.

\subsection{Common decreasing scalars}
Let \(W_{k,d}(A,S)=\ind{S\subseteq A}\), as a map \(V_d\to V_k\).
Counting intermediate \(d\)-sets gives
\[
 W_{k,d}E_{d,i}=\binom{k-i}{d-i}E_{k,i}.
\]
For \(0\le d\le k\le M\) and \(0\le i\le\min(d,M-k)\),
normalizing this identity explicitly yields
\[
 W_{k,d}J_{d,i}
 =\binom{k-i}{d-i}
  \sqrt{\frac{\binom{M-2i}{k-i}}{\binom{M-2i}{d-i}}}\,J_{k,i}
 =\sqrt{A_i(M;k,d)}\,J_{k,i}.
\]
Thus the block action follows by direct counting and normalization,
without a multiplicity-one representation premise.
On normalized blocks its squared scalar is
\begin{equation}\label{eq:Ai}
 A_i(M;k,d)=
 \binom{k-i}{d-i}\binom{M-d-i}{k-d}.
\end{equation}
Indeed the unsimplified value is
\(\binom{k-i}{d-i}^2
 \binom{M-2i}{k-i}/\binom{M-2i}{d-i}\);
factorial cancellation gives \eqref{eq:Ai}.
Outside the surviving block range the scalar is zero.
In that range, \eqref{eq:Ai} is the inclusion-spectrum formula attributed
to Wilson in \cite[Lemma~8]{Ghareghani}: its parameters
\((v,i,j)\) correspond to \((M,d,i)\) here.

First assume \(p+q\le M\).
For the binomial overlap matrix
\(B_d(A,B)=\binom{|A\cap B|}{d}\),
\[
 \begin{aligned}
 B_d&=W_{p,d}W_{q,d}^*
 =\sum_i\beta_i(d;p,q)J_{p,i}J_{q,i}^*,\\
 \beta_i(d;p,q)&=\sqrt{A_i(M;p,d)A_i(M;q,d)}\ge0.
 \end{aligned}
\]
Here \(d\le\min(p,q)\le M/2\), so all blocks of \(V_d\) are covered
by \eqref{eq:harmonic}.
For consecutive nonzero terms,
\[
 \frac{A_{i+1}(M;k,d)}{A_i(M;k,d)}
 =\frac{(d-i)(M-k-i)}{(k-i)(M-d-i)}\le1;
\]
the denominator minus the numerator is \((k-d)(M-2i)\ge0\).
Vanishing blocks are final zeros.
Every \(f=\sum_dc_d\binom zd\) consequently has the same intertwiners
and nonnegative nonincreasing scalars
\[
 K_f=\sum_i k_i(f)J_{p,i}J_{q,i}^*,\qquad
 k_i(f)=\sum_dc_d\beta_i(d;p,q).
\]
For \(p+q>M\), complement both endpoint sets.
The shifted overlap becomes the unshifted overlap on complementary
ranks whose sum is at most \(M\).
Complementation is an orthogonal, permutation-equivariant bijection,
so the common-intertwiner and decreasing-scalar conclusions still hold.

\subsection{Characters and cumulative moved-subset counts}
Let \(\chi_i(\gamma)\) be the trace of \(\gamma\) on \(\mathcal H_i\).
The two Hilbert--Schmidt products in the definition of \(S_\gamma\)
are the traces of \(K_fK_g^*\) and of this operator followed by the
endpoint permutation. Equivariance and \eqref{eq:harmonic} therefore give
\[
 S_\gamma=\sum_{i=1}^r\mu_i(d_i-\chi_i(\gamma)),\qquad
 \mu_i=k_i(f)k_i(g),\quad r=r_{pq}.
\]
The degree-zero term vanishes.
The inverse action convention has the same character, since a real
orthogonal matrix and its inverse have the same trace.

A permutation matrix on \(V_k\) has trace equal to the number of fixed
\(k\)-subsets. For \(k\le M/2\), \eqref{eq:harmonic} hence implies
\[
 m_k(\gamma)=\sum_{i=1}^k(d_i-\chi_i(\gamma)).
\]
With \(\mu_{r+1}=0\), summation by parts proves
\[
 S_\gamma=\sum_{k=1}^r(\mu_k-\mu_{k+1})m_k(\gamma),
\]
which is \eqref{eq:localweights}. Both scalar sequences are nonnegative
and nonincreasing, so every weight is nonnegative.
Only these cumulative moved-subset counts are compared.
No termwise lower bound on individual harmonic character deficits
is required.

\section{A self-contained root-last structural order}\label{app:structure}

The standard structural assertions in this appendix are included to
make Proposition~\ref{prop:conecompare} self-contained.
The two-nest-point argument follows Brault-Baron's two-beta-leaf proof
\cite[Theorem~7 and its proof]{BraultBaron}, with the explicit
vertex-set convention below. Choosing a nonroot nest point at each step
then gives the root-last refinement.
The underlying nest-point theorem goes back to Brouwer and Kolen
\cite{BrouwerKolen}; the hypergraph and incidence-graph characterizations
are classical \cite{Fagin,Ausiello}. We include this consequence for the
rooted induction, without asserting a new structural result.
A hypergraph has an explicit finite vertex set; its nonempty edges
may form an empty family. Its incidence graph joins a column vertex
to an edge row exactly when the vertex belongs to that edge.
Duplicate rows can be discarded. Adding singleton edges for every
vertex neither creates an induced incidence cycle nor changes the
nesting condition at a vertex.

\subsection{Simplicial vertices of chordal graphs}
A graph is chordal if it has no induced cycle of length at least four.
A vertex is simplicial if its neighbors form a clique.
We prove by vertex induction that a noncomplete chordal graph with at
least two vertices has two nonadjacent simplicial vertices; every vertex
of a complete graph is simplicial.
For a disconnected graph choose a simplicial vertex in each of two
components by induction.
For a connected noncomplete graph choose nonadjacent \(x,y\), an
inclusion-minimal \(x\)-\(y\) separator \(S\), and their components
\(C_x,C_y\) after deletion of \(S\).
Every \(s\in S\) has a neighbor in both components: a path after deleting
only \(S\setminus\{s\}\) must run through \(s\) between them.
If \(s,t\in S\) were nonadjacent, take shortest \(s\)-\(t\) paths with
interiors in \(C_x\) and \(C_y\).
They are induced, have length at least two, and their interiors have
no cross-edges. Their union is an induced cycle of length at least four.
Thus \(S\) is a clique.

The induced graphs on \(C_x\cup S\), \(C_y\cup S\) are smaller.
If one is complete choose a vertex of its component.
If it is not, induction gives two nonadjacent simplicial vertices,
at most one in the clique \(S\); choose the other.
It has no neighbor outside its component and \(S\), so is simplicial
in the whole graph. The two chosen vertices are nonadjacent.
This closes the induction.

\subsection{An alpha leaf from incidence cycle freedom}
Suppose a hypergraph has a nonempty vertex set and its incidence graph
has no induced cycle of length at least six.
Retain the singleton edges just described.
Its primal graph joins vertices lying in a common hyperedge.
This primal graph is chordal: an induced primal cycle of length
\(k\ge4\) has a hyperedge witnessing each consecutive pair.
A witnessing edge contains no other cycle vertex, which would create
a primal chord. The witnessing edges are distinct, and with the \(k\)
columns they form an induced incidence cycle of length \(2k\),
a contradiction.

Every primal clique is contained in a hyperedge.
Otherwise take a smallest uncovered clique \(K\).
Singletons and adjacent pairs are covered, so \(|K|\ge3\).
For each \(z\in K\) some edge \(e_z\) contains \(K\setminus\{z\}\)
and avoids \(z\). On any three vertices \(x,y,z\), the three rows
\(e_x,e_y,e_z\) form an induced incidence \(C_6\), a contradiction.

A simplicial primal vertex \(v\) now has its closed neighborhood
contained in a hyperedge \(e_v\). Since that edge contains \(v\),
all its vertices lie in the closed neighborhood, giving equality.
Every edge incident to \(v\) lies inside \(e_v\).
Thus \(v\) has a largest incident edge, an \emph{alpha leaf}.

\subsection{Two nest points}
Following the full-edge removal and alpha-leaf induction in
\cite[proof of Theorem~7]{BraultBaron}, we prove the stronger
assertion: a hypergraph with at least two vertices
and no induced long incidence cycle has two nest points nonadjacent
after its possible full edge is removed.
Keep singletons, collapse duplicates, and induct on
\(|V|+\sum_e|e|\).
If \(V\) itself is an edge, remove it and apply induction on the same
vertex set. Reinserting the full edge preserves each incident chain,
and the two points are nonadjacent after that full edge is removed.

Assume no full edge. With two vertices only singleton edges remain.
For \(|V|\ge3\) choose an alpha leaf \(x\) with largest edge \(e_x\subsetneq V\).
Delete \(x\), take traces, and discard empty and duplicate edges.
Incidence cycle freedom persists, since this is column deletion followed
by harmless row removals. By induction the smaller hypergraph has
two nest points \(y,z\), nonadjacent after its full edge is removed.
The edge \(e_x\setminus\{x\}\) is proper in \(V\setminus\{x\}\),
so cannot contain both; if it is empty, both are outside.
Choose \(y\notin e_x\setminus\{x\}\).
No old edge contains \(x,y\); every edge incident to \(y\) is unchanged
by deleting \(x\). Hence \(y\) is a nest point of the original hypergraph.

This argument gives, from any alpha leaf, a nest point outside its
largest edge. A nest point is itself an alpha leaf.
Apply the same deletion argument to \(y\), using induction on the
smaller hypergraph, to obtain a second nest point nonadjacent to \(y\).
This proves the stronger assertion and the induction.

\subsection{Root-last orders}
For a chordal-bipartite \(G=(L,R,E)\), its row neighborhoods form the
hypergraph on explicit vertex set \(R\), with incidence graph \(G\)
after harmless empty-row handling.
If at least two columns remain, the two-nest-point result provides
a nest point different from a prescribed root \(\rho\).
Delete it and repeat; isolated columns are vacuous nest points.
This produces a beta order ending at \(\rho\), or an unrooted order
when no root is specified. Empty vertex sets need no leaf.

Conversely, an induced incidence cycle of length \(2k\ge6\) has
\(k\) columns and \(k\) distinct rows, each row meeting those columns
in its consecutive pair. The first such column in a beta order has
two incident rows with incomparable suffix neighborhoods: their
other cycle columns have not yet been removed.
This contradicts nesting. Thus a beta order excludes all such cycles.
Transposing the bipartition proves the other-side assertion, and
vertex or row deletion preserves the cycle exclusion.
This completes the proof of Lemma~\ref{lem:rootlast}.

\end{document}